\documentclass{scrartcl}

\usepackage[a4paper,margin=2.5cm]{geometry}

\usepackage[utf8]{inputenc}
\usepackage[T1]{fontenc}

\usepackage{graphicx}

\usepackage{amsmath,amssymb,amsthm}
\usepackage{enumitem}

\usepackage[dvipsnames,svgnames]{xcolor} 
\usepackage{caption}
\usepackage{subcaption}

\usepackage{cancel} 

\usepackage[style=alphabetic,maxalphanames=4,maxnames=4]{biblatex}
\AtEveryBibitem{\clearfield{issn}}

\newtheorem{thm}{Theorem}
\newtheorem{prop}[thm]{Proposition}
\newtheorem{lem}[thm]{Lemma}
\newtheorem{cor}[thm]{Corollary}
\theoremstyle{definition}
\newtheorem{Def}[thm]{Definition}

\theoremstyle{remark}
\newtheorem{rem}[thm]{Remark}

\newcommand{\dpart}[2]{\frac{\partial #1}{\partial #2}}

\newcommand{\Z}{\mathbb{Z}}
\newcommand{\Q}{\mathbb{Q}}
\newcommand{\K}{\mathbb{K}}
\newcommand{\R}{\mathcal{R}}

\usepackage[
    colorlinks=true,
    citecolor=NavyBlue,
    linkcolor=NavyBlue,   
    urlcolor=blue
]{hyperref}

\title{Enumeration of plane hypermaps with a mixed boundary I}
\author{J\'er\'emie Bouttier%
  \thanks{Sorbonne Université and Université Paris Cité, CNRS,
    IMJ-PRG, F-75005 Paris, France} \and%
  Bertrand Eynard%
  \thanks{Université Paris-Saclay, CNRS, CEA, Institut de physique
    théorique, 91191, Gif-sur-Yvette, France} \and Thomas
  Lejeune\footnotemark[1]
}
\date{\today}

\begin{document}
\maketitle

\vspace{-0.9cm}
\begin{abstract}

Plane hypermaps are plane maps endowed with a proper coloration
 of their inner faces in black or white. We consider the problem of enumerating 
plane hypermaps with prescribed face degrees and a $k$-alternating boundary condition: 
by this we mean the colors of inner faces incident to the outer face 
alternates at most $2k$ times when turning around the hypermap. 
 The present paper deals with the cases $k=1,2$, the general case being left 
 to the forthcoming part II.
Our approach relies on the so-called slice decomposition 
and uses crucially the notion of \emph{accessibility}, 
which exploits the canonical orientation of hypermaps and the marking variable~$t$ associated with 
 vertices, to enumerate pointed hypermaps by decomposing them according to the set 
 of all vertices that can access to the marked vertex.  
This process enables us to express the generating functions of hypermaps with mixed 
boundaries in terms of the generating functions of hypermap slices and to recover, in 
a purely combinatorial way, some formulas previously obtained through 
algebraic methods.

\end{abstract}

\vspace{-0.5cm}
\tableofcontents

\section{Introduction}

\subsection{Context and motivations}

The study of maps, i.e.\ graphs drawn on surfaces, is a vibrant topic
that connects combinatorics, probability theory, and theoretical
physics, to name a few. See for
instance~\cite{Lando2004,Schaeffer2015,Eynard2016,Curien2023} and
references therein. In this paper, we pursue the combinatorial study
of hypermaps via the so-called slice decomposition, initiated
in~\cite{Albenque2026}, with the goal of finding bijective proofs of the
intriguing formulas given in~\cite[Chapter~8]{Eynard2016}.

A hypermap can be seen as a properly face-bicolored map. Counting
hypermaps with prescribed numbers of faces of each degree and color is
an enumerative problem intimately connected with the so-called
two-matrix model and the Ising model on random
maps~\cite{Kazakov1986}. These are extremely interesting models from
the point of view of theoretical physics. In particular, the Ising
model on random maps is known to display a phase transition and a
critical point~\cite{Boulatov1987, Duits25}, at which the large-scale geometry
of maps is expected to be described by the so-called Liouville quantum
gravity metric at $\gamma=\sqrt{3}$~\cite{Ding2023}, i.e.\ central charge $c=1/2$. Confirming this expectation by rigorous mathematical results is a major open problem,
and gaining a better combinatorial understanding of
hypermaps will be helpful in such a task. Our focus is on hypermaps
with a ``mixed boundary'', as such objects arise when trying to apply
the method of peeling~\cite{Curien2023} to hypermaps.

Generating functions of these mixed boundary hypermaps were first studied 
in \cite{Eynard2002,Eynard2003} by turning Tutte's edge removal recursions 
into algebraic equations, and solving these.  
All these generating functions are algebraic functions on a genus zero 
spectral curve $E(x,y)=0$, whose rational parametrization $(x(z), y(z))$ is uniquely determined by equations \eqref{eq:defxynew} and \eqref{eq:aibieq} below.
Further work was carried out in \cite{EO2005,Eynard2016} in order to obtain
 explicit expressions using algebraic methods. It is natural to ask 
 if these expressions can be obtained through a \emph{combinatorial} approach, in order to 
 obtain a better understanding of hypermaps.
 This is the question we solve in this paper and the next one \cite{Lejeune2026}.

\paragraph{Acknowledgments.} We thank Marie Albenque and Emmanuel
Guitter for useful discussions. This work is supported by the ERC-SyG
project, Recursive and Exact New Quantum Theory (ReNewQuantum), which
received funding from the European Research Council (ERC) under the
European Union's Horizon 2020 research and innovation programme under
grant agreement No 810573, and by the Agence Nationale de la Recherche
via the grant ANR-23-CE48-0018 ``CartesEtPlus''.

\subsection{Basic definitions and notations}
\label{sec:hyperdef}

We start by introducing some basic definitions, and refer
to~\cite{Schaeffer2015} for more
background. A \emph{plane map}, hereafter called map for short, is a
connected finite graph drawn in the plane\footnote{The notion of plane
  map differs slightly from that of planar map, which is a graph drawn
  on the \emph{sphere}. A plane map corresponds to a planar map with a
  distinguished face, chosen as the outer face when projecting the
  sphere on the plane.} without edge crossings, and considered up to
homeomorphism. Loops and multiple edges are allowed.  A map consists
of \emph{vertices} and \emph{edges} stemming from the graph structure,
and of \emph{faces} which are the connected components of the
complement of the graph in the plane. Among these, the bounded ones
are called \emph{inner} faces, and the unbounded one is called the
\emph{outer} face. A \emph{corner} is the angular sector between two
edges appearing consecutively around a same vertex. It corresponds to
an incidence between a vertex and a face. The \emph{degree} of a vertex
or face is its number of corners.

A \emph{bridge} is an edge whose removal disconnects the map, or equivalently, since 
our map is plane, a bridge is an edge having the same face on its two sides. A \emph{path} is a sequence of 
consecutive edges; it is said to be \emph{closed} if it starts and ends at the same vertex. The \emph{contour} of a face is the closed path formed by its incident edges. We consider below maps that are oriented, i.e.\ where each edge 
has an intrinsic orientation. A path is said to be \emph{directed} if it respects the orientation of its edges.

A \emph{plane hypermap}\footnote{What we call a plane hypermap here
  corresponds, in the terminology
  of~\cite{Albenque2026}, to a planar hypermap
  with one non-monochromatic boundary, which is selected as the outer
  face.}, hereafter simply called a hypermap, is a plane map whose
edges are oriented in such a way that the contour of each inner face
forms a closed directed path. We do not require the contour of the outer
face to be directed.  An inner face is called \emph{white} if its
contour is directed clockwise, and \emph{black} if it is directed
counter-clockwise. Note that adjacent inner faces always have opposite
colours, and hence that an inner face cannot be incident to a bridge.

Given a hypermap and two corners $c,c'$ incident to the outer face,
the \emph{boundary interval} $[c,c']$ is the portion of the contour of the outer face that
lies between $c$ and $c'$, when turning counter-clockwise around the
hypermap.

\begin{figure}[t]
    \centering
    \begin{subfigure}[b]{0.75\textwidth}
        \includegraphics[width=\textwidth]{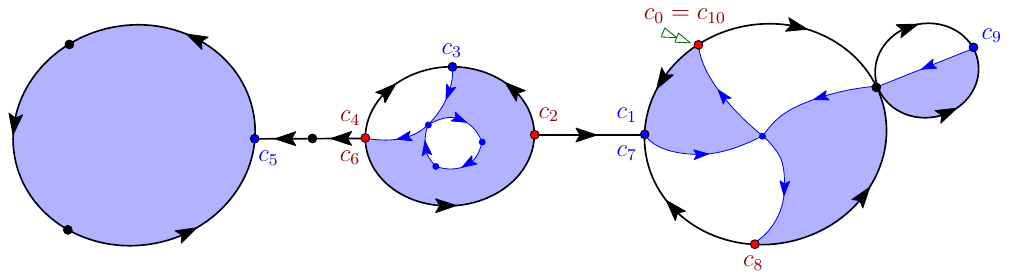}
        \caption{A $5$-alternating hypermap contributing $\displaystyle\frac{t^{16}\,(t_2^{\circ})^2\,(t_3^{\circ})^2\,t_2^{\bullet}\,(t_3^{\bullet})^{3}\,t_7^{\bullet}}{x_1^2\,y_1^2\,x_2^2\,y_2^2\,x_3^6\,y_3^3\,x_4^3\,y_4^2\,x_5^3\,y_5^3}$ to $H_5$.}
        \label{fig:hypermap_generale}
    \end{subfigure}
      \bigskip
    
    \begin{subfigure}[b]{0.42\textwidth}
        \includegraphics[width=\textwidth]{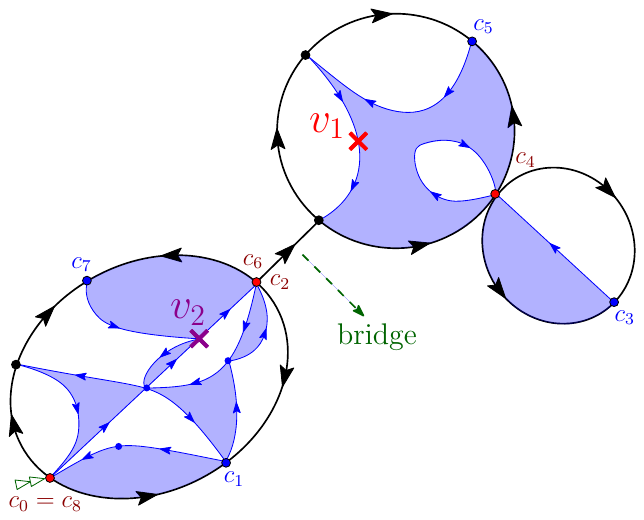}
        \caption{A $4$-alternating hypermap, accessibly pointed if $v_1$ is marked and not if $v_2$ is because of the presence of a bridge.}
        \label{fig:hypermap_accessible}
      \end{subfigure}
    \hfill
    \begin{subfigure}[b]{0.37\textwidth}
      \includegraphics[width=\textwidth]{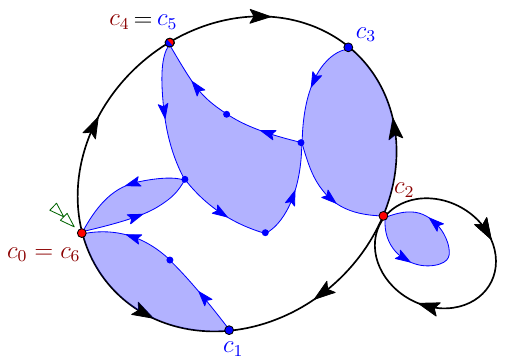}
        \caption{A strongly connected $3$-alternating hypermap contributing $\displaystyle\frac{t^{10}\,t_2^{\circ}\,t_3^{\circ}\,t_4^{\circ}\,t_7^{\circ}\,t_1^{\bullet}\,t_2^{\bullet}\,(t_3^{\bullet})^{2}\,t_5^{\bullet}}{x_1^2\,y_1^3\,x_2^2\,y_2^2\,x_3^1\,y_3^2}$}
        \label{fig:hypermap_blob}
      \end{subfigure}
    \caption{Examples of $k$-alternating hypermaps for different values of $k$. For readability, black faces are shown in blue and the boundary is shown in black. Also, marked corners with even (resp. odd) indices are shown in red (resp. blue).}
    \label{fig:hypermap}
\end{figure}

For $k$ a positive integer, a hypermap $\mathfrak{m}$ is said to be
\emph{$k$-alternating} if its outer face carries $2k$ marked incident
corners $c_0,c_1,c_2,\ldots,c_{2k-1},c_{2k}=c_0$, appearing in counter-clockwise
order and not necessarily distinct (as in Figure \ref{fig:hypermap_blob}), such that for all $i=1,\ldots,k$,
the boundary interval $[c_{2i-2},c_{2i-1}]$ is directed from $c_{2i-2}$ to $c_{2i-1}$ (i.e.\ counter-clockwise) and the boundary
interval $[c_{2i-1},c_{2i}]$ is directed from $c_{2i}$ to $c_{2i-1}$ (i.e.\ clockwise). See Figure~\ref{fig:hypermap} for some examples. We denote the lengths (number
of edges) of these boundary intervals by $\ell_i^\bullet(\mathfrak{m})$
and $\ell_i^\circ(\mathfrak{m})$ respectively.

Let $t,t_1^\circ,t_2^\circ,\ldots,t_1^\bullet,t_2^\bullet,\ldots$
be a collection of formal variables and $\mathcal{R}:=\Q[\![t,t_1^\circ,t_2^\circ,\ldots,t_1^\bullet,t_2^\bullet,\ldots]\!]$ the ring of formal power series in these variables. To a hypermap $\mathfrak{m}$, we
generally assign a weight
\begin{equation}
  \label{eq:wmdef}
  w(\mathfrak{m}) = t^{\#\{\text{vertices of } \mathfrak{m}\}} \prod_{\substack{f \text{ white}\\\text{inner face}}} t_{\deg(f)}^\circ \prod_{\substack{f \text{ black}\\\text{inner face}}} t_{\deg(f)}^\bullet\in\R,
\end{equation}
where $\deg$ stands for the degree. Given extra formal variables
$x_1,\ldots,x_k,y_1,\ldots,y_k$, we then set
\begin{equation}
  \label{eq:Hkdef}
  H_k(x_1,\ldots,x_k;y_1,\ldots,y_k; t, (t_i^{\circ})_{i\geqslant1},(t_j^{\bullet})_{j\geqslant1}) := \delta_{k,1} + \sum_{\mathfrak{m}} \frac{w(\mathfrak{m})}{
    x_1^{\ell_1^\bullet(\mathfrak{m})+1} y_1^{\ell_1^\circ(\mathfrak{m})+1} \cdots
    x_k^{\ell_k^\bullet(\mathfrak{m})+1} y_k^{\ell_k^\circ(\mathfrak{m})+1}},
\end{equation}
where the sum is over all $k$-alternating hypermaps
$\mathfrak{m}$. This quantity is a formal power series in $t$, the
$t_d^\circ$ and $t_d^\bullet$ for $d \geq 1$, and the $x_i^{-1}$ and
$y_i^{-1}$ for $i\in[\![1,k]\!]$, hence an element of $\big(\prod_{i=1}^k x_i^{-1}y_i^{-1}\big)\R[\![(x_i^{-1})_{1\leqslant i\leqslant k},(y_j^{-1})_{1\leqslant j\leqslant k}]\!]$. Using inverse variables here, and
adding the conventional term $\delta_{k,1}$, are choices made to be
consistent with~\cite{Eynard2016}. 

In the present paper we will give formulas for $H_1$ and $H_2$,
leaving the case of general $k$ to a subsequent
paper~\cite{Lejeune2026}. However, we will obtain a general formula
for a variant of $H_k$ involving so-called accessibly pointed
hypermaps.

More precisely, we say that a vertex $v$ of a hypermap is
\emph{accessible} if, for every other vertex $u$, there exists a
directed path going from $u$ to $v$. Note that we do not require the existence of a directed path from $v$ to $u$. An \emph{accessibly pointed}
hypermap is a hypermap endowed with a distinguished accessible vertex.
See Figure~\ref{fig:hypermap_accessible}. We
denote by $A_k(x_1,\ldots,x_k;y_1,\ldots,y_k)$ the generating function
of accessibly pointed $k$-alternating hypermaps, obtained by just
changing the summation set in the right-hand side of~\eqref{eq:Hkdef}
and removing the conventional term $\delta_{k,1}$.

Finally, we define similarly the generating function
$B_k(x_1,\ldots,x_k;y_1,\ldots,y_k)$ of strongly connected
$k$-alternating hypermaps. By \emph{strongly connected}, we mean that
for every pair $(u,v)$ of vertices, there exists a directed path from
$u$ to $v$; equivalently, this means that every vertex is accessible,
see Figure~\ref{fig:hypermap_blob}. As we will see in
Section~\ref{sec:prelimlem} below, a hypermap is strongly connected if
and only if it contains no bridge.

We are now ready to state the main results of this paper.

\subsection{Main results}

Our first result is a general formula for
$A_k(x_1,\ldots,x_k;y_1,\ldots,y_k)$ which, as in~\cite{Eynard2002, Albenque2026}, is expressed in terms 
of some auxiliary quantities that we introduce first. Let us consider the formal
Laurent power series
\begin{equation}\label{eq:defxynew}
  x(z):=\sum_{i\geq -1} a_i z^{-i}\quad\text{ and }\quad y(z):=\sum_{i \geq -1} b_i z^i
\end{equation}
where the $a_i$ and $b_i$ are the elements of $\R$
determined recursively by the conditions
\begin{equation}
  \label{eq:aibieq}
  \begin{split}
    a_i = t \delta_{i,-1} + \sum_{d \geq 1} t^\bullet_d [z^{-i}] y(z)^{d-1} \qquad (i \geq -1)\\
    b_{-1} = 1, \qquad  b_i = \sum_{d \geq 1} t^\circ_d [z^i] x(z)^{d-1} \qquad (i \geq 0).
  \end{split}
\end{equation}
Here, the notation $[z^i]$ means that we extract the coefficient of
$z^i$ in the Laurent power series on the right.

\begin{thm}
  \label{thm:ak} For any $k \geq 1$, the generating function of accessibly pointed $k$-alternating
  hypermaps reads
  \begin{equation}\label{eq:ak}
    \begin{split}
      A_k(x_1,\ldots,x_k;y_1,\ldots,y_k) &= \sum_{\ell_1^\bullet,\ell_1^\circ,\ldots,\ell_k^\bullet,\ell_k^\circ \geq 0} \frac{[z^0] x(z)^{\ell_1^\bullet+\cdots+\ell_k^\bullet} y(z)^{\ell_1^\circ+\cdots+\ell_k^\circ}}{x_1^{\ell_1^\bullet+1}y_1^{\ell_1^\circ+1}\cdots x_k^{\ell_k^\bullet+1}y_k^{\ell_k^\circ+1}}\\
      &=[z^0] \frac{1}{\prod_{i=1}^k (x_i-x(z))(y_i-y(z))}.
    \end{split}
  \end{equation}
\end{thm}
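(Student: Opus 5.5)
The second equality in \eqref{eq:ak} is formal: expanding each factor $1/((x_i-x(z))(y_i-y(z)))$ as a geometric series in $x(z)/x_i$ and $y(z)/y_i$ and extracting $[z^0]$ term by term gives the first line. This extraction is legitimate in $\big(\prod_i x_i^{-1}y_i^{-1}\big)\R[\![x_i^{-1},y_j^{-1}]\!]$, because for fixed exponents $[z^0]x(z)^a y(z)^b$ is a finite computation in $\R$ (each $a_i,b_i$ with $i\ge 0$ has positive valuation in the weight variables). So the real content is the combinatorial identity: for fixed boundary lengths $(\ell_i^\bullet,\ell_i^\circ)$, the generating function of accessibly pointed $k$-alternating hypermaps with these lengths equals $[z^0]x(z)^{L^\bullet}y(z)^{L^\circ}$, where $L^\bullet=\sum\ell_i^\bullet$ and $L^\circ=\sum\ell_i^\circ$.

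I would interpret $x(z)$ and $y(z)$ via the slice decomposition of \cite{Albenque2026}. The coefficients $a_i$ and $b_i$, defined by \eqref{eq:aibieq}, should be the generating functions of hypermap slices associated with a single boundary edge of each orientation, with $z$ recording the increment of a height (a "distance to the marked vertex" along directed paths). Concretely:
\begin{itemize}
\item $a_{-1}=t$ accounts for a boundary edge whose slice is trivial, contributing just a vertex weight;
\item the recursion $a_i=\sum_d t_d^\bullet[z^{-i}]y(z)^{d-1}$ expresses that a nontrivial slice is obtained by opening the black face adjacent to the base edge: its $d-1$ other edges are white-type slices whose heights must sum appropriately;
\item $b_i$ is defined symmetrically, with the roles of black and white exchanged.
\end{itemize}
I would first state, or take from \cite{Albenque2026}, that $a_i$ (resp.\ $b_i$) counts slices of a prescribed type with height change $i$ along the base edge.

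Next, given an accessibly pointed hypermap with marked vertex $v$, I would define a height labeling $h(u)$ for every vertex $u$. The natural choice is the directed distance from $u$ to $v$, which is finite precisely because $v$ is accessible. Along each outer boundary edge this label changes by at most one in one direction, and by an arbitrary amount in the other, which should match the index ranges $i\ge -1$ in $x$ and $y$. Cutting along leftmost directed geodesics from every boundary corner to $v$ should then decompose the map into $L^\bullet+L^\circ$ slices, one per boundary edge, placed in cyclic order.
\begin{itemize}
\item A counterclockwise-directed edge yields an $x$-type slice with weight $a_i z^{-i}$.
\item A clockwise-directed edge yields a $y$-type slice with weight $b_i z^{i}$.
\end{itemize}
Going once around the boundary, the total height change is zero, which is exactly the constraint imposed by the extraction $[z^0]$. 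Accessibility should also ensure that $v$ is counted exactly once, and that the vertex weight $t$ is distributed correctly: the $t$ in $a_{-1}$ accounts for the new vertex created at each trivial step.

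The main obstacle is making this decomposition bijective. One must show that the leftmost geodesics are well defined and do not cross, that they merge consistently, and that the slice pieces can be reglued to any sequence of slices with zero total height change, yielding a hypermap in which $v$ is accessible. The delicate part is the interaction with bridges and with the absence of strong connectivity: vertices that cannot reach $v$ must not exist, and I would use the fact (stated for Section~\ref{sec:prelimlem}) that non-strong-connectivity comes from bridges to control this. I would first verify the $k=1$ case, where the boundary is just two directed intervals. The general $k$ should then follow identically, since the slice decomposition is local to boundary edges and only the cyclic order of orientations changes.
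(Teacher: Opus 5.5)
Your proposal is correct and is essentially the paper's argument. The paper first cuts along the single leftmost geodesic from $c_0$ to $v$, which yields a (generalized) slice with base word $a^{\ell_1^\bullet}b^{\ell_1^\circ}\cdots a^{\ell_k^\bullet}b^{\ell_k^\circ}$ whose two sides have equal length (Proposition~\ref{prop:accesbij}). It then cuts that slice along the leftmost geodesics from the base corners into elementary slices (Proposition~\ref{prop:slicewordgf}); composed, these two steps are exactly your one-step cyclic decomposition, with $[z^0]$ encoding zero total height change.

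One bookkeeping correction: you say $v$ should be "counted exactly once", but in fact $v$ carries no $t$. Vertices on the right side of a slice, including the apex, are unweighted, and the $\ell=0$ term $[z^0]1=1$ is the one-vertex map. So the formula counts accessibly pointed maps with weight $t^{\#\text{vertices}-1}$, which is also what $\partial_t H_1=A_1H_1$ requires.
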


This theorem will be proved using slice decomposition in
Section~\ref{sec:access}. The symmetry of $A_k$ under permutation of
$x_1,\ldots,x_k,y_1,\ldots,y_k$ is remarkable and
unexpected. Indeed, from the combinatorial definition of $H_k$, $A_k$
and $B_k$, we only expect symmetries such as invariance under rotation
$(x_1,\ldots,x_k;y_1,\ldots,y_k) \mapsto
(x_2,\ldots,x_k,x_1;y_2,\ldots,y_k,y_1)$ or invariance under reflection
$(x_1,\ldots,x_k;\allowbreak y_1,\ldots,y_k) \mapsto
(x_k,x_{k-1},\ldots,x_2,x_1;y_{k-1},y_{k-2},\ldots,y_1,\allowbreak
y_k)$. This latter invariance corresponds to the weight-preserving involution on the set
of plane hypermaps which consists in performing a reflection of the
plane and reversing the orientation of the edges. Note that this
involution does not preserve the orientation of edges,
hence $A_k$ has a priori no reason to be invariant under such a
transformation.

Now, we would like to deduce expressions for $H_k$ and $B_k$ from that
of $A_k$ above. As said previously,
we only consider the cases $k=1,2$ in the present paper. Let us first consider the case $k=1$:

\begin{prop}
  \label{prop:k1}
  We have the relations
  \begin{equation}
    \label{eq:a1h1b1rel}
    A_1(x;y) = \frac{\partial}{\partial t} \ln H_1(x;y)
    =  \frac{\frac{\partial}{\partial t} B_1(x;y)}{1-B_1(x;y)}.
  \end{equation}
  As a consequence, we have
  \begin{equation}
    \label{eq:h1b1}
    H_1(x;y) = \frac{1}{1-B_1(x;y)} = \exp\left( [z^0]z\ln\left(1-\frac{y(z)}{y}\right)\frac{\partial}{\partial z}\ln\left(1-\frac{x(z)}{x}\right)
     \right).
  \end{equation}
\end{prop}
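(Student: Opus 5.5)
We would prove Proposition~\ref{prop:k1} by first establishing a combinatorial decomposition of $1$-alternating hypermaps into a sequence of strongly connected pieces, then differentiating with respect to $t$, and finally inserting Theorem~\ref{thm:ak} for $k=1$ and integrating in $t$.

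\textbf{Step 1: the bridge decomposition.} By the lemma announced for Section~\ref{sec:prelimlem}, a hypermap is strongly connected iff it has no bridge. In a $1$-alternating hypermap the bridges all lie on the outer boundary. Walking the directed boundary path from $c_0$ to $c_1$ (and likewise the one from $c_0$ to $c_1$ on the other side), we expect the bridges to be exactly the edges common to both sides. They cut the hypermap into a chain $\mathfrak{b}_1,\dots,\mathfrak{b}_r$ of bridgeless blocks, each of them $1$-alternating. Consecutive blocks are joined by a directed bridge going from the $c_1$-end of one block to the $c_0$-end of the next. Each bridge contributes one edge to each of $\ell^\bullet$ and $\ell^\circ$, and it merges no vertices. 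Moreover, a single vertex is an allowed strongly connected block, which should be what makes $H_1$ carry the term $\delta_{k,1}$.

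Under this decomposition the weight factorizes: each block contributes $w(\mathfrak{b})/(x^{\ell^\bullet+1}y^{\ell^\circ+1})$, and the $+1$ exponents exactly absorb the bridges. Assuming the single-vertex block is counted in $B_1$ (it contributes $t/(xy)$), we get $H_1 = 1+B_1+B_1^2+\cdots = 1/(1-B_1)$, where the empty sequence accounts for the conventional term $1$. Getting these conventions exactly right is the first point to check carefully.

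\textbf{Step 2: accessibility and $\partial_t$.} The operator $\partial_t$ marks a vertex. We claim that a vertex is accessible iff it lies in the last block $\mathfrak{b}_r$, i.e.\ the block at the head of all bridges. Indeed, inside a block every vertex reaches every other vertex, and the bridges only allow travel forward along the chain. Hence an accessibly pointed $1$-alternating hypermap is a sequence $\mathfrak{b}_1,\dots,\mathfrak{b}_{r-1}$ followed by a vertex-marked block. This gives
\[
A_1 = \frac{1}{1-B_1}\,\partial_t B_1 = \frac{\partial_t B_1}{1-B_1} = \partial_t \ln H_1 .
\]
The main obstacle is proving the structural claims rigorously: that the bridges form a linear chain with consistent orientation, and that all of them point toward the same end, so that accessibility is confined to one block. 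We would argue from the two directed boundary arcs: every bridge lies on both arcs, hence is traversed in the same direction by each, and this orients the chain.

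\textbf{Step 3: the explicit formula.} Theorem~\ref{thm:ak} gives $A_1=[z^0]\,1/((x-x(z))(y-y(z)))$. Since $H_1=1$ at $t=0$, we have $\ln H_1=\int_0^t A_1\,dt$. It therefore remains to check that
\[
F := [z^0]\, z\ln\!\left(1-\frac{y(z)}{y}\right)\partial_z\ln\!\left(1-\frac{x(z)}{x}\right)
\]
satisfies $\partial_t F = A_1$ with $F|_{t=0}=0$. At $t=0$, all $a_i$ vanish, so the $x$-logarithm is trivial and $F$ vanishes. For the derivative, we would use the standard trick: compute $\partial_t F$ at fixed $z$, then integrate by parts in $z$ using $[z^0]z\,\partial_z(\cdot)=-[z^0](\cdot)$ applied to $z\cdot$(a product). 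After this, the terms should combine into
\[
[z^0]\,\frac{z\bigl(\partial_t x\,\partial_z y-\partial_z x\,\partial_t y\bigr)}{(x-x(z))(y-y(z))}.
\]
We would then need the identity that $z(\partial_t x\,\partial_z y-\partial_z x\,\partial_t y)$ contributes as $1$ under this extraction, in the style of a Wronskian. We expect this to follow from the defining equations~\eqref{eq:aibieq} by differentiating them in $t$, where only the explicit $t\delta_{i,-1}$ term survives. This computation is the second delicate point. As a fallback, one can instead compare the coefficients of $x^{-\ell-1}y^{-m-1}$ on both sides directly, using the first line of~\eqref{eq:ak}.
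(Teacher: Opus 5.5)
Your Steps 1 and 2 are sound. Step 2 is essentially the paper's argument in Section~\ref{sec:a1b1}. Step 1 replaces the paper's pointed decomposition $\partial_t H_1=A_1H_1$ (Section~\ref{sec:h1a1}) with the direct proof of $H_1=1/(1-B_1)$ that the paper attributes to \cite{Albenque2026}. Either way \eqref{eq:a1h1b1rel} follows. (The constant $1$ in $H_1$ comes from the empty sequence, not from the one-vertex block, as you eventually say.)

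The genuine gap is in Step 3. Its structure is exactly that of Proposition~\ref{prop:a1dt}, but the identity everything rests on, $\partial_z x\,\partial_t y-\partial_z y\,\partial_t x=1/z$ (Lemma~\ref{lem:poisson}), is only hoped for. The mechanism you propose does not work. Differentiating \eqref{eq:aibieq} gives $\partial_t a_i=\delta_{i,-1}+\sum_d t_d^\bullet[z^{-i}]\partial_t\bigl(y(z)^{d-1}\bigr)$ and $\partial_t b_i=\sum_d t_d^\circ[z^{i}]\partial_t\bigl(x(z)^{d-1}\bigr)$. These sums do not vanish, so it is not true that only the explicit $t\delta_{i,-1}$ term survives; an additional idea is needed. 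The paper gets the identity from the monochromatic-boundary results:
\begin{itemize}
\item $y(z)=Y(x(z))$ turns the bracket into $\partial_z x\cdot(\partial_t Y)(x(z))$;
\item $\partial_t Y=\partial_t W^\circ=\partial_x\ln z^\circ$ by Propositions~\ref{prop:dW/dt}, \ref{prop:monochromatic} and \ref{prop:dmonochromatic};
\item together with $z^\circ(x(z))=z$, this gives $1/z$.
\end{itemize}

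If you want to stay algebraic, a support argument closes the gap. Let $P$ denote the bracket.
\begin{itemize}
\item By \eqref{eq:aibieq}, $y=V_\circ'(x(z))-\widetilde W$, where $\widetilde W$ is the part of $V_\circ'(x(z))$ with negative powers of $z$, minus $z^{-1}$. So $\widetilde W\in z^{-1}\R[\![z^{-1}]\!]$.
\item Likewise $x=V_\bullet'(y(z))-\widetilde U$, where $\widetilde U$ is the part of $V_\bullet'(y(z))$ with powers $z^{\geq 2}$, minus $tz$. So $\widetilde U\in -tz+z^2\R[\![z]\!]$.
\item Substituting the first relation, the $V_\circ''$ terms cancel and $P=\partial_t x\,\partial_z\widetilde W-\partial_z x\,\partial_t\widetilde W\in z^{-1}\R[\![z^{-1}]\!]$.
\item Substituting the second, $P=\partial_z y\,\partial_t\widetilde U-\partial_t y\,\partial_z\widetilde U\in z^{-1}+\R[\![z]\!]$.
\end{itemize}
Hence $P=z^{-1}$; this is where the explicit $tz$ really enters. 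Your fallback of comparing coefficients needs this same identity, so it is not an independent route.

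Two smaller errors also need fixing.
\begin{itemize}
\item $[z^0]\,z\,\partial_z(\cdot)$ vanishes identically; it is not $-[z^0](\cdot)$. Done correctly, the computation gives $\partial_t F=[z^0]\,zP/\bigl((x-x(z))(y-y(z))\bigr)$. Your combination equals $-zP=-1$ and would yield $\partial_t F=-A_1$.
\item The $a_i$ do not vanish at $t=0$; for example $a_{d-1}$ contains the monomial $t_d^\bullet$. What is true is that $a_{-1}$ and the $b_i$ with $i\geq 1$ vanish at $t=0$. Then $z\,\partial_z\ln(1-x(z)/x)$ has only negative powers of $z$ and $\ln(1-y(z)/y)$ only nonpositive ones, which gives $F|_{t=0}=0$.
\end{itemize}
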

The expression~\eqref{eq:h1b1} was previously given
in~\cite{Albenque2026}\footnote{In this reference, $1$-alternating hypermaps
  are called hypermaps with a Dobrushin boundary.}, where it is also
explained how it matches with the results stated
in~\cite{Eynard2002,Eynard2016}. Here, we give a new proof of this expression,
which consists in integrating~\eqref{eq:a1h1b1rel} and using
Theorem~\ref{thm:ak} for $k=1$.

Moreover, with the aim of giving various formulations for the central case $k=1$, we will show the following identity thanks to Proposition \ref{prop:k1},:
\begin{prop}\label{prop:a1E}
Assume white and black faces have bounded degrees, which means that we set $t_i^{\circ}$ and $t_j^{\bullet}$ to $0$ for $i$ and $j$ large enough so that $x(z)$ and $y(z)$, defined in Equation \eqref{eq:defxynew}, are now Laurent polynomials.\\
Let $E(x,y)$ be the resultant with respect to $z$ of $(x(z)-x,y(z)-y)$. Also let $Y(x)$, resp. $X(y)$, be the unique Laurent power series in $x^{-1}$ such that $E(x,Y(x))=0$ (resp. the unique Laurent power series in $y^{-1}$ such that $E(X(y),y)=0$). Then we have the formula
\begin{equation}\label{eq:a1E}
A_1(x;y)=\frac{\partial}{\partial t}\ln\left(\frac{E(x,y)}{(x-X(y))(y-Y(x))}\right).
\end{equation}
\end{prop}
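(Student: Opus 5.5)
The plan is to deduce \eqref{eq:a1E} from Proposition~\ref{prop:k1}. Since $A_1=\frac{\partial}{\partial t}\ln H_1$, it is enough to show that
\[
L(x,y):=[z^0]\,z\ln\Bigl(1-\frac{y(z)}{y}\Bigr)\frac{\partial}{\partial z}\ln\Bigl(1-\frac{x(z)}{x}\Bigr)
\]
differs from $\ln\bigl(E(x,y)/((x-X(y))(y-Y(x)))\bigr)$ only by terms whose $t$-derivative vanishes. As a cross-check I would also run the same residue computation on the formula $A_1=[z^0]\,1/((x-x(z))(y-y(z)))$ of Theorem~\ref{thm:ak}. With bounded degrees we have $x(z)=tz+a_0+\dots+a_pz^{-p}$ and $y(z)=z^{-1}+b_0+\dots+b_qz^{q}$. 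The extraction $[z^0]$ of a series in $x^{-1},y^{-1}$ becomes a contour integral $\frac{1}{2i\pi}\oint \cdot\,\frac{dz}{z}$ over a circle $|z|=r$ in the regime $|x|,|y|$ large. In that regime the roots of $x(z)=x$ split into one large root $\zeta_0(x)\sim x/t$ outside the circle and $p$ small roots $\zeta_1,\dots,\zeta_p$ inside it. Likewise the roots of $y(z)=y$ split into one small root $\omega_0(y)\sim 1/y$ inside and $q$ large roots outside.

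First I identify the pieces of the right-hand side. The resultant factors as
\[
E(x,y)=c\prod_{j=0}^{p}\bigl(y(\zeta_j)-y\bigr)\times(\text{monomial in the }\zeta_j),
\]
where $c$ depends only on leading coefficients, and the monomial is $\prod_j\zeta_j$, which arises from clearing the pole of $y$ at $0$; by Vieta it equals $\pm a_p/t$. Since $Y(x)$ is the solution that is a Laurent series in $x^{-1}$, we get $Y(x)=y(\zeta_0(x))$. Symmetrically $X(y)=x(\omega_0(y))$, and also $E=c'\prod_m(x(\omega_m)-x)\times(\cdots)$ over the roots $\omega_m$ of $y(z)=y$. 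Hence
\[
\frac{E}{y-Y(x)}=\pm c\,\frac{a_p}{t}\prod_{j=1}^{p}\bigl(y(\zeta_j)-y\bigr),
\]
and $x-X(y)$ is the contribution of the single small root $\omega_0$.

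Next I compute $L$ by residues. The form $\frac{\partial}{\partial z}\ln(1-x(z)/x)\,dz$ has simple poles with residue $+1$ at each $\zeta_j$ and residue $-p$ at $z=0$. Writing $\ln(1-y(z)/y)$ as $\ln(y-y(z))-\ln y$, its only singularities inside the circle are the logarithmic branch points at $0$ and $\omega_0$. I would therefore deform the contour inward, after integrating by parts if convenient. The poles at $\zeta_1,\dots,\zeta_p$ give $\sum_{j\ge1}\ln(1-y(\zeta_j)/y)$. The cut from $0$ to $\omega_0$ gives, by the jump $2i\pi$ of the logarithm, $\ln\bigl(1-x(\omega_0)/x\bigr)=\ln\bigl((x-X(y))/x\bigr)$ with the appropriate sign. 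The residue at $0$ gives explicit terms in $\ln y$, $\ln x$ and possibly $\ln t$, $\ln a_p$. Assembling these, $L$ should equal
\[
\ln E-\ln(y-Y(x))-\ln(x-X(y))+\ln x+\ln y+(\text{terms in the leading coefficients}),
\]
and the $\ln x+\ln y$ part is $t$-independent.

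The main obstacle is the constant bookkeeping. The terms coming from leading coefficients ($a_{-1}=t$, $a_p$, $b_q$, the factor $\prod_j\zeta_j$, and the normalisation of the resultant) do depend on $t$, so they must cancel exactly for \eqref{eq:a1E} to hold as stated rather than up to an additive $\partial_t\ln(\text{const})$. I would fix them by comparing both sides at leading order in $x^{-1},y^{-1}$, where $A_1=\frac{1}{xy}+\dots$ while $\ln(E/((x-X)(y-Y)))$ must have vanishing constant $t$-derivative. If needed, I would rederive everything directly from $A_1=[z^0]\,1/((x-x(z))(y-y(z)))$. Applying $\partial_t$ at fixed $x,y$ to $\ln(y(\zeta_j)-y)$ gives $\frac{y'(\zeta_j)\partial_t\zeta_j+\partial_ty(\zeta_j)}{y(\zeta_j)-y}$ with $\partial_t\zeta_j=-\partial_tx(\zeta_j)/x'(\zeta_j)$. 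The same residue sum then expresses this as the $[z^0]$ of the symplectic combination $(\partial_t x\,dy-\partial_t y\,dx)/((x-x(z))(y-y(z)))$. One would then use the identity $\partial_tx\,y'-\partial_ty\,x'=1/z$, obtained from the fact that $\partial_t$ of the equations \eqref{eq:aibieq} is governed by $t\,\delta_{i,-1}$; this reduces the expression to Theorem~\ref{thm:ak}.
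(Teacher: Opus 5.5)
Your argument is correct in substance. Its main line differs from the paper's, while your ``cross-check'' is essentially the paper's proof.

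\textbf{Comparison of routes.} The paper never integrates. It partial-fractions $[z^0]\frac{1}{(x-x(z))(y-y(z))}$ and keeps only the outer roots $z^\circ(x)$, $z_j^\bullet(y)$. It then factors $E=(x-X(y))\prod_j(x-x(z_j^\bullet(y)))$ over the roots of $y(z)=y$ and matches term by term through Lemma~\ref{lem:dx(z0)}, i.e.\ the bracket of Lemma~\ref{lem:poisson}. Your fallback is its mirror image: inner roots $\zeta_{j\ge1},\omega_0$, and a factorisation over the roots of $x(z)=x$. Both work because $\frac{dz}{z(x-x(z))(y-y(z))}$ has no residue at $0$ or $\infty$.

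Your main route instead evaluates $L=\ln H_1$ from Proposition~\ref{prop:k1} in closed form. This gives the undifferentiated identity \eqref{eq:h1E} together with its constant. It is not more elementary, since within this paper Proposition~\ref{prop:k1} rests on Lemma~\ref{lem:poisson} through Proposition~\ref{prop:a1dt}.

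As you describe it, the contour deformation is awkward. The pole of $d\ln(1-x(z)/x)$ at $0$ sits at the endpoint of the cut of $\ln(1-y(z)/y)$, so the ``residue at $0$'' and the cut integral diverge separately. It is cleaner, and purely formal, to factor
\[
1-\tfrac{x(z)}{x}=\tfrac{t\zeta_0}{x}\bigl(1-\tfrac{z}{\zeta_0}\bigr)\prod_{j\ge1}\bigl(1-\tfrac{\zeta_j}{z}\bigr),\qquad 1-\tfrac{y(z)}{y}=\tfrac{1}{y\omega_0}\bigl(1-\tfrac{\omega_0}{z}\bigr)\prod_{m\ge1}\bigl(1-\tfrac{z}{\omega_m}\bigr),
\]
and use $L=\sum_h h\,[z^h]\ln(1-x(z)/x)\,[z^{-h}]\ln(1-y(z)/y)$. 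This gives $L=-\ln(1-\omega_0/\zeta_0)+\sum_{j,m\ge1}\ln(1-\zeta_j/\omega_m)$. Then use $x-x(\omega_m)=-t(\omega_m-\zeta_0)\prod_{j\ge1}(1-\zeta_j/\omega_m)$ and $y(\zeta_0)-y=b_q\zeta_0^{-1}\prod_m(\zeta_0-\omega_m)$. These yield $e^L=-t^\circ_{q+1}\prod_{m}(x-x(\omega_m))/((x-X)(y-Y))$. No $\ln x$ or $\ln y$ terms survive, contrary to your guess; this is harmless under $\partial_t$.

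\textbf{First correction: normalisation of $E$.} The leading-coefficient terms do not cancel for the literal resultant. One has $\mathrm{Res}_z\bigl(z(y(z)-y),z^p(x(z)-x)\bigr)=\pm b_q\prod_m(x-x(\omega_m))$ with $b_q=t^{q}t^\circ_{q+1}$, where $q=d^\circ-1$. This adds $q/t$ to the right-hand side of \eqref{eq:a1E}. The formula holds only for $E$ normalised with a $t$-independent leading coefficient, e.g.\ monic in $x$. That is what the paper tacitly uses when it sets the proportionality constant to $1$. Your leading-order comparison will detect the discrepancy, but it forces this renormalisation rather than confirming a cancellation.

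\textbf{Second correction: sign of the bracket.} Your chain-rule computation produces $x'\partial_ty-y'\partial_tx$, and it is this combination that equals $1/z$ (Lemma~\ref{lem:poisson}; check on $x=tz$, $y=1/z$). As written, your symplectic combination and your identity both carry the opposite sign, so the two slips compensate.

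\textbf{On deriving the bracket.} Your idea of getting it directly from \eqref{eq:aibieq} does work. Writing $x-V_\bullet'(y)\in tz+z^2\R[z]$ shows the bracket lies in $z^{-1}+\R[z]$. Writing $y-V_\circ'(x)\in z^{-1}\R[z^{-1}]$ shows it lies in $z^{-1}\R[z^{-1}]$. Together these force it to equal $z^{-1}$.
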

We will detail why $x(z)-x$ and $y(z)-y$ are Laurent polynomials, and no longer Laurent power series, in the 
case of bounded face degrees while proving this proposition in Section \ref{sec:a1E}.
\\

Finally, for the case $k=2$, we will establish the following:
\begin{thm}
  \label{thm:h2b2}
  The generating function of $2$-alternating hypermaps reads
  \begin{equation}\label{eq:h2}
    H_2(x_1,x_2;y_1,y_2) = \frac{H_1(x_1;y_1)H_1(x_2;y_2)-H_1(x_1;y_2)H_1(x_2;y_1)}{
      (x_1-x_2)(y_1-y_2)}
  \end{equation}
  and that of strongly connected $2$-alternating hypermaps reads
\begin{equation}\label{eq:b2}
  B_2(x_1,x_2;y_1,y_2) = \frac{H_1(x_1;y_2)^{-1}H_1(x_2;y_1)^{-1}-H_1(x_1;y_1)^{-1}H_1(x_2;y_2)^{-1}}{
    (x_1-x_2)(y_1-y_2)}.
\end{equation}
\end{thm}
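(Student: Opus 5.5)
The plan is to mirror the proof of Proposition~\ref{prop:k1}: first derive combinatorial relations linking $A_2$, $H_2$, $B_2$ and the $k=1$ series, then combine them with Theorem~\ref{thm:ak} for $k=2$ and integrate in $t$.

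The first step concerns the structure of a $2$-alternating hypermap. Its bridges form a tree-like skeleton, and the strongly connected components (blobs, using the bridge characterization of Section~\ref{sec:prelimlem}) hang along it. I would decompose according to the blobs that carry the marked corners $c_0,\dots,c_3$. There are two possibilities. Either a single central blob is a $2$-alternating strongly connected hypermap, with $1$-alternating hypermaps (factors $H_1$) grafted at the four switching corners. Or the four intervals split into two $1$-alternating pieces joined by a directed chain of bridges and blobs; this second situation gives the terms of the form $H_1(x_i;y_j)$ crossed. I expect to obtain an identity expressing $H_2$ as $B_2$ dressed by products of $H_1$ factors, plus a correction term of the shape
\[
\frac{H_1(x_1;y_1)H_1(x_2;y_2)-H_1(x_1;y_2)H_1(x_2;y_1)}{(x_1-x_2)(y_1-y_2)}.
\]
The divided differences in this correction come from summing over the lengths where the boundary passes from one piece to the other. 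Alternatively, I could aim directly for the linear relation between $B_2$ and $H_1^{-1}$ in \eqref{eq:b2}. This relation is equivalent to \eqref{eq:h2} once $H_1=1/(1-B_1)$ is used, so proving either formula suffices, provided one also has the decomposition $H_2=\text{(dressing)}\circ B_2$.

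The second step, parallel to \eqref{eq:a1h1b1rel}, is to derive a relation between $A_2$ and $\partial_t$ of the right-hand side of \eqref{eq:h2}. I would mark a vertex in a $2$-alternating hypermap and decompose according to the set of vertices from which it is accessible. The accessible region is an accessibly pointed hypermap whose boundary has alternating pieces, glued to $1$- or $2$-alternating hypermaps along its boundary. Algebraically, I would check that Theorem~\ref{thm:ak}, via partial fractions
\[
\frac{1}{(x_1-x)(x_2-x)}=\frac{1}{x_1-x_2}\left(\frac{1}{x_2-x}-\frac{1}{x_1-x}\right),
\]
gives
\[
A_2(x_1,x_2;y_1,y_2)=\frac{A_1(x_1;y_1)+A_1(x_2;y_2)-A_1(x_1;y_2)-A_1(x_2;y_1)}{(x_1-x_2)(y_1-y_2)}.
\]
With $A_1=\partial_t\ln H_1$, one then verifies that $\partial_t$ of the logarithm, or a suitable normalized version of the conjectured $H_2$, reproduces this combination when paired with the combinatorial relation. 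Finally, uniqueness of the solution with the constant term fixed at $t=0$ closes the argument.

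The main obstacle is the combinatorial step: establishing exactly how $A_2$ relates to $H_2$ and $H_1$. Unlike the case $k=1$, the accessible region of a pointed $2$-alternating hypermap may touch only part of the boundary, so several cases arise. I would try first to show that every $2$-alternating hypermap decomposes uniquely as a $2$-alternating strongly connected core plus $H_1$ dressings, or as a chain; this gives $H_2$ in terms of $B_2$ and $H_1$. I would then derive $B_2$ from $A_2$ via the analogue of $A_1=\partial_tB_1/(1-B_1)$, checking consistency at first orders in $t$.
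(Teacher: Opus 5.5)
\textbf{Gap: the key relation for pointed $2$-alternating hypermaps is never established.} Your overall strategy is the right one: mark a vertex, split according to accessibility, use the partial-fraction form of $A_2$, and integrate in $t$. But the relation you call the ``main obstacle'' is exactly what is missing, and without it the algebraic step cannot even be written down. The relation needed is
\[
\partial_t H_2=A_2\,H_1(x_1;y_1)H_1(x_2;y_2)+\bigl(A_1(x_1;y_2)+A_1(x_2;y_1)\bigr)H_2 .
\]
It comes from splitting according to whether the \emph{outward} corners $c_0,c_2$ can reach the marked vertex $v$.

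If both can, only $c_1,c_3$ can be cut off by bridges. This gives an $A_2$ core dressed by $H_1(x_1;y_1)$ and $H_1(x_2;y_2)$.

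If $c_0$ cannot, the blocking bridges must lie in $[c_1,c_3]$. Cutting the one closest to $v$ leaves two pieces. One is an accessibly pointed $1$-alternating piece containing $c_2$ and $v$, with the crossed variables $(x_2;y_1)$. The other is an unpointed piece that is still $2$-alternating, so its factor is $H_2$ itself, not a product of $H_1$'s. The case where $c_2$ cannot reach $v$ is symmetric, and both cannot fail at once.

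This self-referential term makes the equation a linear ODE in $t$. Multiply by the integrating factor $1/(H_1(x_1;y_2)H_1(x_2;y_1))$, and use your partial-fraction formula for $A_2$ together with $A_1=\partial_t\ln H_1$. The right-hand side then becomes $\frac{1}{(x_1-x_2)(y_1-y_2)}\,\partial_t\frac{H_1(x_1;y_1)H_1(x_2;y_2)}{H_1(x_1;y_2)H_1(x_2;y_1)}$, which integrates to \eqref{eq:h2}. Your ``$\partial_t$ of the logarithm, or a suitable normalized version'' does not identify this, and in particular $\partial_t\ln H_2$ has no useful form.

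\textbf{Your first step is wrong as stated: there is no ``chain'' case and no correction term.} On the outer contour, each bridge is traversed once along and once against its orientation. So its two occurrences lie in one counterclockwise-directed interval and one clockwise-directed interval. For $k=2$, any two such intervals are adjacent. Hence every bridge cuts off exactly one marked corner, and never separates $\{c_0,c_1\}$ from $\{c_2,c_3\}$, or $\{c_1,c_2\}$ from $\{c_3,c_0\}$. For each corner, remove the bridge cutting it off that is closest to the core. This gives the exact product
\[
H_2=H_1(x_1;y_1)H_1(x_1;y_2)H_1(x_2;y_1)H_1(x_2;y_2)\,B_2 ,
\]
so there is nothing for a correction term to count. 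It is this identity, not $H_1=1/(1-B_1)$, that makes \eqref{eq:b2} equivalent to \eqref{eq:h2}.

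Your fallback of deriving $B_2$ from $A_2$ through an analogue of $A_1=\partial_t B_1/(1-B_1)$ is not specified. In any case it reduces to the same ODE, $\partial_t\bigl(H_1(x_1;y_1)H_1(x_2;y_2)B_2\bigr)=A_2\,\frac{H_1(x_1;y_1)H_1(x_2;y_2)}{H_1(x_1;y_2)H_1(x_2;y_1)}$, so it does not bypass the missing lemma.
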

The first expression corresponds
to~\cite[Corollary~8.4.1]{Eynard2002,Eynard2016}, and we here give a
combinatorial proof, as asked in this reference.

\subsection{Outline}

In Section \ref{sec:enumaccess}, we study the case of accessibly pointed hypermaps, where every vertex
can be connected to a distinguished vertex, after redefining the essential tool for their
enumeration: the hypermap slices. The realization of the link between accessible
hypermaps and slice decomposition is expressed in Theorem \ref{thm:ak}. 

We will then have the main tools required to prove relation
\eqref{eq:a1h1b1rel} concerning $1$-alternating hypermaps in Section \ref{sec:1alt_tranche}. We will thus
use these formulas to recover more classical results on this type of hypermaps, such
as Equation \eqref{eq:h1b1}, in Section \ref{sec:1alt_autre_relation}, and Proposition \ref{prop:a1E} in Section \ref{sec:a1E}. Section \ref{sec:1alt} as a whole will then establish Propositions \ref{prop:k1} and \ref{prop:a1E}. 

Finally, in Section \ref{sec:2alt}, we prove Theorem \ref{thm:h2b2}, which illustrates our general strategy: we will mark a vertex and then integrate 
in order to recover a formula in which $t$ does not appear explicitly. 

The case of $k$-alternating boundaries and the associated general formulas for $H_k$ and $B_k$ will appear in the
second part of the article \cite{Lejeune2026}.

\section{Bridges and accessibility in plane hypermaps}
\label{sec:prelimlem}

Let us record here a few elementary observations related to the notion
of accessibility and strong connectivity in plane hypermaps. We start
with a simple but key lemma.

\begin{lem}\label{lem:pathnobridge}
  Given a plane hypermap, let $u$ and $v$ be two vertices such that there exists a non-directed path from $u$ to $v$ that does not contain a bridge. Then, there exists a directed path from $u$
  to $v$ (and vice versa).
\end{lem}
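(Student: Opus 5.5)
The plan is to reduce to a single edge. A non-directed path from $u$ to $v$ is a sequence of edges $e_1,\dots,e_m$ joining consecutive vertices $u=w_0,w_1,\dots,w_m=v$. Directed paths concatenate, so it suffices to prove the following claim: for every edge $e$ that is not a bridge, with endpoints $a$ and $b$, there are directed paths from $a$ to $b$ and from $b$ to $a$. Applying the claim to each $e_j$ and concatenating then gives a directed path from $u$ to $v$. Running the same argument along the reversed path gives one from $v$ to $u$.

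To prove the claim, orient $e$ from $a$ to $b$ without loss of generality. The directed path from $a$ to $b$ is $e$ itself, so only the reverse direction needs work. Since $e$ is not a bridge, the two sides of $e$ lie in two distinct faces. At most one of these is the outer face, so at least one side of $e$ is an inner face $f$.

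By the definition of a hypermap, the contour of $f$ is a closed directed path. Since $e$ appears on this contour, the contour's orientation agrees with the orientation of $e$. Starting at $b$ and following the contour of $f$ in its directed sense, we traverse all the other edges of $f$ and eventually reach the tail $a$ of $e$. This gives a directed path from $b$ to $a$.

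The only point needing care is this last step: whether a closed directed contour passing through $e$ really yields a directed walk from the head $b$ of $e$ back to its tail $a$. The contour may revisit vertices, since inner faces need not be simple. However, the contour is a closed directed walk in which $e$ occurs as one step $a\to b$, so the remainder of the walk, read cyclically from just after $e$ back to just before it, is a directed walk from $b$ to $a$. Any such walk can be shortened to a directed path if paths are required to be simple. Loops, where $a=b$, are trivial. I also need to confirm that an edge that is not a bridge has at least one inner face among its sides; this holds because the two sides are distinct faces and only one face is outer.
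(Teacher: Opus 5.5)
Your proof is correct and follows essentially the same route as the paper. The paper replaces each edge of the path that points the wrong way by the rest of the directed contour of an inner face incident to it, which exists since the edge is not a bridge; your per-edge claim plus concatenation is the same argument, with slightly more explicit bookkeeping (loops, non-simple contours, both directions).
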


\begin{proof}
  By the assumption, there exists a not necessarily directed path
  $P$ from $u$ to $v$ which does not pass through a bridge. We may
  then construct a directed path $P'$ from $u$ to $v$ as
  follows. Consider every edge $e$ that appears in the wrong direction
  along $P$. Since $e$ is not a bridge, it is by planarity incident to at least one
  inner face $f$. The contour of $f$, being a directed cycle, provides
  a way to replace $e$ by a directed path (of possibly more than one
  edge) going in the right direction.
\end{proof}

As a consequence of our key lemma we get a full characterization of
strong connectedness for plane hypermaps.

\begin{lem}\label{lem:stronglyconnected}
  A plane hypermap is strongly connected if and only if it contains no
  bridge.
\end{lem}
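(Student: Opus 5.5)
The proof has two directions, and both rest on Lemma~\ref{lem:pathnobridge} together with the basic fact that a bridge, having the same face on both sides, separates the map into two components.

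\emph{No bridge implies strongly connected.} Suppose the hypermap contains no bridge. A map is connected by definition, so for any two vertices $u,v$ there is a (not necessarily directed) path from $u$ to $v$. This path cannot use a bridge, because there are none. Lemma~\ref{lem:pathnobridge} then gives a directed path from $u$ to $v$. Since $u$ and $v$ were arbitrary, the hypermap is strongly connected.

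\emph{Strongly connected implies no bridge.} Suppose the hypermap is strongly connected, and assume for contradiction that it contains a bridge $e$, oriented from $a$ to $b$. Removing $e$ disconnects the map into two components $C_a \ni a$ and $C_b \ni b$.

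\begin{itemize}
\item Any path from $C_b$ to $C_a$ must use $e$. Otherwise it would lie in the map with $e$ deleted, which contradicts the disconnection.
\item A path traversing $e$ from the $C_b$ side to the $C_a$ side goes against the orientation of $e$, so it is not directed. More precisely, consider the last time such a path crosses from $C_b$ into $C_a$: that crossing is necessarily along $e$, from $b$ to $a$, against its orientation.
\end{itemize}

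Hence there is no directed path from $b$ to $a$, contradicting strong connectivity. So the hypermap has no bridge.

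The only delicate point is justifying that a bridge truly disconnects the map. The paper defines a bridge equivalently as an edge whose removal disconnects the map, so this is immediate. There is also a degenerate case: a bridge that is a loop cannot occur, because a loop always separates the plane into two distinct faces. I expect no real obstacle.
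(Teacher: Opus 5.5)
Your proof is correct and follows the paper's argument. The forward direction is exactly an application of Lemma~\ref{lem:pathnobridge} to an arbitrary connecting path. The converse is the observation that no directed path can return from the head of a bridge to its tail, which you justify more explicitly than the paper by looking at the two components obtained by deleting the bridge.
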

For instance, in Figure \ref{fig:hypermap}, only the third hypermap is strongly connected
because of the existence of respectively three and one bridge in the first and
second hypermaps.

\begin{proof}[Proof of Lemma~\ref{lem:stronglyconnected}]
  Let $\mathfrak{m}$ be a plane hypermap. If $\mathfrak{m}$ has no
  bridge then it is strongly connected by Lemma~\ref{lem:pathnobridge}.
  Conversely, if $\mathfrak{m}$ contains a bridge $b$, then there
  exists no directed path going from the endpoint of $b$ to its
  origin.
\end{proof}

\begin{rem}
Note that the proof of Lemma~\ref{lem:pathnobridge}, and hence Lemma~\ref{lem:stronglyconnected}, rely crucially on planarity and on the fact that there exists only one 
face whose contour is not a directed cycle. Figure \ref{fig:nobridgenostrong} displays 
two examples of oriented maps which are not strongly connected despite having no bridge.
\end{rem}

\begin{figure}
  \centering
  \includegraphics{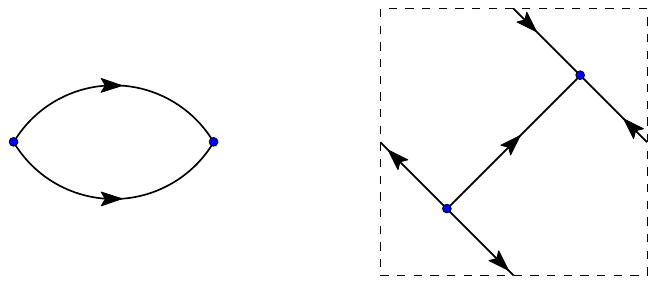}
  \caption{Two maps with oriented edges which are not strongly connected despite having no bridge. The left one is planar and has two faces, none of which forms a directed cycle. The right one has genus one (we identify opposite sides of the dashed square) and one face.}
  \label{fig:nobridgenostrong}
\end{figure}

\begin{rem}\label{rem:stronglyconnected}
Another way to understand this lemma is to define an equivalence 
relation between the vertices of a hypermap. For $u,v$ vertices of a fixed hypermap, one sets $u\leftrightarrow v$ if, 
and only if, there exists a directed path from $u$ to $v$ and a directed path from $v$ to $u$.

Then Lemma \ref{lem:pathnobridge} and Lemma \ref{lem:stronglyconnected} imply 
that the equivalence class of a vertex with respect to this equivalence relation is a strongly connected hypermap. Hence any alternating hypermap 
can be divided into strongly connected maps. This will be useful in Sections \ref{sec:a1b1} and \ref{sec:b2}.
\end{rem}

Another consequence of our key lemma is the following:

\begin{lem}\label{lem:accesscrit}
  For a vertex $v$ of a plane hypermap to be accessible, it is
  sufficient to ensure that it can be attained from any vertex
  incident to the outer face.
\end{lem}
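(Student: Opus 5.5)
The plan is to show that every vertex $u$ has a directed path to some vertex incident to the outer face. Combined with the hypothesis that $v$ can be reached from every outer vertex, this gives a directed path from $u$ to $v$ by concatenation. If $u$ is itself incident to the outer face, there is nothing to prove, so we assume $u$ is an inner vertex, i.e.\ not incident to the outer face.

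The key step is to use Lemma~\ref{lem:pathnobridge}. Because the map is connected, there is a non-directed path from $u$ to some outer vertex. We choose a shortest path $P = (u=u_0,u_1,\ldots,u_m)$ from $u$ to the set of outer vertices. Then $u_m$ is an outer vertex and $u_0,\ldots,u_{m-1}$ are inner vertices. We next argue that no edge of $P$ is a bridge:
\begin{itemize}[label=$\cdot$]
\item A bridge has the same face on both sides.
\item An inner face cannot be incident to a bridge, as noted in Section~\ref{sec:hyperdef}.
\item Hence every bridge lies entirely on the contour of the outer face, so both of its endpoints are outer vertices.
\item Every edge of $P$ has at least one endpoint among $u_0,\ldots,u_{m-1}$, which are inner vertices, so it cannot be a bridge.
\end{itemize}
Lemma~\ref{lem:pathnobridge} then provides a directed path from $u$ to $u_m$.

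The main point to check carefully is that bridges only join outer vertices. This follows because both sides of a bridge belong to one face, and that face cannot be an inner face since inner contours are directed cycles. One should also confirm that Lemma~\ref{lem:pathnobridge} indeed applies to paths avoiding bridges, even though its replacement of an edge may route through other vertices, including outer ones. This is harmless, since only the endpoints matter.

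Finally, $v$ itself is trivially fine. If $v$ is not an outer vertex, the argument applies equally to every $u \neq v$, and all such $u$ reach $v$ through an outer vertex. Therefore $v$ is accessible.
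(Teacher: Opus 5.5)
Your proof is correct and follows the paper's argument: the paper likewise shows that every vertex has a directed path to an outer vertex, by noting that bridges only touch the outer face and then invoking Lemma~\ref{lem:pathnobridge}. Your choice of a path that stops at its first outer vertex, so that every edge has an inner endpoint and therefore cannot be a bridge, just makes explicit a step the paper leaves implicit.
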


\begin{proof}
  This follows from the fact that, for any vertex $u$ of a plane
  hypermap, we may construct a directed path going from $u$ to a
  vertex incident to the outer face. To see this, recall that bridges
  may only be incident to the outer face, and apply
  Lemma~\ref{lem:pathnobridge}.
\end{proof}

\begin{rem}\label{rem:inwardcorner}
  \begin{figure}
    \centering
    \begin{subfigure}[b]{0.31\textwidth}
        \includegraphics[width=\textwidth]{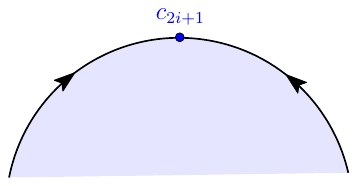}
        \caption{\centering An inward corner}
    \end{subfigure}
    \hspace{2cm}
    \begin{subfigure}[b]{0.31\textwidth}
        \includegraphics[width=\textwidth]{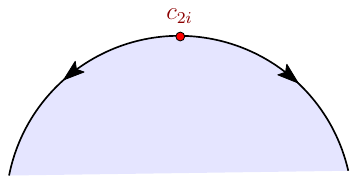}
        \caption{\centering An outward corner}
    \end{subfigure}
    \caption{Sketches of inward and outward corners. Inner faces are shown in light blue.}
    \label{fig:corner}
  \end{figure}

  It is even sufficient to ensure accessibility from
  the \textbf{inward corners}, that is the outer corners having two
  incoming outer edges, see Figure~\ref{fig:corner}. In an accessibly
  pointed $k$-alternating hypermap, the inward corners are
  marked corners with odd indices, while those
  having two outgoing outer edges are called the \textbf{outward
    corners} and have even indices. Note that the even or odd marked corners are not all necessarily inward corners or outward corners: this situation 
    occurs when some marked corners coincide, as illustrated on Figure \ref{fig:hypermap_blob} where $c_4=c_5$ is neither inward nor outward.
\end{rem}

\section{Enumeration of accessibly pointed $k$-alternating hypermaps}
\label{sec:enumaccess}

The main goal of this section is to prove Theorem~\ref{thm:ak}.  To
this end, we make use of the slice decomposition, introduced in the
context of hypermaps in~\cite{Albenque2026}.

\subsection{Slices}
\label{sec:slices}

\paragraph{Reminders.}

Let us start by recalling the definition of slices of types
$\mathcal{A}$ and $\mathcal{B}$ given
in~\cite{Albenque2026}:

\begin{Def} \label{Def:sliceAB}
  A \emph{slice of type} $\mathcal{A}$ or \emph{of type} $\mathcal{B}$
  is a plane hypermap with three distinguished corners, denoted $o$,
  $l$, and $r$, appearing in counterclockwise order around the outer
  face, and satisfying the following conditions:
  \begin{itemize}
  \item the boundary interval $[o,l]$, called the~\emph{left side},
    forms a directed path from $l$ to $o$ of minimal length, i.e.\ is a \emph{geodesic} from $l$ to $o$,
  \item the boundary interval $[r,o]$, called the~\emph{right side},
    forms the \textbf{unique} directed path from $r$ to $o$ of minimal
    length, i.e.\ is the unique geodesic from $r$ to $o$,
  \item the left and right sides only meet at the vertex incident to
    $o$, called the~\emph{apex},
  \item the boundary interval $[l,r]$, called the~\emph{base}, forms a
    directed path from $l$ to $r$ for a slice of type $\mathcal{A}$,
    and a directed path from $r$ to $l$ for a slice of type
    $\mathcal{B}$.
  \end{itemize}
  When the base has length one, the slice is said to be \emph{elementary}.
\end{Def}

For any integer $i$, let us denote by $a_i$ (resp.\ $b_i$) the
generating function of elementary slices of type $\mathcal{A}$ (resp.\
$\mathcal{B}$), in which the difference between the length of the
right side and the length of the left side is equal to $i$ (resp.\ $-i$). Here,
the weight of a slice is defined by a slight variant of the
formula~\eqref{eq:wmdef}, namely we do not attach a weight $t$ to the
vertices belonging to the right side. It is straightforward to check
that $a_i=b_i=0$ for $i < -1$ as there exists no slice contributing to
these generating functions, and it was shown
in~\cite{Albenque2026} that the sequences
$(a_i)_{i \geq -1}$ and $(b_i)_{i \geq -1}$ are determined recursively
by~\eqref{eq:defxynew} and~\eqref{eq:aibieq}.

\begin{rem}
  These recursive equations also appear
  in~\cite[Chapter~8]{Eynard2016}. Hypermaps
  are obtained by setting the variables denoted $a,b,c$ in this
  reference to $a=b=0$, $c=-1$. Our quantities $a_k$ and $b_k$ are
  related to $\gamma,\alpha_k,\beta_k$ of this reference by
  \begin{equation}
    \label{eq:Eyn_connection}
    a_{-1} = \gamma^2, \qquad a_k = \frac{\alpha_k}{\gamma^k}, \qquad
    b_k = \beta_k \gamma^k,
  \end{equation}
  which essentially corresponds to a change of variable
  $z \mapsto \gamma z$ in $x(z)$ and $y(z)$.
\end{rem}

\paragraph{Generalized slices.} For our purposes it is convenient to consider a slightly generalized notion of slices:

\begin{Def} \label{Def:genslice} A \emph{slice} is a plane hypermap
  with three distinguished corners, denoted $o$, $l$, and $r$,
  appearing in counterclockwise order on its outer face, and satisfying the
  first three items in Definition~\ref{Def:sliceAB}, the fourth
  item being replaced by the mere requirement that the apex be
  accessible.
\end{Def}

\begin{rem}
  Lemma~\ref{lem:accesscrit} ensures that a slice of type
  $\mathcal{A}$ or $\mathcal{B}$ is still a slice according to the
  above generalized definition, since the contour of the outer face
  consists of two directed paths ending at the apex. In a generalized
  slice, the accessibility requirement forbids the base $[l,r]$ from
  containing a bridge that points away from the apex.
\end{rem}

\begin{figure}[h!]
    \centering
    \begin{subfigure}[b]{0.31\textwidth}
        \includegraphics[width=\textwidth]{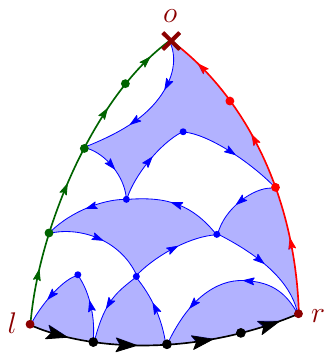}
        \caption{\centering A slice of type $\mathcal{A}$ with\\ base word $a^4$}
        \label{fig:tranche1}
    \end{subfigure}
    \hfill
    \begin{subfigure}[b]{0.31\textwidth}
        \includegraphics[width=\textwidth]{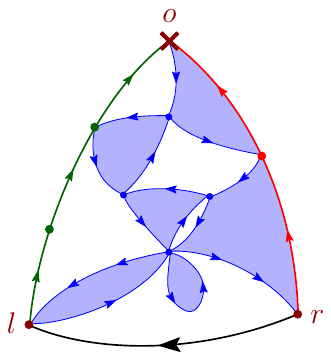}
        \caption{\centering An elementary slice of type $\mathcal{B}$ with base word $b$}
        \label{fig:tranche2}
    \end{subfigure}
    \hfill
    \begin{subfigure}[b]{0.29\textwidth}
        \includegraphics[width=\textwidth]{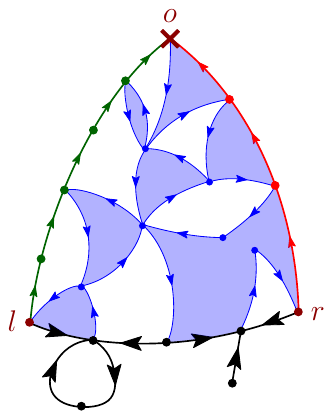}
        \caption{\centering A (generalized) slice with base word $ab^3abab$}
        \label{fig:tranche3}
    \end{subfigure}
    \caption{Examples of slices of different types and their associated word in $\{a,b\}$. Here the left side is represented in green while the right side is red and the base is black, as in \cite{Albenque2026}.}
    \label{fig:tranche}
\end{figure}

To each (generalized) slice, we associate its \emph{base word}, which
is a word in the alphabet $\{a,b\}$, as follows.  We consider the
orientation of the arrows along the base, read from $l$ to $r$: to
each arrow oriented from $l$ to $r$, we associate the letter $a$, and
to each arrow oriented from $r$ to $l$, the letter $b$. See
Figure~\ref{fig:tranche} for examples. Note that a slice has type
$\mathcal{A}$ (resp.\ $\mathcal{B}$) if and only if its base word
contains only $a$'s (resp.\ $b$'s). In an elementary slice the base
word has a single letter.

Interestingly, the generating function of slices with a prescribed
base word can be straightforwardly expressed in terms of the
quantities $x(z)$ and $y(z)$ counting elementary slices. 

\begin{prop} \label{prop:slicewordgf}
  Let $w$ be a word in the alphabet $\{a,b\}$, and let $i$ be an integer. Then, the generating function of slices with base word $w$, where
  the difference between the length of the left side and that of the right side is equal to $i$, is given by
  \begin{equation}
    [z^i] x(z)^{n_a(w)} y(z)^{n_b(w)}
  \end{equation}
  where $n_a(w)$ (resp.\ $n_b(w)$) denotes the number of occurrences
  of the letter $a$ (resp.\ $b$) in $w$.
\end{prop}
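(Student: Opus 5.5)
Let me write $n = |w|$ and write $w = w_1 w_2 \cdots w_n$. The plan is to establish a bijection between slices with base word $w$ and sequences $(s_1,\ldots,s_n)$ of elementary slices, where $s_j$ is of type $\mathcal{A}$ if $w_j=a$ and of type $\mathcal{B}$ if $w_j=b$. Under this bijection the weights should be multiplicative, and the left--right length differences should add up. Granting this, the statement follows at once. Indeed, elementary slices of type $\mathcal{A}$ with right-minus-left difference $m$ are counted by $a_m$, which is the coefficient of $z^{-m}$ in $x(z)$. Thus $x(z)$ is the generating function of elementary $\mathcal{A}$-slices with $z$ marking the left-minus-right difference. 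Similarly, $y(z)=\sum b_i z^i$ marks the same quantity for elementary $\mathcal{B}$-slices, since $b_i$ counts those with right-minus-left difference $-i$. Taking the product over the letters and extracting the coefficient $[z^i]$ then gives the formula.

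The bijection I would use is the standard slice decomposition via leftmost geodesics. Let $v_0,\ldots,v_n$ be the vertices of the base, read from $l$ to $r$. These vertices need not be distinct, and if there are repetitions one works with corners instead. By the accessibility of the apex, every $v_j$ has a directed path to the apex. From each base corner I would draw the leftmost geodesic to the apex, that is, the one staying as far left as possible. For $j=0$ this is the left side, and for $j=n$ it is the right side. The right side is indeed the leftmost geodesic from $r$ because it is the unique geodesic. Two leftmost geodesics, once they meet, merge and never cross. Cutting along all of them splits the slice into $n$ pieces. The $j$-th piece has base the single edge $v_{j-1}v_j$, left side the leftmost geodesic from $v_{j-1}$, and right side the leftmost geodesic from $v_j$, both truncated at their merging point.

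The main steps are then as follows.

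First, each piece is an elementary slice. One checks that the left side is a geodesic, that the right side is the unique geodesic in the piece, and that the two sides meet only at the apex. The uniqueness of the right geodesic comes from leftmost-ness: any other geodesic inside the piece would lie to the left of it, contradicting the choice. The base edge is directed according to $w_j$, so the piece has type $\mathcal{A}$ or $\mathcal{B}$. I expect two subtleties here. One is that a base edge could be a bridge; the accessibility condition forbids bridges pointing away from the apex, and the remaining bridges give degenerate elementary slices that one must check are counted. The other is the case where consecutive geodesics merge immediately.

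Second, I would check that lengths telescope. The difference (left length minus right length) of the big slice equals the sum of the differences of the pieces, because the geodesic distances to the merging points cancel. Weights also multiply, thanks to the convention that right-side vertices carry no $t$: each vertex is weighted in exactly one piece.

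Third, I would describe the inverse map: glue the elementary slices in order, identifying the right side of $s_j$ with the appropriate top portion of the left side of $s_{j+1}$, in the manner of \cite{Albenque2026}. Then one verifies that the glued object is a generalized slice with base word $w$, and that the cutting procedure recovers the $s_j$.

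The main obstacle is proving that the regluing preserves the geodesic properties. The glued left side must remain a geodesic, and the glued right side must remain the unique geodesic, even though the base word now mixes $a$'s and $b$'s; in \cite{Albenque2026} this was done only for pure words. To handle this, I would define the distance labels $d(v)$ to the apex and show that they agree with the labels inside each piece. The argument would go by induction on $n$: peel off the last letter, using the fact that a directed path in the glued map can be rerouted, by leftmost-ness and planarity, so that it crosses the gluing lines only along geodesics.
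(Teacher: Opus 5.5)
Your proposal is correct and follows the paper's argument: you cut along the leftmost geodesics from the base corners to the apex (which exist because the apex is accessible), exactly as in \cite{Albenque2026} but with elementary slices of both types in the sequence, so that weights multiply and left-minus-right differences add. The regluing step you single out as the main obstacle does not actually depend on the letters of $w$. The piecewise labels satisfy $\ell(u)\le \ell(u')+1$ along every directed edge $u\to u'$, by the triangle inequality inside an elementary slice containing that edge, so they equal the distances to the apex without any rerouting argument; uniqueness of the glued right side then follows from uniqueness in each piece, which is why the paper can call this a direct generalization.
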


\begin{proof}
  The proof is a direct generalization of that
  of~\cite[Proposition~2.8 and
    Corollary~2.9]{Albenque2026}: a slice can be bijectively decomposed
  into a sequence of elementary slices by cutting along the leftmost
  geodesics going from each outer corner along the base to the
  apex. Note that such leftmost geodesics always exist by the
  requirement that the apex be accessible. The new feature here is
  that we may mix elementary slices of types $\mathcal{A}$ and
  $\mathcal{B}$ in the sequence.
\end{proof}

\subsection{Bijection between slices and accessibly pointed hypermaps}
\label{sec:access}

Recall from Section~\ref{sec:hyperdef} the definition of accessibly pointed $k$-alternating hypermaps.

\begin{prop} \label{prop:accesbij}
  For any positive integer $k$ and any nonnegative integers
  $\ell_1^\bullet,\ell_1^\circ,\ldots,\ell_k^\bullet,\ell_k^\circ$,
  there is a weight-preserving bijection between the set of accessibly
  pointed $k$-alternating hypermaps $\mathfrak{m}$ such that
  $\ell_i^\bullet(\mathfrak{m})=\ell_i^\bullet$ and
  $\ell_i^\circ(\mathfrak{m})=\ell_i^\circ$ for all $i=1,\ldots,k$,
  and the set of slices with base word
  $a^{\ell_1^{\bullet}}b^{\ell_1^{\circ}}\cdots
  a^{\ell_k^{\bullet}}b^{\ell_k^{\circ}}$ where the left and right
  sides have the same length.
\end{prop}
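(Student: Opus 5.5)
The bijection I would construct cuts the hypermap open along a geodesic from the root corner to the marked vertex, in the spirit of the slice decomposition of~\cite{Albenque2026}. Let $\mathfrak{m}$ be an accessibly pointed $k$-alternating hypermap with marked vertex $v$, and let $u$ be the vertex incident to the corner $c_0$. Since $v$ is accessible, there is a directed path from $u$ to $v$, so directed geodesics from $u$ to $v$ exist. Among them, I select the leftmost geodesic $\gamma$ as seen from the corner $c_0$: it leaves $u$ through the first suitable outgoing edge after $c_0$ in the appropriate rotational order, and so on at each step. Since $\gamma$ is a geodesic, it is a simple path.

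I then cut $\mathfrak{m}$ open along $\gamma$, starting from the corner $c_0$. Every edge and vertex of $\gamma$ is doubled except $v$. The result is a plane map $\mathfrak{s}$ whose outer face contour consists of three parts:
\begin{itemize}
\item the original boundary, read counterclockwise from $c_0$ back to $c_0$, which becomes the base $[l,r]$;
\item two copies of $\gamma$, which become the left and right sides, both directed towards the apex $v$.
\end{itemize}
Inner faces are untouched, so $\mathfrak{s}$ is still a hypermap. The base word is read off from the boundary orientation, namely $a^{\ell_1^\bullet}b^{\ell_1^\circ}\cdots a^{\ell_k^\bullet}b^{\ell_k^\circ}$, with the marked corners $c_i$ sitting at the letter changes. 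The two sides have the same length, the sides meet only at the apex since $\gamma$ is simple, and the apex is accessible in $\mathfrak{s}$. For this last point, a directed path in $\mathfrak{m}$ from any vertex to $v$ can be rerouted: as soon as it hits $\gamma$, it follows the appropriate copy of $\gamma$ down to the apex. Alternatively, Lemma~\ref{lem:accesscrit} reduces the claim to boundary vertices, whose paths lift directly.

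It remains to check the geodesic conditions of Definition~\ref{Def:genslice}. The side copy on one part is trivially a geodesic, because distances to $v$ cannot decrease when cutting. The copy on the other part must be the \emph{unique} geodesic from its endpoint to the apex, and this is exactly where the leftmost choice of $\gamma$ is used. Any competing geodesic in $\mathfrak{s}$ would project to a geodesic in $\mathfrak{m}$ branching off strictly to the left of $\gamma$, which contradicts leftmostness. I would need to be careful about the conventions here, namely which copy is the right side and the counterclockwise order $o,l,r$, and fix the leftmost/rightmost choice accordingly.

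The inverse map glues the left and right sides of a slice, which by hypothesis have equal length, edge by edge starting from the apex. The corners $l$ and $r$ merge into $c_0$, and the apex becomes the marked vertex. Gluing preserves directed paths, so the apex remains accessible. The base becomes the whole outer contour, with the prescribed alternation, and inner faces are unchanged, so we get an accessibly pointed $k$-alternating hypermap. In the glued map, the image of the sides is a geodesic, and uniqueness of the right-side geodesic in the slice translates into that image being the leftmost geodesic. Hence the two constructions are mutually inverse. Weights are preserved because the vertices on the right side are exactly the duplicated ones, which are not weighted by $t$ in the slice convention. The degenerate case $u=v$ gives sides of length zero, and the construction handles it uniformly.

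I expect the main obstacle to be this uniqueness/leftmost correspondence, together with the situation where $\gamma$ shares vertices or edges with the boundary, or passes through bridges. In those cases one must verify that cutting really yields a connected plane map with the stated boundary, and that no shorter or alternative path appears in the slice. To handle this, I would first argue in the universal-cover-like picture: cutting along a simple path issued from an outer corner gives a connected map whose outer contour is traced as base, then right side, then left side. I would then compare directed paths in $\mathfrak{s}$ and $\mathfrak{m}$ by projection, exactly as in the proof of~\cite[Proposition~2.8]{Albenque2026}.
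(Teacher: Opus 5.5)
Your proposal is correct and follows the paper's proof: you cut along the leftmost geodesic from $c_0$ to the marked vertex, read the base word off the original outer contour, and invert by gluing the two equal-length sides, and your extra details (lifting directed paths to get accessibility of the apex, and the leftmost/unique-right-geodesic correspondence) fill in what the paper leaves to the reader. One small bookkeeping slip: the right side consists of the duplicated vertices \emph{plus} the apex, so under the slice convention the slice has weight $w(\mathfrak{m})/t$, and the bijection is weight-preserving only with the convention that the marked vertex carries no factor $t$, which the paper uses implicitly (e.g.\ in $\partial_t H_1 = A_1 H_1$).
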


\begin{proof}
  \begin{figure}[h!]
    \includegraphics[width=\textwidth]{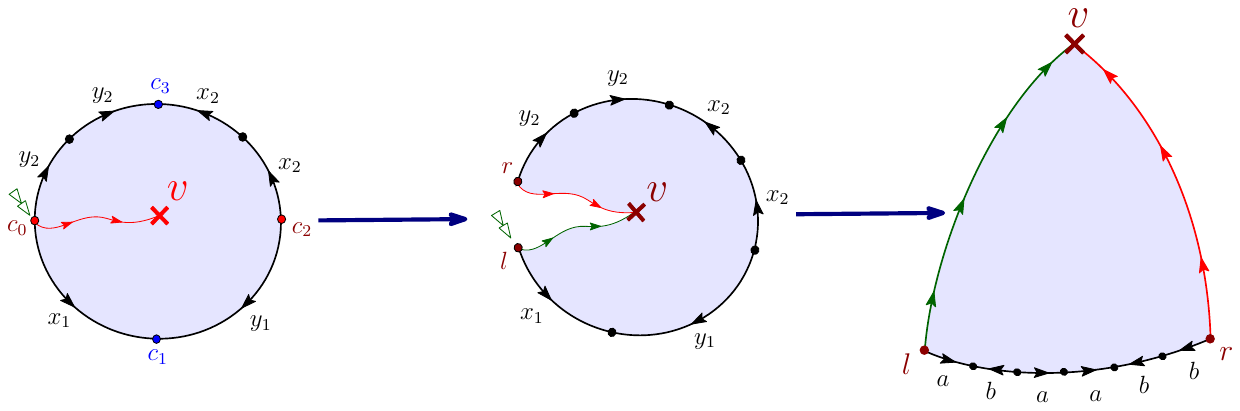}
    \caption{Sketch of the proof of Proposition \ref{prop:accesbij} in the case of a pointed $2$-alternating hypermap with $(\ell_1^{\circ},\ell_1^{\bullet},\ell_2^{\circ},\ell_2^{\bullet})=(1,1,2,2)$. The first step consists in taking the leftmost geodesic from the marked corner $c_0$ (left figure) to the marked vertex $v$. Then one cuts 
    along this geodesic, $c_0$ is now decomposed into to vertices: $r$ and $l$ (second figure), where $r$  and $l$ is connected to $v$ by resp. a unique 
    geodesic and a generic geodesic. Then a slice of base word $aba^2b^2$ rises by deforming continuously the second figure, which gives the third figure. Note that the marked vertex is now the apex.}
    \label{fig:decomp_en_tranche}
  \end{figure}
  
  The proof is illustrated on Figure~\ref{fig:decomp_en_tranche} and
  is similar to that
  of~\cite[Proposition~3.4]{Albenque2026}:
  starting from an accessibly pointed $k$-alternating hypermap with
  distinguished vertex $v$, we cut along the leftmost geodesic going
  from the first marked corner $c_0$ to $v$. Such a geodesic always
  exists since $v$ is assumed accessible. We may check that the
  resulting map is a slice, $v$ becoming the apex which remains
  accessible. Furthermore, the length constraints on the boundary
  intervals of the initial $k$-alternating hypermap entail that the
  base word of the resulting slice is
  $a^{\ell_1^{\bullet}}b^{\ell_1^{\circ}}\cdots
  a^{\ell_k^{\bullet}}b^{\ell_k^{\circ}}$. Finally, this construction is bijective as 
  the inverse bijection consists in gluing the left and right sides of the slice.
\end{proof}

By combining Propositions~\ref{prop:slicewordgf} and \ref{prop:accesbij} we obtain the following:
\begin{prop}\label{cor:word}
  The generating function of accessibly pointed $k$-alternating
  hypermaps $\mathfrak{m}$ such that
  $\ell_i^\bullet(\mathfrak{m})=\ell_i^\bullet$ and
  $\ell_i^\circ(\mathfrak{m})=\ell_i^\circ$ for all $i=1,\ldots,k$ is
  equal to
  \begin{equation}
    [z^0] x(z)^{\ell_1^\bullet+\cdots+\ell_k^\bullet} y(z)^{\ell_1^\circ+\cdots+\ell_k^\circ}.
  \end{equation}
\end{prop}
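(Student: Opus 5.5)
The plan is to compose the two preceding propositions directly. Fix nonnegative integers $\ell_1^\bullet,\ell_1^\circ,\ldots,\ell_k^\bullet,\ell_k^\circ$ and set $w:=a^{\ell_1^{\bullet}}b^{\ell_1^{\circ}}\cdots a^{\ell_k^{\bullet}}b^{\ell_k^{\circ}}$. By Proposition~\ref{prop:accesbij}, there is a weight-preserving bijection between two sets:
\begin{itemize}
\item the accessibly pointed $k$-alternating hypermaps with the prescribed boundary lengths;
\item the slices with base word $w$ whose left and right sides have equal length.
\end{itemize}
Consequently the two generating functions coincide. The slice side is then computed by Proposition~\ref{prop:slicewordgf} with $i=0$. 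Since $n_a(w)=\ell_1^\bullet+\cdots+\ell_k^\bullet$ and $n_b(w)=\ell_1^\circ+\cdots+\ell_k^\circ$, this yields exactly $[z^0]\, x(z)^{\sum_i\ell_i^\bullet} y(z)^{\sum_i\ell_i^\circ}$.

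The one point to check carefully is that ``weight-preserving'' refers to the same weights on both sides. Hypermaps carry the weight~\eqref{eq:wmdef}, with a factor $t$ per vertex. Slices, in the convention under which $x(z),y(z)$ count elementary slices, carry no factor $t$ for the vertices on the right side.

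To reconcile the two, I will trace the vertices through the cutting along the leftmost geodesic from $c_0$ to the marked vertex $v$. This geodesic is duplicated into the left side and the right side of the slice, and these share only the apex. Every vertex of the hypermap lying on the geodesic, including $v$ itself, therefore appears twice in the slice: once on the left side and once on the right side. All other vertices appear exactly once.

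Hence the hypermap's vertex count equals the slice's vertex count minus the number of right-side vertices. The apex is the one vertex common to both sides; its $t$ is retained through its left-side copy, so it is counted exactly once. With this convention the total $t$-weight of the slice equals $t^{\#\text{vertices of }\mathfrak m}$. Face weights are unchanged by cutting and gluing, since the inner faces and their degrees are preserved.

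I also need the convention in Proposition~\ref{prop:slicewordgf} to be consistent with this. When a slice is decomposed into elementary slices along leftmost geodesics, each internal geodesic is the right side of one elementary slice and the left side of the next. Thus each vertex gets its $t$ exactly once, as long as the right sides are unweighted, and the product formula holds with the same convention.

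The main obstacle is precisely this bookkeeping of the $t$-weights on shared boundary vertices, in particular the apex and the endpoint copies $l$ and $r$ of $c_0$. Once it is confirmed, the statement follows immediately by substituting $i=0$.
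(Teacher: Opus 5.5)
Your overall strategy is exactly the paper's, which obtains the statement in one line by composing the bijection of Proposition~\ref{prop:accesbij} with Proposition~\ref{prop:slicewordgf} at $i=0$. However, the weight check that you single out as the main obstacle is carried out incorrectly.

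The cut does not duplicate the apex, and the apex lies on the right side. Under the convention that right-side vertices carry no $t$, the apex is therefore simply unweighted. There is no ``left-side copy'' through which its $t$ could be retained.

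Concretely, suppose $\mathfrak m$ has $V$ vertices and the leftmost geodesic from $c_0$ to $v$ has length $L$. Then the slice has $V+L$ vertices, of which $L+1$ lie on the right side, so its $t$-weight is $t^{V-1}$ and not $t^{V}$. Your relation ``vertices of $\mathfrak m$ $=$ vertices of the slice minus right-side vertices'' is off by one.

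That the apex is weightless is forced by the data you are using:
\begin{itemize}
\item $b_{-1}=1$, and the trivial type-$\mathcal B$ slice has both its vertices on the right side;
\item $a_{-1}$ contains $t$ and not $t^2$;
\item the one-vertex slice with empty base word contributes $[z^0]1=1$.
\end{itemize}

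So what the composition actually proves is the formula for accessibly pointed hypermaps weighted by $w(\mathfrak m)/t$, i.e.\ with the distinguished vertex carrying no factor $t$. This is the convention, left implicit in the paper, under which pointing corresponds to $\partial/\partial t$; it is also what is needed for $\partial_t H_1 = A_1 H_1$.

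With the weight $t^{V}$ that you claim, the statement is false already for $k=1$, $\ell_1^\bullet=\ell_1^\circ=0$. The only such hypermap is the one-vertex map, which contributes $t$, while $[z^0]\,x(z)^0y(z)^0=1$.

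The fix is to adopt this convention on the hypermap side, rather than to argue that the apex's weight survives on the slice side. With that correction, the rest of your argument goes through.
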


Theorem~\ref{thm:ak} follows by summing over
$\ell_1^\bullet,\ell_1^\circ,\ldots,\ell_k^\bullet,\ell_k^\circ$. Let
us conclude this section by recording the following useful expression for
$A_k$ in terms of $A_1$:

\begin{cor}\label{cor:ak}
 For any $k \geq 1$ we have
\begin{equation}\label{eq:A=decompo}
    A_k(x_1,\ldots,x_k;y_1,\ldots,y_k)
    = \sum_{1 \leqslant i,j \leqslant k}
    \frac{A_1(x_i; y_j)}
    {\displaystyle \prod_{p \neq i}(x_i - x_p)
     \prod_{q \neq j}(y_j - y_q)}.
 \end{equation}
 In particular for $k=2$ we have
 \begin{equation}\label{eq:A=decompo2}
   A_2(x_1,x_2;y_1,y_2) = \frac{A_1(x_1;y_1)+A_1(x_2;y_2)-A_1(x_1;y_2)-A_1(x_2;y_1)}{(x_1-x_2)(y_1-y_2)}.
 \end{equation}
\end{cor}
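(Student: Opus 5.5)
The plan is to derive the corollary directly from the second line of~\eqref{eq:ak} in Theorem~\ref{thm:ak}, using partial fractions in the spectator variables. Set $X=x(z)$ and $Y=y(z)$. Regarded as a rational function of $X$ with distinct poles $x_1,\dots,x_k$, the standard partial fraction identity gives
\begin{equation*}
  \prod_{i=1}^k \frac{1}{x_i-X} = \sum_{i=1}^k \frac{1}{x_i-X}\prod_{p\neq i}\frac{1}{x_i-x_p}.
\end{equation*}
To check it, multiply both sides by $\prod_i(x_i-X)$. The right-hand side becomes a polynomial in $X$ of degree at most $k-1$. At each $X=x_i$ it equals $1$. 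Hence it is identically $1$.

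The same identity holds for $\prod_j (y_j-Y)^{-1}$. Multiplying the two expansions, the integrand of~\eqref{eq:ak} becomes
\begin{equation*}
  \sum_{i,j}\frac{1}{(x_i-X)(y_j-Y)}\prod_{p\neq i}\frac{1}{x_i-x_p}\prod_{q\neq j}\frac{1}{y_j-y_q}.
\end{equation*}
The coefficients do not depend on $z$, so we may take $[z^0]$ termwise. Theorem~\ref{thm:ak} for $k=1$ identifies $[z^0]\,\frac{1}{(x_i-x(z))(y_j-y(z))}$ with $A_1(x_i;y_j)$, which yields~\eqref{eq:A=decompo}. For $k=2$ the four terms carry the signs of $(x_i-x_p)(y_j-y_q)$, giving~\eqref{eq:A=decompo2}.

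The main obstacle is justifying these manipulations in the right ring. The object $1/(x_i-x(z))$ is understood as the expansion $\sum_\ell x(z)^\ell x_i^{-\ell-1}$ in $x_i^{-1}$, while $1/(x_i-x_p)$ has no canonical expansion in the ring of series in inverse variables. I would avoid this issue by working at the level of coefficients instead.

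Multiply both sides of the claimed identity by $\prod_{i<p}(x_i-x_p)\prod_{j<q}(y_j-y_q)$ (Vandermonde determinants). Both sides become honest elements of a ring of Laurent series in the $x_i^{-1}$, $y_j^{-1}$ with polynomial factors in $x_i$, $y_j$. The partial fraction identity is then a polynomial identity: for fixed $X$ it is the cofactor (Lagrange) expansion. It holds in $\R[x_1,\dots,x_k][\![X]\!]$-type expansions, and it commutes with $[z^0]$ because each coefficient of fixed total degree in the inverse variables is a finite sum.

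Equivalently, and perhaps more cleanly, one can compare the coefficient of $\prod_i x_i^{-\ell_i-1}$ on both sides. This reduces to the complete homogeneous symmetric function identity
\begin{equation*}
  \sum_{i}\frac{x_i^{n}}{\prod_{p\ne i}(x_i-x_p)} = h_{n-k+1}(x_1,\dots,x_k),
\end{equation*}
applied with $X^n$ replaced by $x(z)^n$. I would present the Vandermonde-cleared version, since it keeps everything polynomial.
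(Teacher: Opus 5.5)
Your route is the paper's own: two partial fraction decompositions, in $x(z)$ and in $y(z)$, then termwise extraction of $[z^0]$, then Theorem~\ref{thm:ak} with $k=1$. There is, however, a sign slip in your one-variable identity. Multiplying your right-hand side by $\prod_i(x_i-X)$ gives $\sum_i\prod_{p\neq i}\frac{x_p-X}{x_i-x_p}$. Its value at $X=x_i$ is $\prod_{p\neq i}\frac{x_p-x_i}{x_i-x_p}=(-1)^{k-1}$, not $1$. Already for $k=2$ your right-hand side equals $-\frac{1}{(x_1-X)(x_2-X)}$. The correct statement is
\[
  \prod_{i=1}^k\frac{1}{x_i-X}=\sum_{i=1}^k\frac{1}{x_i-X}\prod_{p\neq i}\frac{1}{x_p-x_i}.
\]
Your final formula is nevertheless right. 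The same factor $(-1)^{k-1}$ comes out of the $y$-expansion, and $(-1)^{2(k-1)}=1$. This cancellation is exactly why \eqref{eq:A=decompo} can be written with $x_i-x_p$ and $y_j-y_q$. You should state the identity correctly and make the cancellation explicit, rather than rely on two compensating errors.

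The same care is needed in your coefficientwise variant. Since $1/(x_i-x(z))=\sum_{n\geq 0}x(z)^n x_i^{-n-1}$, what actually appears are divided differences of negative powers:
\[
  \sum_{i=1}^k\frac{x_i^{-n-1}}{\prod_{p\neq i}(x_i-x_p)}=(-1)^{k-1}\Bigl(\prod_{p=1}^k x_p^{-1}\Bigr)h_n(x_1^{-1},\ldots,x_k^{-1}).
\]
This follows from your positive-power identity by substituting $u_i=1/x_i$, which again produces $(-1)^{k-1}$. The identity with $x_i^n$ and $h_{n-k+1}(x_1,\ldots,x_k)$ is not directly the one you need.

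Written this way, it also settles your ring concern more directly than clearing Vandermonde denominators. Each divided difference is a polynomial in the $x_i^{-1}$, so the right-hand side of \eqref{eq:A=decompo} is a genuine element of the ring containing $A_k$. The coefficient of $\prod_i x_i^{-\ell_i^\bullet-1}\prod_j y_j^{-\ell_j^\circ-1}$ on both sides is then $[z^0]\,x(z)^{\ell_1^\bullet+\cdots+\ell_k^\bullet}y(z)^{\ell_1^\circ+\cdots+\ell_k^\circ}$. Your attention to where the identity lives is a real improvement over the paper's one-line argument, which leaves this implicit.
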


\begin{proof}
Equation~\eqref{eq:A=decompo} follows directly from equation~\eqref{eq:ak} by performing two partial fraction decompositions with respect to the variables $x(z)$ and $y(z)$, and then applying the case $k=1$ to recover $A_1(x_i; y_j)$.
\end{proof}

Note that the first formula of Corollary~\ref{cor:ak}, made explicit in the case of accessibly pointed $2$-alternating hypermaps, plays a key role when studying $k$-alternating hypermaps, while Theorem \ref{thm:ak} itself contains the full combinatorial interpretation of 
alternating hypermaps as slices. Both formulas are therefore essential, each in its own way.

\section{Enumeration of $1$-alternating hypermaps}
\label{sec:1alt}
In this section, we shall successively detail all the equalities appearing in Proposition \ref{prop:k1}.
We begin with the combinatorial approach based on slices, in order to prove the first equation in \eqref{eq:a1h1b1rel}.
We then show how to derive an explicit expression for $H_1$, as given in Equation \eqref{eq:h1b1}, 
before returning to the case of bounded-face degrees and establishing the connection with the spectral curve.

\subsection{How to use accessibility in pointed $1$-alternating hypermaps}
\label{sec:1alt_tranche}

The purpose of this section is to establish Equation \eqref{eq:a1h1b1rel} 
in Proposition \ref{prop:k1}. 

\subsubsection{Generic case} \label{sec:h1a1}

Let us start by proving the first equality in~\eqref{eq:a1h1b1rel} which amounts to 
\begin{equation}\label{eq:dH1/dt} \frac{\partial H_1(x;y)}{\partial t}=A_1(x;y)H_1(x;y),\end{equation}
   where we recall that $H_1$ is the generating function of $1$-alternating hypermaps
 and $A_1$ that of accessibly pointed $1$-alternating hypermaps. 
\\ 

We start by the observation that $\frac{\partial H_1}{\partial t}$ counts pointed, but not necessarily 
accessibly pointed, $1$-alternating hypermaps. We shall therefore discuss what could prevent, in a pointed 
$1$-alternating hypermap $\mathfrak{m}$, the marked vertex $v$ from being accessible. By Remark \ref{rem:inwardcorner}, 
$v$ is accessible if and only if it is reachable from $c_1$. 
Now, by Lemma \ref{lem:accesscrit}, we see that the obstruction to accessibility is the existence of a bridge separating $v$ and the vertex 
incident to $c_1$. See Figure \ref{fig:k1_notations} for an illustration of this situation and Figure \ref{fig:k1_preuve} for a sketch of the proof.

\begin{figure}[h!]
\centering 
\includegraphics[width=1\textwidth]{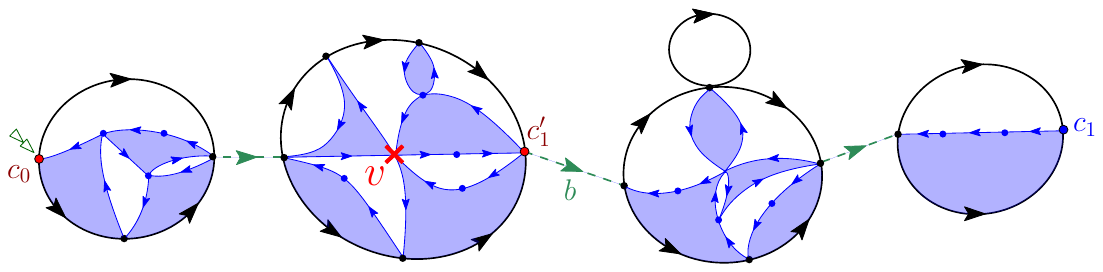} 
\caption{Sketch of a $1$-alternating hypermap. One can observe that the marked vertex $v$ is accessible from $c_0$ but not from $c_1$ because of the presence of bridges, dashed in green for readability, oriented toward $c_1$.}
\label{fig:k1_notations} 
\end{figure} 

This suggests the following decomposition of $\mathfrak{m}$ when the obstruction occurs. Among all 
the bridges which separate $v$ from $c_1$, let us denote by $b$ the one closest to $v$. 
Removing $b$ splits $\mathfrak{m}$ into two components: let us denote by $\mathfrak{m}_1$ the component 
containing $v$ and $\mathfrak{m}_2$ the one containing $c_1$. Note that $\mathfrak{m}_1$ and $\mathfrak{m}_2$ are 
both $1$-alternating hypermaps. Furthermore, the inward corner $c_1'$ of $\mathfrak{m}_1$ 
(at the position of the former bridge $b$) is by construction 
not separated from $v$ by a bridge. Hence, by Lemma \ref{lem:accesscrit} and Remark \ref{rem:inwardcorner}, $v$ is accessible 
in $\mathfrak{m}_1$, which makes $\mathfrak{m}_1$ accessibly pointed.
\\

We extend the decomposition of $\mathfrak{m}$ to the case where the obstruction does not occur by 
setting $\mathfrak{m}_1:=\mathfrak{m}$ and $\mathfrak{m}_2:=\emptyset$. This construction defines a map
$\mathfrak{m}\longmapsto(\mathfrak{m}_1,\mathfrak{m}_2)$ from the set of pointed $1$-alternating hypermaps 
to the set of pairs consisting of an accessibly pointed $1$-alternating hypermap and of a (possibly empty) 
unpointed $1$-alternating hypermap. It is clear that the mapping is bijective: when $\mathfrak{m}_2$ is not 
empty, the inverse mapping consists in connecting the inward corner of $\mathfrak{m}_1$ to the 
outward corner of $\mathfrak{m}_2$ by a bridge directed from the former 
to the latter.\\
Translating this bijection into the language of generating functions, we
get precisely Relation \eqref{eq:dH1/dt}, upon noting that the
case $\mathfrak{m}_2=\emptyset$ corresponds to the conventional term
$\delta_{k,1}$ present in the definition \eqref{eq:Hkdef} of $H_k$
(but absent in $A_1$), and that if $\mathfrak{m}_2 \neq \emptyset$
then we have
\begin{equation}
  \ell_1^\bullet(\mathfrak{m}) = \ell_1^\bullet(\mathfrak{m}_1) + \ell_1^\bullet(\mathfrak{m}_2) + 1, \qquad
  \ell_1^\circ(\mathfrak{m}) = \ell_1^\circ(\mathfrak{m}_1) + \ell_1^\circ(\mathfrak{m}_2) + 1
\end{equation}
which ensures that the exponents of $x$ and $y$ match.

  \begin{figure}[t]
    \centering
    \includegraphics[width=0.9\textwidth]{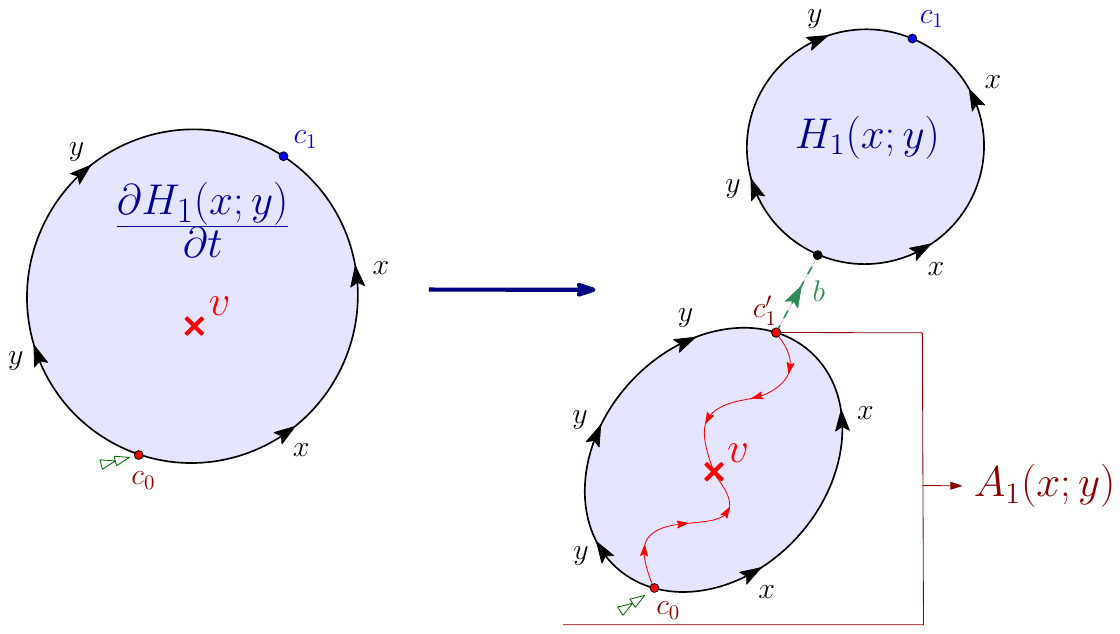} 
    \caption{Sketch of the proof for the case $k=1$, inner faces are blurred in light blue here. On the left, a generic sketch of a pointed $1$-alternating hypermap. On the right, the last bridge $b$ between $c_1$ and $v$ is dashed in green, and so $v$ is accessible from the base vertex of $b$, which leads to the desired decomposition.}
    \label{fig:k1_preuve}
\end{figure}

\begin{rem}
    Since $A_1$ has an explicit formula as a function of $x(z)$ and $y(z)$ derived from Theorem \ref{thm:ak}:
    $$ A_1(x;y)=[z^0]\frac{1}{(x-x(z))(y-y(z))}, $$ 
    we can finally use Equation \eqref{eq:dH1/dt} to write the following relation for $H_1$:
    \begin{equation}\label{eq:h1expa1}
    H_1(x;y)=\exp\left([z^0]\int_0^t \frac{1}{x-x(z,\tau)}\frac{1}{y-y(z,\tau)}\,\mathrm{d}\tau\right).
    \end{equation}
    The removal of the integral will be the main purpose of Section \ref{sec:1alt_autre_relation}.
\end{rem}

\subsubsection{Strongly connected case}\label{sec:a1b1}

Let us now establish the second equality of Equation \eqref{eq:a1h1b1rel} in Proposition \ref{prop:k1}:
\begin{equation}\label{eq:a1b1}
A_1(x;y)=\frac{\frac{\partial}{\partial t} B_1(x,y)}{1-B_1(x,y)},
\end{equation}
where $B_1$ is the generating function of strongly connected $1$-alternating hypermaps, which means maps with no bridge, in which every vertex is accessible by Lemma \ref{lem:stronglyconnected}.
\\

In fact, this relation is quite straightforward using Remark \ref{rem:stronglyconnected}, and is illustrated in Figure \ref{fig:a1b1}. 
Indeed, consider an accessibly pointed $1$-alternating hypermap $\mathfrak{m}$, and denote by $v$ its marked vertex. 
Then the set of all vertices 
accessible from $v$, i.e.\ the equivalence class of $v$ with respect to the relation $\leftrightarrow$, is a strongly 
connected $1$-alternating hypermap. We denote it by $\mathfrak{m}_1$. Note that $c_1$ belongs to $\mathfrak{m}_1$, 
otherwise there would exist a bridge in the wrong direction between $v$ and $c_1$. Since $v$ is accessible, this bridge should be oriented toward $v$, which leads to the existence of another inward corner that differs from $c_1$, and this is not possible since the hypermap is $1$-alternating.
\\
Let us now consider $c_0$. If $c_0$ is accessible from $v$, then it belongs to $\mathfrak{m}_1$, hence $v$ is accessible from any vertex and can access $c_0$ and $c_1$, so it can be related to any other vertex of the hypermap by Lemma \ref{lem:accesscrit} and Remark \ref{rem:inwardcorner}, so $\mathfrak{m}=\mathfrak{m}_1$. 
We then set $\mathfrak{m}_2:=\emptyset$ in this situation. However, if $c_0$ is not accessible from $v$, then there should exist multiple bridges, oriented from $c_0$ toward $v$, that obstruct $v$ from reaching $c_0$. 
By Lemma \ref{lem:stronglyconnected}, the component of $\mathfrak{m}$ that is between two consecutive bridges has to be a strongly connected $1$-alternating hypermap. We denote by $\mathfrak{m}_2$ the whole component of $\mathfrak{m}$ which contains all the vertices that are not accessible from $v$, which 
is a sequence of strongly connected hypermaps. 
\\
Then the map $\mathfrak{m}\longmapsto(\mathfrak{m}_1,\mathfrak{m}_2)$ from the set of accessibly pointed $1$-alternating hypermaps to couples of pointed strongly connected $1$-alternating 
hypermaps and a sequence of ordered unpointed strongly connected $1$-alternating hypermaps is bijective. The inverse map is defined as in the last section, namely by putting a bridge relating the outward corner of one strongly connected hypermap to the inward corner of the following strongly connected hypermap (assuming $\mathfrak{m}_1$ is at the end of this sequence).
Finally, using our convention on the exponents of $x$ and $y$ in alternating hypermaps, the language 
of generating functions translates this bijection into Relation \eqref{eq:a1b1}.

\begin{figure}[t]
    \centering
    \includegraphics[width=1\textwidth]{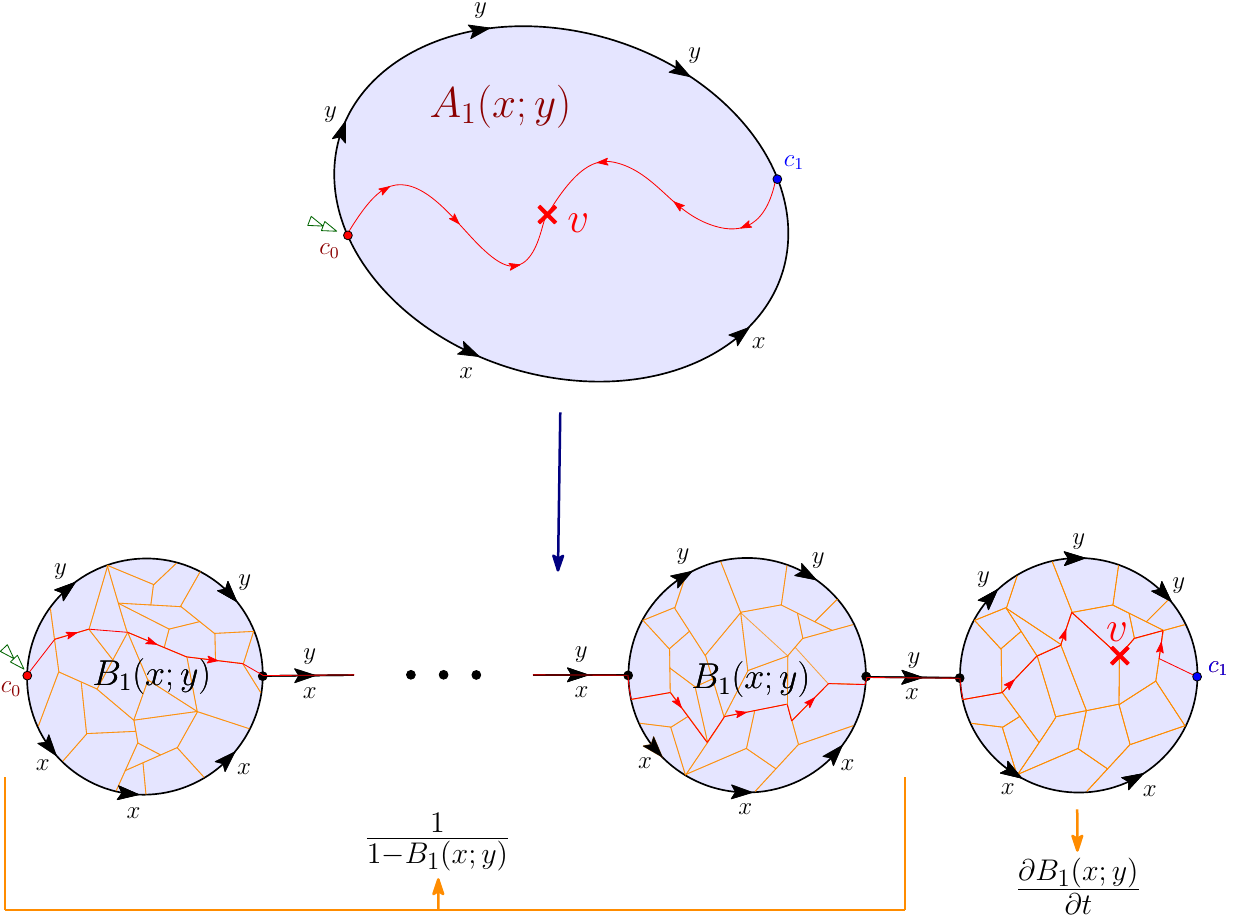} 
    \caption{Sketch of the proof of Equation \eqref{eq:a1b1}}
    \label{fig:a1b1}
\end{figure}

\begin{rem}
The present proof deals with the $1$-alternating case, which only allows bridges oriented in a single direction: from the outward corner $c_0$ to the inward corner $c_1$.  
We shall see in Section \ref{sec:b2} that discussing more general cases in this regard will be more challenging.
\end{rem}

Equations \eqref{eq:dH1/dt} and \eqref{eq:a1b1}, when combined and integrated, yield the first equality of Equation \eqref{eq:h1b1}:
\begin{equation}
H_1(x;y)=\frac{1}{1-B_1(x;y)}.
\end{equation}
A direct proof of this formula can be found in \cite{Albenque2026}, it is very similar to the above 
argument except that it does not involve a marked vertex.

\subsection{Various way to integrate relations for $1$-alternating hypermaps}
\label{sec:1alt_autre_relation}

In the previous section, we obtained various expressions for $H_1$ using the notion of accessibility.  
In particular, we made explicit Relation \eqref{eq:h1expa1}, which will now allow us, in this section, to derive new relations for $H_1$ with the aim of eliminating the integral with respect to the variable counting the number of vertices.  

To this end, we shall introduce a simpler case of hypermaps than those previously considered, namely hypermaps with a monochromatic boundary.  
We start by studying this type of hypermaps, showing how they are strongly related to the slices defined earlier.  
This will enable us to prove a Poisson formula for $x(z,t)$ and $y(z,t)$ in Lemma \ref{lem:poisson}, before providing proofs for Equations \eqref{eq:h1b1} and \eqref{eq:a1E}, which remain to be established.

\subsubsection{Review of the monochromatic boundary case}\label{sec:monochromatic}

In order to present a new relation between generating function of slices and of $H_1$ without any integral on $t$, we need to introduce a few notations by recalling what has already been done for the monochromatic boundary case.

\begin{Def}\label{def:monochromatic}
  Let $\mathfrak{M}^{\circ}$ (resp. $\mathfrak{M}^{\bullet}$) be the 
  set of $1$-alternating hypermaps $\mathfrak{m}$ such that $c_0$ and $c_1$ coincide and every 
  edge on the boundary is oriented counterclockwise (resp. clockwise). This is equivalent to saying that $\ell^{\circ}(\mathfrak{m})=0$ (resp. $\ell^{\bullet}(\mathfrak{m})=0$). 
  Each hypermap belonging to one of these two sets is said to have a \emph{monochromatic boundary}, respectively \emph{white} and \emph{black}, see Figure \ref{fig:monochromatic} for an illustration.
  \\
  One defines the corresponding generating functions as
  \begin{equation}
  W^{\circ}(x):=\sum_{\mathfrak{m}\in\mathfrak{M}^{\circ}}\frac{w(\mathfrak{m})}{x^{\ell^{\bullet}(\mathfrak{m})+1}}\in x^{-1}\R[\![x^{-1}]\!]\,\,\text{and}\,\,  W^{\bullet}(y):=\sum_{\mathfrak{m}\in\mathfrak{M}^{\bullet}}\frac{w(\mathfrak{m})}{y^{\ell^{\circ}(\mathfrak{m})+1}}\in y^{-1}\R[\![y^{-1}]\!]
  \end{equation}
  which are respectively exactly the coefficients $[y^{-1}]H_1(x;y)$ and $[x^{-1}]H_1(x;y)$ by definition. The weights $w$ are defined in Equation \eqref{eq:wmdef}.
\end{Def}

\begin{rem}
This definition is equivalent to taking a planar hypermap with one white (resp. black) marked face as boundary in order to define a hypermap with a white (resp. black) boundary.
\end{rem}

\begin{figure}[h!]
    \centering
    \begin{subfigure}{0.5\textwidth}
        \includegraphics[width=\textwidth]{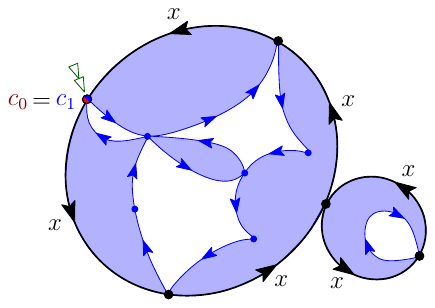}
        \caption{\centering a hypermap with white monochromatic boundary contributing to $W^{\circ}(x)$.}
    \end{subfigure}
    \begin{subfigure}{0.45\textwidth}
        \includegraphics[width=\textwidth]{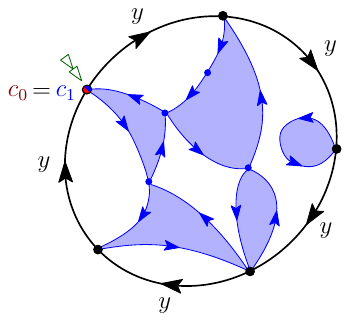}
        \caption{\centering a hypermap with black monochromatic boundary contributing to $W^{\bullet}(y)$.}
    \end{subfigure}
    \caption{Examples of hypermaps with monochromatic boundaries. Note that both marked corners are at the same position and that all faces incident to the boundary have the same color, hence the denomination "monochromatic".}
  \label{fig:monochromatic}
\end{figure}

Our first equation on these generating functions can shortly be found by a slice decomposition. 

\begin{prop}[{\cite[Corollary 3.5]{Albenque2026}}]\label{prop:dW/dt}
The generating functions of pointed hypermaps with monochromatic boundaries can be expressed as follows:
\begin{equation}\label{eq:dW/dt}
    \dpart{W^{\circ}}{t}(x,t)=[z^0]\frac{1}{x-x(z,t)}\,\,\,\text{and}\,\,\,\dpart{W^{\bullet}}{t}(y,t)=[z^0]\frac{1}{y-y(z,t)}.
\end{equation}

\end{prop}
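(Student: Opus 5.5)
The plan is to deduce these formulas from the bijections already established, rather than redo a slice decomposition from scratch. By Definition~\ref{def:monochromatic}, $W^{\circ}(x)=[y^{-1}]H_1(x;y)$. Hence $\partial_t W^{\circ}$ counts pointed $1$-alternating hypermaps with $\ell^\circ=0$.

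The first step is to show that every such pointed hypermap is automatically accessibly pointed. Since $\ell^\circ=0$, the corners $c_0$ and $c_1$ coincide, and the whole boundary is a directed counterclockwise cycle. The outer face then contains no bridge: a bridge would be traversed in both directions along the outer contour, which is impossible for a directed contour. By Lemma~\ref{lem:stronglyconnected} the hypermap is strongly connected, so every vertex is accessible. Consequently $\partial_t W^{\circ}(x)=[y^{-1}]A_1(x;y)$. In the decomposition of Section~\ref{sec:h1a1}, the case $\mathfrak m_2\neq\emptyset$ adds a $y$-exponent, and the conventional term $\delta_{k,1}$ is constant in $t$, so neither interferes.

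The second step is to apply Theorem~\ref{thm:ak} with $k=1$ and extract the coefficient of $y^{-1}$, i.e.\ set $\ell_1^\circ=0$. This gives
\[
\partial_t W^{\circ}(x)=\sum_{\ell\ge0}\frac{[z^0]x(z)^{\ell}}{x^{\ell+1}}=[z^0]\frac{1}{x-x(z)}.
\]
Alternatively, one can invoke Proposition~\ref{cor:word} with base word $a^{\ell}$: slices of type $\mathcal A$ with equal side lengths are counted by $[z^0]x(z)^\ell$. The black case is symmetric. With $\ell^\bullet=0$ the boundary is a clockwise directed cycle, hence again bridgeless, and extracting $[x^{-1}]$ with base word $b^{\ell}$ gives $[z^0]\frac{1}{y-y(z)}$.

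The main obstacle is the bookkeeping rather than any geometry. One must check that the convention $x^{-(\ell+1)}$ matches the series $\frac{1}{x-x(z)}=\sum_\ell x(z)^\ell x^{-\ell-1}$, which holds in the formal sense since $x(z)$ has no $x$-dependence. One must also confirm that the degenerate map reduced to a single vertex is handled consistently. That map corresponds to $\ell=0$, contributes $t$ to $W^\circ$, and gives $[z^0]x(z)^0=1$ after differentiation, so it matches.
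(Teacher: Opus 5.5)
Your proof is correct and is essentially the paper's argument. Since the outer contour is a directed cycle, no bridge can occur, so by Lemma~\ref{lem:stronglyconnected} every pointed hypermap with a monochromatic boundary is accessibly pointed, and the formula follows from the slice count of Proposition~\ref{cor:word} with base word $a^{\ell}$ (resp.\ $b^{\ell}$), which you equivalently obtain by extracting $[y^{-1}]$ (resp.\ $[x^{-1}]$) from Theorem~\ref{thm:ak} at $k=1$; your check of the one-vertex map also confirms the implicit convention that the marked vertex (the apex) carries no weight $t$, consistent with differentiating in $t$.
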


The proof of this proposition is based on slice decomposition 
and one can sketch it as follows: since we are 
in the plane case and the boundary is monochromatic, 
no bridge can appear, and so each pointed hypermap with a monochromatic boundary is accessibly pointed. Hence they can be decomposed as a sequence of slices of type $\mathcal{A}$ or $\mathcal{B}$ and the result follows from Proposition \ref{cor:word}.
\\
In order to integrate Equation \eqref{eq:dW/dt} with respect to $t$, one could think of performing a partial fraction decomposition of the denominator of the right-hand side. 
To do so, we surely need to find the zeros of the denominator, which motivates the following result.

\begin{prop}[{{see also \cite[Section 5]{Albenque2026}}}]\label{defth:z0}
The equations $x(z)=x$ (resp. $y(z)=y$) admits exactly one solution $z^{\circ}(x)$ (resp. $z^{\bullet}(y)$) such that $z^{\circ}(x)^{-1}$ belongs to $x^{-1}\R[\![x^{-1}]\!]$ (resp. $z^{\bullet}(y)$ belongs to $y^{-1}\R[\![y^{-1}]\!]$). In fact, $z^{\circ}(x)$ (resp. $z^{\bullet}(y)$) also verifies the equation $z^{\circ}(x(z))=z$ (resp. $z^{\bullet}(y(z))=z$).\\
Furthermore, in the case of \emph{bounded face degrees}, the Laurent power series $Y(x)$ and $X(y)$ defined in Proposition \ref{prop:a1E} are given by
\begin{equation}\label{eq:XyYx}
Y(x)=y(z^{\circ}(x))\,\,\,\text{and}\,\,\,X(y)=x(z^{\bullet}(y)).
\end{equation}
\end{prop}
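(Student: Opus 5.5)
We will prove the statement in three steps: existence and uniqueness of $z^{\circ}(x)$ by a formal fixed-point argument, the identity $z^{\circ}(x(z))=z$ by uniqueness, and \eqref{eq:XyYx} by showing that $y(z^{\circ}(x))$ is a root of $E(x,\cdot)$ of the required form. We treat $z^{\circ}$ in detail; $z^{\bullet}$ is symmetric, exchanging $z\leftrightarrow z^{-1}$ and $a_i\leftrightarrow b_i$.

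\textbf{Existence and uniqueness.} Write $u:=z^{-1}$. The equation $x(z)=x$ becomes
\[
a_{-1}u^{-1}+a_0+a_1u+a_2u^2+\cdots=x .
\]
Since $a_{-1}=t+(\text{higher order terms})$, we have $a_{-1}=t\,(1+\epsilon)$ with $\epsilon$ in the ideal generated by the face variables, so $a_{-1}$ is invertible in $t\R$ up to that factor $t$. Multiplying by $u/x$, the equation is equivalent to
\[
u = x^{-1}\bigl(a_{-1}+a_0u+a_1u^2+\cdots\bigr).
\]
This has the form $u=x^{-1}\Phi(u)$, with $\Phi$ a formal power series in $u$ over $\R$. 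Lagrange inversion, or simply the contraction principle in the $x^{-1}$-adic topology, then gives exactly one solution $u\in x^{-1}\R[\![x^{-1}]\!]$. Concretely, $u=a_{-1}x^{-1}+O(x^{-2})$, with each coefficient determined recursively. We set $z^{\circ}(x):=u^{-1}$; this makes sense because $a_{-1}\neq 0$, so that $z^{\circ}(x)=x/a_{-1}+\cdots$. Uniqueness holds within this class, since any other solution $u$ in $x^{-1}\R[\![x^{-1}]\!]$ satisfies the same contracting fixed-point equation.

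\textbf{The identity $z^{\circ}(x(z))=z$.} We substitute $x=x(z)$. Since $x(z)^{-1}=z^{-1}a_{-1}^{-1}(1+\cdots)^{-1}$, the series $x(z)^{-1}$ lies in $z^{-1}\R[\![z^{-1}]\!]$, so the substitution is a legitimate composition of formal series. The element $u=z^{-1}$ trivially solves $x(u^{-1})=x(z)$ and lies in $x(z)^{-1}\R[\![x(z)^{-1}]\!]$, because $z^{-1}$ is a power series in $x(z)^{-1}$ by the previous step, that is, by inverting $x(z)^{-1}=z^{-1}a_{-1}^{-1}(1+\cdots)$. Uniqueness then gives $z^{\circ}(x(z))=z$.

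\textbf{The relations \eqref{eq:XyYx}.} Now assume bounded face degrees, so that $x(z)$ and $y(z)$ are Laurent polynomials; this follows from \eqref{eq:aibieq}, since $y^{d-1}$ then has bounded degree. The resultant $E(x,y)=\mathrm{Res}_z\bigl(z^{m}(x(z)-x),\,z(y(z)-y)\bigr)$, suitably normalized, vanishes precisely when the two polynomials have a common root. Since $z=z^{\circ}(x)$ is a root of the first, we get $E\bigl(x,y(z^{\circ}(x))\bigr)=0$. Moreover $y(z^{\circ}(x))=b_{-1}z^{\circ}(x)^{-1}+b_0+b_1z^{\circ}(x)+\cdots$ is a Laurent series in $x^{-1}$, since $z^{\circ}(x)=x/a_{-1}+O(1)$, with leading term a power of $x$.

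\textbf{Main obstacle.} The delicate step is the uniqueness clause for $Y(x)$: $E(x,\cdot)$ has several roots, the other ones corresponding to the other branches $z$ of $x(z)=x$, which behave like $x^{-1/\cdot}$ Puiseux series or have $z\to 0$. To identify the right branch, we will compare the leading behaviour at $x\to\infty$. The other roots $z_j$ of $x(z)=x$ tend to $0$ like $x^{-1/m}$ (with $m$ the largest index $i$ such that $a_i\neq 0$), so the values $y(z_j)$ grow like fractional powers or are not Laurent series in $x^{-1}$. Only $z^{\circ}$ yields a genuine Laurent series in $x^{-1}$. We will make this precise with a Newton polygon argument on $E(x,y)$, which shows that exactly one root is a Laurent series in $x^{-1}$ of integer order. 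Combined with the previous step, this gives $Y(x)=y(z^{\circ}(x))$, and symmetrically $X(y)=x(z^{\bullet}(y))$.
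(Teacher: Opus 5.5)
Your three steps follow the paper's proof. Your fixed-point equation $u=x^{-1}\Phi(u)$ is exactly the paper's Lagrange inversion for $x(z^{-1})=x$. For \eqref{eq:XyYx} the paper likewise substitutes $z=z^{\circ}(x)$ into $E(x(z),y(z))=0$, checks that $y(z^{\circ}(x))$ is a Laurent series in $x^{-1}$, and concludes by uniqueness. The paper does \emph{not} prove that uniqueness; it takes it from the definition of $Y$ and $X$ in Proposition~\ref{prop:a1E}.

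The gap is in your ``main obstacle'' paragraph, where you try to prove it. The claim that exactly one root of $E(x,\cdot)$ is a Laurent series in $x^{-1}$ of integer order is false when $d^{\bullet}=2$ (your $m=1$). Then $x(z)=a_{-1}z+a_0+t_2^{\bullet}z^{-1}$, and the second root $z_1^{\circ}(x)=t_2^{\bullet}/(a_{-1}z^{\circ}(x))=t_2^{\bullet}x^{-1}(1+O(x^{-1}))$ is unramified. So $y(z_1^{\circ}(x))=x/t_2^{\bullet}+O(1)$ is a second integer-order Laurent series root. Concretely, with no white faces one has $y(z)=z^{-1}$ and $E\propto t_2^{\bullet}y^2-(x-t_1^{\bullet})y+t$. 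Its two roots, $Y(x)=t/x+\cdots$ and $(x-t_1^{\bullet})/t_2^{\bullet}-Y(x)$, both lie in $\K(\!(x^{-1})\!)$. A Newton polygon argument, which only sees exponents, cannot separate them. The same happens for $X$ when $d^{\circ}=2$. For $d^{\bullet}\geq 3$ the other values $y(z_j^{\circ}(x))\sim c\,x^{1/(d^{\bullet}-1)}$ are genuinely ramified, so your argument is fine there.

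What singles out $Y$ is its coefficient ring. So uniqueness must be understood in $\R(\!(x^{-1})\!)$, and you then need $y(z^{\circ}(x))\in\R(\!(x^{-1})\!)$. Your justification ``$z^{\circ}(x)=x/a_{-1}+O(1)$'' does not give this: it introduces powers of $a_{-1}^{-1}$, and $a_{-1}\in t\R$ is not invertible. For the same reason, $x(z)^{-1}$ and $z^{\circ}(x)$ live over $\K$, not $\R$, in your composition step. That is harmless there, since the fixed-point uniqueness works over $\K$. Instead, use $b_j=\sum_d t_d^{\circ}[z^j]x(z)^{d-1}$ for $j\geq 0$ to write
\[
y(z^{\circ}(x))=z^{\circ}(x)^{-1}+\sum_{d\geq1}t_d^{\circ}\Bigl(x^{d-1}-\bigl([z^{<0}]x(z)^{d-1}\bigr)_{|z=z^{\circ}(x)}\Bigr)\in V_{\circ}'(x)+x^{-1}\R[\![x^{-1}]\!],
\]
whereas the spurious root has leading coefficient $1/t_2^{\bullet}\notin\R$. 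Alternatively, drop that paragraph and invoke the uniqueness built into the definition of $Y$, exactly as the paper does.
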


\begin{proof}
$\hookrightarrow$ We will only discuss the case of $y(z)=y$; the same discussion can 
be done for $x(z)=x$ by solving $x(z^{-1})=x$. Recall that 
$y(z)\in z^{-1}\R[\![z]\!]$ so one can write 
the equation as $z=\frac{\phi(z)}{y}$ where 
$\phi(z):=zy(z)\in\R[\![z]\!]$, such that 
$[z^0]\phi(z)=b_{-1}=1\neq0$. Hence the Lagrange's inversion
 theorem gives the existence and uniqueness of a compositional inverse $z^{\bullet}(y)\in y^{-1}\R[\![y^{-1}]\!]$ of $y(z)$. Let us emphasize that we do not need to extend $\R$ to a field in order to enable $z^{\bullet}(y)$ to have coefficients in $\R$.
\\
$\hookrightarrow$ The equations \eqref{eq:XyYx} come from the unicity of $Y(x)$ and $X(y)$ as Laurent power series in $x^{-1}$ and $y^{-1}$ respectively that satisfy $E(x,Y(x))=0$ and $E(X(y),y)=0$. Using the characterization of $E$ as a resultant, 
meaning such that $E(x(z),y(z))=0$, one only needs to take $z=z^{\circ}(x)$ (resp. $z=z^{\bullet}(y)$) and check that the corresponding functions $y(z^{\circ}(x))$ and $x(z^{\bullet}(y))$ are Laurent power series in $x^{-1}$ and $y^{-1}$ respectively to conclude.
\end{proof}

We are now ready to find an algebraic expression for $W^{\circ}(x)$ and $W^{\bullet}(y)$.

\begin{prop}[\cite{Eynard2002,Eynard2003,Albenque2026}]\label{prop:monochromatic}
    The notations of the previous definitions lead to the following expressions for the generating functions of monochromatic boundary hypermaps:
    \begin{equation}\label{eq:monochromatic}
        W^{\circ}(x)=Y(x)-V_{\circ}'(x)\,\,\,\text{and}\,\,\,W^{\bullet}(y)=X(y)-V_{\bullet}'(y),
    \end{equation}
    where $V_{\circ}$ and $V_{\bullet}$ are defined as
    \begin{equation}
         V_{\circ}(x)=\sum_{i\geqslant1}t_i^{\circ}\frac{x^i}{i}\,\,\,\text{and}\,\,\,V_{\bullet}(y)=\sum_{i\geqslant1}t_i^{\bullet}\frac{y^i}{i}.
    \end{equation}
\end{prop}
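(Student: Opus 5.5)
The plan is to prove the identity for $W^{\circ}$; the one for $W^{\bullet}$ follows by the symmetric argument, exchanging $x\leftrightarrow y$, $z\leftrightarrow z^{-1}$ and white $\leftrightarrow$ black. Since both sides of \eqref{eq:monochromatic} are formal power series in $t$, it suffices to show two things: their $t$-derivatives agree, and they agree at $t=0$. We will work with general (not necessarily bounded) degrees, interpreting $Y(x)$ as $y(z^{\circ}(x))$ via Proposition~\ref{defth:z0}.

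\emph{Step 1: the initial condition.} At $t=0$ there are no vertices, so $W^{\circ}=0$. On the other side, $a_{-1}=t\delta_{i,-1}+\ldots$, and the recursion \eqref{eq:aibieq} with $t=0$ forces every $a_i$ to vanish. Indeed, each term of $a_i$ involves a positive power of $t$, because every slice has at least one vertex outside its right side. Hence $x(z)|_{t=0}=0$ and $b_i|_{t=0}=t^\circ_{i+1}\,[\text{const}]$. More carefully, $z^{\circ}(x)$ degenerates at $t=0$, so I would rather check the identity directly through the expansion in $x^{-1}$. The term $-V_\circ'(x)$ cancels the polynomial part of $y(z^\circ(x))$, since $y(z)=z^{-1}+\sum_i b_i z^i$ and $b_i=\sum_d t^\circ_d[z^i]x(z)^{d-1}$. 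The precise claim to prove is that the polynomial part in $x$ of $y(z^{\circ}(x))$ equals $V_\circ'(x)=\sum_d t_d^\circ x^{d-1}$. This follows from $y(z)=z^{-1}+[z^{\ge0}]V_\circ'(x(z))$ together with the substitution $x(z)=x$, where $z^{\circ}(x)\sim x/t$ is large. The condition that $Y(x)-V'_\circ(x)\in x^{-1}\R[\![x^{-1}]\!]$ is thus a Lagrange/residue argument, and it is the step I expect to be most delicate.

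\emph{Step 2: matching the $t$-derivatives.} By Proposition~\ref{prop:dW/dt}, $\partial_t W^\circ=[z^0]\frac{1}{x-x(z)}$. For the other side, $\partial_t V'_\circ=0$ and $\partial_t\big(y(z^\circ(x))\big)=\dot y(z^\circ)+y'(z^\circ)\,\partial_t z^\circ$ with $\partial_t z^\circ=-\dot x(z^\circ)/x'(z^\circ)$, where the dot denotes $\partial_t$ at fixed $z$. So we must show
\[
[z^0]\frac{1}{x-x(z)}=\frac{\dot y\,x'-\dot x\,y'}{x'}\Big|_{z=z^\circ(x)}.
\]
I would establish this as a Poisson-bracket type identity (the "Lemma \ref{lem:poisson}" announced in the text): $\dot x\,z y'-\dot y\,z x' = 1$, or equivalently $\{x,y\}_{t,\ln z}=1$. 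To get it, I would differentiate the recursion \eqref{eq:aibieq} in $t$ and use the fact that $z y'(z)$ and $zx'(z)$ express the positive/negative parts of $V'(\cdot)$-compositions, so that the negative part of $z y' \dot x$ and the positive part of $z x'\dot y$ combine to give exactly the constant $1$ coming from $\partial_t a_{-1}$ contributions. Given this bracket, the right-hand side becomes $-\frac{1}{z x'(z)}\big|_{z=z^\circ(x)}$ up to sign. On the other hand, $[z^0]\frac{1}{x-x(z)}$ is computed as a residue $\oint\frac{dz}{z(x-x(z))}$, picking the single root $z^\circ(x)$, which gives $\frac{-1}{z x'(z)}|_{z^\circ}$. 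The two sides then match, and combining with Step 1 concludes the proof.

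The main obstacle is proving the Poisson identity from \eqref{eq:aibieq} together with the correct bookkeeping of signs and of which root is enclosed in the formal residue. I would first try it by differentiating $y(z)=z^{-1}+[z^{\ge0}]V'_\circ(x(z))$ and $x(z)=tz+[z^{\le0}]V'_\bullet(y(z))$ in $t$.
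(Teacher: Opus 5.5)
Your skeleton matches the paper's Appendix~\ref{appendix}: Proposition~\ref{prop:dW/dt}, extraction of $[z^0]\frac{1}{x-x(z)}$ through the root $z^\circ(x)$, chain rule plus Poisson bracket for $\partial_tY$, and an integration constant fixed via $y(z)=z^{-1}+[z^{\geq0}]V_\circ'(x(z))$. \textbf{The genuine gap is Step~1.}

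Your claims that all $a_i$ vanish at $t=0$ and that every slice has a vertex off its right side are false. Take an elementary $\mathcal{A}$-slice whose base leaves the apex into a black $d$-gon, the other $d-1$ edges of which form the right side. It carries no weight $t$, so $t_d^\bullet$ appears in $[t^0]a_{d-1}$ and $x(z)|_{t=0}\neq0$.

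More seriously, your fallback target $Y-V_\circ'\in x^{-1}\R[\![x^{-1}]\!]$ does not determine the integration constant. Together with Step~2 it only shows that $W^\circ-Y+V_\circ'$ is a $t$-independent element of $x^{-1}\R[\![x^{-1}]\!]$, not that it vanishes. You need $[t^0](Y-V_\circ')=0$. The missing input is $[t^0]z^\circ(x)^{-1}=0$, which holds because $a_{-1}\in t\R$: in an $\mathcal{A}$-slice of increment $-1$ the corner $l$ is not the apex, hence off the right side and weighted. Therefore $z^\circ(x)^{-1}\in t\,x^{-1}\R[\![x^{-1}]\!]$. Substituting $z=z^\circ(x)$ in your identity gives
\begin{equation*}
Y(x)-V_\circ'(x)=z^\circ(x)^{-1}-\Big([z^{<0}]V_\circ'(x(z))\Big)_{|z=z^\circ(x)},
\end{equation*}
a series without constant term in $z^\circ(x)^{-1}$. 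This yields both membership in $x^{-1}\R[\![x^{-1}]\!]$ and vanishing at $t=0$; it is exactly the computation closing the paper's appendix.

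The black case is not literally symmetric, because $b_{-1}=1$ while the $t$ sits in $a_{-1}$. There
$X(y)-V_\bullet'(y)=t\,z^\bullet(y)-\big([z^{\geq2}]V_\bullet'(y(z))\big)_{|z=z^\bullet(y)}$.
The vanishing at $t=0$ comes from the explicit factor $t$ and from $b_i\in t\R$ for $i\geq1$, so that $V_\bullet'(y(z))$ has no positive powers of $z$ at $t=0$. By contrast, $z^\bullet(y)$ itself does not vanish at $t=0$.

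Where you genuinely depart from the paper is in deriving the Poisson bracket from \eqref{eq:aibieq}, and this is the right call. The paper proves Lemma~\ref{lem:poisson} using $\partial_tY=\partial_tW^\circ$, i.e.\ the statement under proof (plus Proposition~\ref{prop:dmonochromatic}), and then feeds it into the appendix through Lemma~\ref{lem:dx(z0)}. Taken as an independent proof, that argument is circular; yours would be self-contained.

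Your sketch needs repair, though:
\begin{itemize}
\item The recursion gives $x(z)=tz+[z^{\leq1}]V_\bullet'(y(z))$, not $[z^{\leq0}]$, which would force $a_{-1}=t$.
\item The bracket is $z(x'\dot y-y'\dot x)=1$, opposite to your sign.
\item Your residue also has the wrong sign. $z^\circ(x)$ is the large root, lying outside the contour on which the expansion in $x^{-1}$ holds, so $[z^0]\frac{1}{x-x(z)}=+\frac{1}{z^\circ x'(z^\circ)}$.
\end{itemize}
The two sign errors cancel. Check both on $x(z)=tz$, $y(z)=z^{-1}$, where $[z^0]\frac{1}{x-tz}=\frac1x$ and $z(x'\dot y-y'\dot x)=1$.

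The mechanism is a two-sided support argument. Let $D=z\partial_z$, a dot denote $\partial_t$, $N:=[z^{<0}]V_\circ'(x(z))$ and $P:=[z^{\geq2}]V_\bullet'(y(z))$. Then $y=z^{-1}+V_\circ'(x)-N$ and $x=tz+V_\bullet'(y)-P$. For $F:=Dx\,\dot y-Dy\,\dot x$, the terms in $V_\circ''$ (resp.\ $V_\bullet''$) cancel, giving
\begin{equation*}
F=z^{-1}\dot x+DN\,\dot x-Dx\,\dot N=-zDy+tz\,\dot y+Dy\,\dot P-DP\,\dot y .
\end{equation*}
The middle expression lies in $\R[\![z^{-1}]\!]$, since $Dx,\dot x\in\R z+\R[\![z^{-1}]\!]$ and $N\in z^{-1}\R[\![z^{-1}]\!]$. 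The right one lies in $1+z\R[\![z]\!]$, since $Dy\in-z^{-1}+z\R[\![z]\!]$, $\dot y\in\R[\![z]\!]$ (as $b_{-1}=1$) and $P\in z^2\R[\![z]\!]$. Hence $F=1$. Products of two-sided series make sense coefficientwise, or after reducing to bounded degrees via Lemma~\ref{lem:boundeddegree}. With this, your Step~2 goes through, and unlike the paper it does not need Proposition~\ref{prop:dmonochromatic}.
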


We refer the reader to \cite{Eynard2016} for a computational proof of this formula and to \cite{Albenque2026} for a combinatorial proof.
 Here we will give a third proof based on the formula stated in Proposition \ref{prop:dW/dt} and the notations of Section \ref{sec:a1E} in Appendix \ref{appendix}.

\begin{rem}
Historically, the pair $(x(z),y(z))$ was introduced in \cite{Eynard2002}
as a parametrization of the spectral curve $E(x,y)=0$, where $E(x,y)\in\R[x,y]$ naturally appears 
when solving loop equations. In fact, one of the most surprising results of \cite{Albenque2026} was the 
combinatorial interpretation of this curve by proving that $x(z)$ and $y(z)$ are also the generating functions of elementary slices. 
So we already have a lot more information on slices through the algebraic geometry perspective, which is 
another argument to state slices as a good cornerstone of the hypermap theory.
\end{rem}

In the spirit of the last proposition, another relation enables us to relate the generating functions of hypermaps 
with monochromatic boundaries and the specific roots $z^{\circ}$ and $z^{\bullet}$.
\begin{prop}[\cite{Albenque2026}]\label{prop:dmonochromatic}
The following derivative relations hold:
\begin{equation}
\begin{split}
\dpart{W^{\circ}}{t}(x,t)&=\dpart{\ln(z^{\circ})}{x}(x,t),\\
\dpart{W^{\bullet}}{t}(y,t)&=-\dpart{\ln(z^{\bullet})}{x}(y,t).
\end{split}
\end{equation}
\end{prop}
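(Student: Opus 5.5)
The plan is to start from Proposition~\ref{prop:dW/dt}, which already gives $\dpart{W^{\circ}}{t}(x,t)=[z^0]\frac{1}{x-x(z,t)}$, and to evaluate this constant term by a formal residue computation around the root $z^{\circ}(x)$ of Proposition~\ref{defth:z0}. The black case follows the same way with $y(z)$ and $z^{\bullet}(y)$. The sign change there comes from $z^{\bullet}(y)$ being small, of order $y^{-1}$, whereas $z^{\circ}(x)$ is large, of order $x/t$. So the relevant root sits on the opposite side of the annulus where $[z^0]$ is taken, and the orientation of the residue flips.

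Concretely, I would expand $\frac{1}{x-x(z)}=\sum_{n\ge0}x^{-n-1}x(z)^n$ in $x^{-1}\R[\![x^{-1}]\!]$, so that
\[\dpart{W^{\circ}}{t}=\sum_{n\ge 0}x^{-n-1}[z^0]x(z)^n=\sum_{n\ge0}x^{-n-1}\operatorname{Res}_z\, x(z)^n\frac{dz}{z}.\]
The target $\dpart{\ln z^{\circ}}{x}=\frac{1}{z^{\circ}(x)\,x'(z^{\circ}(x))}$ is, formally, $\operatorname{Res}_{z=z^{\circ}(x)}\frac{dz}{z\,(x(z)-x)}$. So the identity to prove says: the constant-term residue, taken in the regime $|z|\ll|z^{\circ}|$ where the $x^{-1}$ expansion is valid, differs from the total residue by exactly the contribution of the single root $z^{\circ}$ lying outside that regime.

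To make this rigorous formally, I would use Lagrange--Bürmann inversion in the variable $u=1/z$. Since $x(1/u)=a_{-1}u^{-1}+a_0+a_1u+\cdots$ with $a_{-1}=t+\dots$, the series $w:=1/z^{\circ}(x)$ is the compositional inverse attached to $x^{-1}=u/\phi(u)$ with $\phi(u)=u\,x(1/u)$. The Lagrange formula for $\ln$ then reads
\[[x^{-n}]\ln\frac{w}{x^{-1}}=\frac1n[u^n]\phi(u)^n=\frac1n[z^0]x(z)^n\qquad(n\ge1).\]
Hence $\ln z^{\circ}(x)=\ln x-\sum_{n\ge1}\frac{x^{-n}}{n}[z^0]x(z)^n$, where the constant $[x^0]\ln(w/x^{-1})=-\ln[z^0]\phi$ is dropped since it vanishes under $\partial_x$. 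Differentiating in $x$ gives $x^{-1}+\sum_{n\ge1}x^{-n-1}[z^0]x(z)^n$, which is exactly the series above because the $n=0$ term is $x^{-1}$.

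The main obstacle is making the logarithmic form of Lagrange inversion legitimate over $\R$. Here $[z^0]\phi=a_{-1}$ equals $t$ plus higher-order terms rather than a unit, so $\ln z^{\circ}$ is not a genuine series with coefficients in $\R$. I would handle this by working with $\partial_x\ln z^{\circ}=\partial_x z^{\circ}/z^{\circ}$ directly, which is a well-defined Laurent series, and applying the derivative form of Lagrange--Bürmann to $H(u)=\ln u$ via $H'(u)=1/u$. Alternatively, one can rescale $z\mapsto z/t$ and localize $t$, then check that the final identity involves no division by $t$.
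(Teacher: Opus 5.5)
Your proof is correct, but it follows a different route from the paper. The paper does not prove this proposition itself. It cites \cite{Albenque2026}, where the result is obtained combinatorially through bridges and excursions of walks (following Banderier--Flajolet). It then only sketches a complex-analytic argument: $[z^0]$ is written as a contour integral, and the residue theorem leaves only the contribution of the root $z^{\circ}(x)$ lying outside the contour.

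The formal counterpart in the paper is the computation of Appendix~\ref{appendix}. There, $[z^0]\frac{1}{x-x(z)}=\frac{1}{z^{\circ}(x)\,\partial_z x(z^{\circ}(x))}$ comes from a partial fraction decomposition over all roots of $x(z)=x$. This needs bounded face degrees, Puiseux expansions over $\K$, and Lemma~\ref{lem:boundeddegree} to return to general weights.

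Your Lagrange--B\"urmann computation in $u=1/z$ is the formal shadow of that residue argument. What it buys is that it stays inside formal power series, treats unbounded degrees at once, and never introduces the other roots. The residue and partial-fraction viewpoint buys something else: it is what the paper reuses, with several roots at once, in Section~\ref{sec:a1E}.

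The only delicate point is the one you flag yourself: $a_{-1}$ is not a unit. Your first fix is the cleanest once made explicit. With $s=x^{-1}$ and $w=s\phi(w)$, differentiating gives $s\,w'\,(1-s\phi'(w))=w$. Hence $s\,\partial_s\ln w=(1-s\phi'(w))^{-1}=\sum_{n\ge0}s^n[u^n]\phi(u)^n$ by the second form of Lagrange inversion, which holds for every $\phi\in\R[\![u]\!]$. This yields $\partial_x\ln z^{\circ}=\sum_{n\ge0}x^{-n-1}[z^0]x(z)^n$ without ever taking the logarithm of a non-unit. Working over $\K$, as the paper does elsewhere, would also suffice.

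Two small remarks. First, the discarded constant is $[x^0]\ln(w/x^{-1})=+\ln a_{-1}$, not $-\ln a_{-1}$; this is harmless. Second, in the black case $\psi(u)=u\,y(u)$ has $\psi(0)=b_{-1}=1$, so the logarithmic form applies over $\R$ directly to $z^{\bullet}=y^{-1}\psi(z^{\bullet})$. It gives $\ln z^{\bullet}=-\ln y+\sum_{n\ge1}\frac{y^{-n}}{n}[z^0]y(z)^n$, which produces the minus sign; the derivative in the second line of the statement should be taken with respect to $y$.
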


These expressions can be deduced from Proposition \ref{prop:dW/dt} by a complex analytic argument: we extract 
the coefficient of $z^0$ by a contour integral and apply the residue theorem. Interestingly, the combinatorial 
 approach taken in \cite{Albenque2026}, and based on \cite{BanFla2002}, is to interpret the generating function
$\dpart{W^{\circ}(x,t)}{t}$ (resp. $\dpart{W^{\bullet}(y,t)}{t}$) as the generating function 
counting walks starting and ending at zero, such that 
each step of height $i$ is associated with a weight $a_i=[z^{-i}]x(z^{-1})$ (resp. $b_i=[z^i]y(z)$).
 In this frame, the quantity $z^{\circ}(x)$ (resp. $z^{\bullet}(y)$) is the generating 
functions of excursions, or positive walks starting and ending at $0$.

\subsubsection{Poisson formula for the parametrization $(x(z),y(z))$}
Let us now examine an important formula, which first appeared in \cite{Eynard2003,Eynard2002} in the case of uncoloured planar maps, and which we shall here generalize.

\begin{lem}[Poisson formula for $x(z)$ and $y(z)$]\label{lem:poisson}
The following formula holds for the generating functions $x(z)$ and $y(z)$ of elementary slices:
\begin{equation}
\dpart{x}{z}(z,t)\,\dpart{y}{t}(z,t)-\dpart{y}{z}(z,t)\,\dpart{x}{t}(z,t)=\frac{1}{z}.
\end{equation}
\end{lem}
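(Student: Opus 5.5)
The plan is to verify the identity directly on the Laurent series, using only the defining equations~\eqref{eq:aibieq}. We write these equations in a truncated form and then compute the bracket $D:=\dpart{x}{z}\dpart{y}{t}-\dpart{y}{z}\dpart{x}{t}$ in two different ways.

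\textbf{Rewriting the defining equations.} Since $a_i=t\delta_{i,-1}+[z^{-i}]V_\bullet'(y(z))$ for $i\geq -1$, we have
\[
x(z)=tz+V_\bullet'(y(z))-P(z),
\]
where $P$ collects the terms of $V_\bullet'(y(z))$ with powers $z^{\geq 2}$. Symmetrically, since $b_{-1}=1$ and $b_i=[z^i]V_\circ'(x(z))$ for $i\geq 0$, we have
\[
y(z)=z^{-1}+V_\circ'(x(z))-N(z),
\]
where $N\in z^{-1}\R[\![z^{-1}]\!]$. Differentiating in $z$ or $t$ preserves these support properties.

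\textbf{First computation.} Substitute the expression for $x$ into $D$. The terms $V_\bullet''(y)\,\dpart{y}{z}\dpart{y}{t}$ cancel, and since $\dpart{x}{t}$ also contains the term $z$, we get
\[
D=t\,\dpart{y}{t}-\dpart{P}{z}\dpart{y}{t}-z\dpart{y}{z}+\dpart{y}{z}\dpart{P}{t}.
\]
We then read off the supports:
\begin{itemize}
\item $\dpart{y}{t}$ has only powers $z^{\geq 0}$, because $b_{-1}=1$ does not depend on $t$;
\item $\dpart{P}{z}$ has only powers $z^{\geq 1}$ and $\dpart{P}{t}$ only powers $z^{\geq 2}$;
\item $\dpart{y}{z}$ has only powers $z^{\geq -2}$;
\item $-z\dpart{y}{z}=z^{-1}+(\text{powers } z^{\geq 0})$.
\end{itemize}
Hence $D-z^{-1}$ contains only nonnegative powers of $z$.

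\textbf{Second computation.} Substitute the expression for $y$ into $D$. The terms $V_\circ''(x)\,\dpart{x}{z}\dpart{x}{t}$ cancel, and we get
\[
D=-\dpart{x}{z}\dpart{N}{t}+z^{-2}\dpart{x}{t}+\dpart{N}{z}\dpart{x}{t}.
\]
Here $\dpart{x}{z}$ has powers $z^{\leq 0}$, $\dpart{x}{t}$ has powers $z^{\leq 1}$, $\dpart{N}{t}$ has powers $z^{\leq -1}$, and $\dpart{N}{z}$ has powers $z^{\leq -2}$. So each of the three terms, and hence $D$, contains only powers $z^{\leq -1}$.

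\textbf{Conclusion.} $D-z^{-1}$ has only nonnegative powers of $z$, while $D-z^{-1}$ also has only powers $z^{\leq -1}$. Therefore $D-z^{-1}=0$, which is the claimed identity.

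\textbf{Main obstacle.} The delicate point is formal rather than algebraic. The products above multiply series infinite in $z$ with series infinite in $z^{-1}$, for instance $\dpart{P}{z}\dpart{y}{t}$ and $\dpart{x}{z}\dpart{N}{t}$. The compositions $V_\bullet'(y(z))$ and $V_\circ'(x(z))$, and their truncations, must also make sense in $\R(\!(z)\!)$-like spaces. I would justify all of this by the grading of $\R$ by the total degree in the face variables $t_d^\circ,t_d^\bullet$: for $i\geq 1$, $a_i$ and $b_i$ lie in the ideal generated by these variables, and their order grows with $i$. Consequently, each coefficient of $z^j$ in every product is a convergent sum in $\R$. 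The same estimate also justifies differentiating term by term in $z$ and $t$, and the support bookkeeping then goes through as stated.
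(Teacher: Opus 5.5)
Your proof is correct, and it takes a genuinely different route from the paper's.

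\textbf{The paper's route.} The paper writes $y(z,t)=Y(x(z,t),t)$ with $Y(x)=y(z^{\circ}(x))$ (Proposition~\ref{defth:z0}). The chain rule then collapses the bracket to $\dpart{Y}{t}(x(z,t),t)\,\dpart{x}{z}(z,t)$. It concludes using $\dpart{Y}{t}=\dpart{W^{\circ}}{t}$ (Proposition~\ref{prop:monochromatic}), $\dpart{W^{\circ}}{t}=\dpart{\ln z^{\circ}}{x}$ (Proposition~\ref{prop:dmonochromatic}), and $z^{\circ}(x(z))=z$.

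\textbf{Your route.} You never leave the defining system~\eqref{eq:aibieq}. You rewrite it as $x=tz+V_\bullet'(y)-P$ and $y=z^{-1}+V_\circ'(x)-N$, then expand the bracket both ways and compare the supports in $z$. Your support bookkeeping is right.

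\textbf{What your approach buys.}
\begin{itemize}
\item It is self-contained and needs no bounded-degree reduction.
\item It avoids the composition $Y(x(z))$, which is delicate because $a_{-1}=t+\cdots$ is not invertible in $\R$.
\item It removes a circularity in the paper. The appendix's ``third proof'' of Proposition~\ref{prop:monochromatic} goes through Lemma~\ref{lem:dx(z0)}, hence through Lemma~\ref{lem:poisson}. But the paper's proof of Lemma~\ref{lem:poisson} assumes Proposition~\ref{prop:monochromatic}. With your argument, the appendix becomes a genuinely independent proof.
\end{itemize}
The paper's route instead ties the bracket to pointed hypermaps with a white monochromatic boundary. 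That is closer in spirit to the combinatorics, though neither argument is bijective.

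\textbf{A correction to your final paragraph.} Total degree in the face variables does not make the order of $a_i$ and $b_i$ grow with $i$. Indeed, $a_i$ contains the monomial $t^\bullet_{i+1}$ (from the term $d=i+1$, using $b_{-1}=1$), and $b_i$ contains $t^i\,t^\circ_{i+1}$ (from $[z^i]x^i=a_{-1}^i$). Both have degree one.

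What is true is the following. For $i\geq 1$, $a_i$ (resp.\ $b_i$) lies in the ideal generated by the $t_d^\bullet$ (resp.\ $t_d^\circ$) with $d\geq i+1$. Likewise, for $k\geq 1$, $[z^{-k}]V_\bullet'(y(z))$ and $[z^{k}]V_\circ'(x(z))$ only involve indices $d\geq k+1$. Since a monomial involves finitely many variables, every coefficient sum you form is finite monomial by monomial. Equivalently, you can use the weighted grading $\deg t^\circ_d=\deg t^\bullet_d=d$, $\deg t=0$.

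With this fix, three things are justified: the doubly infinite products such as $\dpart{x}{z}\dpart{y}{t}$ and $V_\bullet''(y)\dpart{y}{z}\dpart{y}{t}$, the chain rule for $V_\bullet'(y(z))$ and $V_\circ'(x(z))$, and the reassociations behind your two cancellations.
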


\begin{proof}

The first step of the proof comes from Propositions \ref{defth:z0} and \ref{prop:monochromatic}, which allows us to express $y(z,t)$ as a function of $x(z,t)$. Indeed, one can write
$$
y(z,t)=y(z^{\circ}(x(z,t),t),t)=Y(x(z,t),t).
$$  
By differentiating this relation using the chain rule, we obtain the two equations:
\begin{align*}
\dpart{y}{z}(z,t)&=\dpart{x}{z}(z,t)\cdot \dpart{Y}{x}(x(z,t),t),\\
\dpart{y}{t}(z,t)&=\dpart{x}{t}(z,t)\cdot \dpart{Y}{x}(x(z,t),t)+\dpart{Y}{t}(x(z,t),t).
\end{align*}

Substituting these two $y$-derivatives into the Poisson bracket $\dpart{x}{z}(z,t)\,\dpart{y}{t}(z,t)-\dpart{y}{z}(z,t)\,\dpart{x}{t}(z,t)$, which we wish to simplify, gives:
$$
\dpart{x}{z}(z,t)\,\dpart{y}{t}(z,t)-\dpart{y}{z}(z,t)\,\dpart{x}{t}(z,t)
=\dpart{Y}{t}(x(z,t),t)\cdot\dpart{x}{z}(z,t).
$$

Recalling that $\dpart{Y}{t}(x,t)=\dpart{W^{\circ}}{t}(x,t)$, we can simplify the right-hand side using Proposition \ref{prop:dmonochromatic}, which provides a formula for this partial derivative. We thus find:
$$
\dpart{Y}{t}(x(z,t),t)=\dpart{W^{\circ}}{t}(x(z,t),t)=\frac{\dpart{z^{\circ}}{x}(x(z,t),t)}{z}.
$$
Using the relation $z^{\circ}(x(z,t),t)=z$ from Proposition \ref{defth:z0} and differentiating it with respect to $z$ in order to obtain $\dpart{z^{\circ}}{x}(x(z,t),t)=\frac{1}{\dpart{x}{z}(z,t)}$,  
we finally get
$$
\dpart{x}{z}(z,t)\,\dpart{y}{t}(z,t)-\dpart{y}{z}(z,t)\,\dpart{x}{t}(z,t)
=\frac{\dpart{z^{\circ}}{x}(x(z,t),t)}{z^{\circ}(x(z,t),t)}\,\cdot\dpart{x}{z}(z,t)=\frac{1}{z}.
$$
This concludes the proof.
\end{proof}

It would be interesting to have a bijective proof of Lemma \ref{lem:poisson}.

\subsubsection{Application: an expression of $A_1$ as a $t$-derivative}

Now that we have all the necessary tools at our disposal, it remains only to combine them in such a way as to remove the integral with respect to $t$ in Equation \eqref{eq:dH1/dt}.  
This is because it is difficult to interpret the algebraic nature of primitives, with respect to $t$, of our generating functions.  
We shall therefore remedy this by showing that $A_1(x;y,t)$ can be expressed as a derivative with respect to $t$.  
\begin{prop}\label{prop:a1dt}
The generating function of accessibly 
pointed $1$-alternating hypermaps can be written as
\begin{equation}
A_1(x;y,t)=\frac{\partial}{\partial t}[z^0]\,z\ln\left(1-\frac{y(z,t)}{y}\right)\frac{\partial}{\partial z}\ln\left(1-\frac{x(z,t)}{x}\right).
\end{equation}
\end{prop}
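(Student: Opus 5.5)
We compute the $t$-derivative of the right-hand side directly and reduce it to $A_1(x;y,t)=[z^0]\frac{1}{(x-x(z,t))(y-y(z,t))}$ using the Poisson formula of Lemma~\ref{lem:poisson}. Set
\[
F(z,t):=z\,L_y\,\partial_z L_x,\qquad L_x:=\ln\Bigl(1-\tfrac{x(z,t)}{x}\Bigr),\quad L_y:=\ln\Bigl(1-\tfrac{y(z,t)}{y}\Bigr).
\]
Both logarithms are well defined as formal series in $x^{-1}$ and $y^{-1}$ with coefficients Laurent series in $z$. Since $[z^0]$ commutes with $\partial_t$, we need $[z^0]\partial_t F$. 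The product rule gives
\[
\partial_t F = z\,\partial_t L_y\,\partial_z L_x + z\,L_y\,\partial_z\partial_t L_x .
\]

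For the second term we integrate by parts in $z$. The key observation is that $[z^0]\bigl(z\,\partial_z G\bigr)=[z^{0}]\,\partial_z(zG)-[z^0]G$, and $[z^0]\partial_z(\cdot)$ applied to any Laurent series is $0$, since $[z^{-1}]$ of a derivative vanishes. Hence $[z^0]\bigl(z L_y\,\partial_z\partial_tL_x\bigr)=-[z^0]\bigl(\partial_z(zL_y)\,\partial_tL_x\bigr)$. Writing $\partial_z(zL_y)=L_y+z\,\partial_zL_y$, we get
\[
[z^0]\partial_tF=[z^0]\Bigl(z\,\partial_tL_y\,\partial_zL_x - z\,\partial_zL_y\,\partial_tL_x\Bigr)-[z^0]\bigl(L_y\,\partial_tL_x\bigr).
\]
Now
\[
\partial_tL_x=-\frac{\partial_t x}{x-x(z)},\qquad \partial_zL_x=-\frac{\partial_zx}{x-x(z)},
\]
and similarly for $y$. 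The bracket therefore equals
\[
\frac{z\bigl(\partial_zx\,\partial_ty-\partial_zy\,\partial_tx\bigr)}{(x-x(z))(y-y(z))}=\frac{1}{(x-x(z))(y-y(z))}
\]
by Lemma~\ref{lem:poisson}. Its $[z^0]$ coefficient is exactly $A_1(x;y,t)$ by Theorem~\ref{thm:ak}.

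The main obstacle is showing that the leftover term $[z^0]\bigl(L_y\,\partial_tL_x\bigr)$ vanishes. The plan is a degree argument in $z$. Expanding in $x^{-1}$, $y^{-1}$, the factor $\partial_tL_x=-\sum_{n\ge1}x^{-n}x(z)^{n-1}\partial_tx(z)$ involves only $\partial_t x(z)$ times powers of $x(z)$. Since $x(z)=tz+\sum_{i\ge0}a_iz^{-i}$ and only $a_{-1}=t$ has a $z$-positive contribution, $\partial_t x(z)$ is "$z$ plus a series in $z^{-1}$". Hence the expansion is not a priori one-sided. I would first check whether $\partial_tx(z)$ can be rewritten via the Poisson formula to simplify this. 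If not, I would use a symmetric version of the integration by parts: split $\partial_tF$ symmetrically between $L_x$ and $L_y$, i.e.\ note that $[z^0]\bigl(zL_y\partial_zL_x\bigr)=-[z^0]\bigl(L_xL_y\bigr)-[z^0]\bigl(zL_x\partial_zL_y\bigr)$. Differentiating this identity yields the term $[z^0]\bigl(L_y\partial_tL_x+L_x\partial_tL_y\bigr)$, which should combine with the leftover. Concretely, I would average the two forms of $F$ so that the leftover becomes $-\tfrac12\,\partial_t[z^0]\bigl(L_xL_y\bigr)$ plus a term whose $[z^0]$ is computable. Then I would show $[z^0]\bigl(L_xL_y\bigr)$ is independent of $t$, or zero, by the same degree/residue reasoning as in Proposition~\ref{prop:dmonochromatic} (computing the coefficient as a residue at the roots $z^\circ(x)$, $z^\bullet(y)$). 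Settling this bookkeeping of the non-Poisson term is where I expect the real work to lie.
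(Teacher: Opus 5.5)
\paragraph{The integration-by-parts step is wrong.} You justify it by saying that $[z^0]\partial_z(\cdot)$ vanishes on Laurent series, because $[z^{-1}]$ of a derivative vanishes. These are different coefficients: in fact $[z^0]\partial_z H=[z^1]H$, which is nonzero in general. The ``leftover'' $-[z^0]\bigl(L_y\,\partial_tL_x\bigr)$ that you then try to eliminate is produced by this error, and it is not zero. Take the specialization where all $t_i^\circ,t_j^\bullet$ vanish, so that $x(z)=tz$ and $y(z)=z^{-1}$. There $[z^0]\bigl(L_y\,\partial_tL_x\bigr)=\sum_{n\geq1}\frac{t^{n-1}}{n\,x^ny^n}$, whereas both sides of the proposition equal $\sum_{n\geq0}t^n(xy)^{-n-1}$. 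So your intermediate formula for $[z^0]\partial_tF$ is false, and showing that the leftover vanishes cannot succeed. Your symmetrized identity has the same flaw: the correct version has no $[z^0](L_xL_y)$ term. Moreover, in the same example $[z^0](L_xL_y)=\sum_{n\geq1}\frac{t^n}{n^2x^ny^n}$ is neither zero nor independent of $t$. No residue computation at $z^\circ(x)$, $z^\bullet(y)$ can rescue that route. As written, the proposal ends with a plan rather than a proof.

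\paragraph{The fix is one line.} Use $[z^0](z\,\cdot\,)=[z^{-1}](\,\cdot\,)$ together with $[z^{-1}]\partial_z(\cdot)=0$:
\[
[z^0]\bigl(z\,L_y\,\partial_z\partial_tL_x\bigr)=[z^{-1}]\bigl(L_y\,\partial_z\partial_tL_x\bigr)=-[z^{-1}]\bigl(\partial_zL_y\,\partial_tL_x\bigr)=-[z^0]\bigl(z\,\partial_zL_y\,\partial_tL_x\bigr).
\]
Hence $[z^0]\partial_tF$ is exactly your bracket, with no extra term. By Lemma~\ref{lem:poisson} and Theorem~\ref{thm:ak}, that bracket equals $[z^0]\frac{1}{(x-x(z))(y-y(z))}=A_1(x;y,t)$.

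With this correction, your argument is essentially the paper's, run in the opposite direction. The paper starts from $A_1$ and inserts $1=z(\partial_zx\,\partial_ty-\partial_zy\,\partial_tx)$. It then writes $[z^0](z\,\partial_zf\cdot g)=\sum_{h\in\Z}h\,[z^h]f\,[z^{-h}]g$. Reindexing $h\mapsto -h$ expresses the antisymmetry of this pairing, which is your integration by parts done correctly. This reveals the $t$-derivative of $\sum_{h}h\,[z^h]L_x\,[z^{-h}]L_y=[z^0]\bigl(z\,\partial_zL_x\cdot L_y\bigr)$.
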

This equation, completed with Equation \eqref{eq:dH1/dt}, yields the last equality of Equation \eqref{eq:a1h1b1rel} in Proposition \ref{prop:k1}. Indeed, 
the remaining step is to check that both sides evaluate to $1$ at $t=0$. In Appendix \ref{appendix}, we prove that $[t^0]x(z,t)=V_{\bullet}'(z^{-1})\in \R[z^{-1}]$ and $[t^0]y(z,t)=z^{-1}\in z^{-1}\R[z^{-1}]$, which implies that the right-hand side 
indeed evaluate to $1$ as $t=0$. Now let us prove Proposition \ref{prop:a1dt}.

\begin{proof}
We start from the left-hand side by writing $A_1(x;y,t)$ in the form 
$[z^0]\frac{1}{(x-x(z,t))(y-y(z,t))}$ as given by Theorem \ref{thm:ak} for $k=1$.  
We then modify the numerator $1$ using Lemma \ref{lem:poisson} through the relation:
$$
1=z\,\left(\dpart{x}{z}(z,t)\,\dpart{y}{t}(z,t)-\dpart{y}{z}(z,t)\,\dpart{x}{t}(z,t)\right).
$$
We can now proceed to the computation. Splitting the sum into two parts, we obtain:
$$
[z^0]\frac{1}{(x-x(z,t))(y-y(z,t))}=[z^0]\,z\left(\frac{\dpart{x}{z}(z,t)\,\dpart{y}{t}(z,t)}{(x-x(z,t))(y-y(z,t))}-\frac{\dpart{x}{t}(z,t)\,\dpart{y}{z}(z,t)}{(x-x(z,t))(y-y(z,t))}\right).
$$
Introducing logarithmic derivatives, we can rewrite the right-hand side as:
\begin{multline*}
[z^0]\,z\left(\frac{\partial}{\partial z}\ln(x-x(z,t))\cdot\frac{\partial}{\partial t}\ln(y-y(z,t))
-\frac{\partial}{\partial t}\ln(x-x(z,t))\cdot\frac{\partial}{\partial z}\ln(y-y(z,t))\right)=\\
\sum_{h\in\Z}h[z^h]\ln(x-x(z,t))\,[z^{-h}]\frac{\partial}{\partial t}\ln(y-y(z,t))
-\sum_{h\in\Z}h[z^{-h}]\frac{\partial}{\partial t}\ln(x-x(z,t))\,[z^{h}]\ln(y-y(z,t)).
\end{multline*}
We then change $h$ to $-h$ in the second sum, which reveals the derivative of a product:
$$
[z^0]\frac{1}{(x-x(z,t))(y-y(z,t))}=\frac{\partial}{\partial t}\sum_{h\in\Z}h[z^h]\ln(x-x(z,t))\,[z^{-h}]\ln(y-y(z,t)).
$$
This relation can be rewritten as:
$$
A_1(x;y)=\frac{\partial}{\partial t}[z^0]\,z\ln\left(1-\frac{y(z,t)}{y}\right)\frac{\partial}{\partial z}\ln\left(1-\frac{x(z,t)}{x}\right),
$$
which is exactly what we wanted to show.
\end{proof}

\subsubsection{A relation between $A_1$ and the spectral curve}\label{sec:a1E}

As introduced in Section \ref{sec:monochromatic}, hypermaps with a monochromatic boundary play a crucial role in the combinatorial interpretation of various algebraic quantities such as $X(y)$ and $Y(x)$, hence in the understanding of the spectral curve $E(x,y)=0$.  
The latter originally appeared in the context of solving Tutte's equations, that is, by removing the marked edge of the outer face in order to derive a closed form for $H_1$. This is how the following relation appeared in \cite{Eynard2002}:
\begin{equation}\label{eq:h1E}
H_1(x;y)=\frac{E(x;y)}{(x-X(y))(y-Y(x))}.
\end{equation}
In fact, using Equation \eqref{eq:a1h1b1rel}, one can rewrite this equation as an equation on 
the generating function of accessibly pointed $1$-alternating hypermaps:
$$
A_1(x;y)=\frac{\partial}{\partial t}\ln\left(\frac{E(x,y)}{(x-X(y))(y-Y(x))}\right).
$$
The goal of this section is to prove this equation, which is the main result of Proposition \ref{prop:a1E},
thereby giving another proof of Equation \eqref{eq:h1E} without using Tutte's method.
\\

First, let us denote by $d^{\circ}$ (resp. $d^{\bullet}$) the maximal degree of white (resp. black) faces.  
In this case, the difference between the left and right sides of an elementary slice is bounded. We have already seen that it is bounded by $-1$ from below but, going around 
the face incident to the base edge of such a slice, we find an upper bound of $d^{\bullet}-1$ for slices of type $\mathcal{A}$ and of $d^{\circ}-1$ for slices of type $\mathcal{B}$, since the left and right sides of the slice are geodesics.  
We therefore conclude that $z^{d^{\bullet}-1}(x(z)-x)$ is a polynomial in $z$ of degree at most $d^{\bullet}$, while $z(y(z)-y)$ is a polynomial of degree at most $d^{\circ}$.  
We may thus consider their resultant. This resultant is the minimal polynomial whose zeros are parametrized by $(x(z),y(z))$, hence it is proportional to $E(x,y)$. Note that the definition of $E$ assumes that the proportionality constant does not depend on $t$, so it 
will not play any role, since its logarithmic derivative with respect to $t$ vanishes. From now on, we will say that $E$ is exactly the resultant of $(x(z)-x,y(z)-y)$, for simplicity.
\\
Since $E(x,y)$ is a resultant, we shall express it using all the roots of $x(z)-x$ and $y(z')-y$.  
We have already introduced one root $z^{\circ}$ and $z^{\bullet}$ for each of the two Laurent polynomials 
(which are in fact compositional inverses of each other); we now introduce the remaining ones.
To this end, we apply the Newton--Puiseux theorem, which ensures that there exist $d^{\bullet}-1$ other roots of the equation $x(z)=x$, denoted $z_j^{\circ}(x)$ for $1\leqslant j\leqslant d^{\bullet}-1$, such that 
$$
z_j^{\circ}(x)\in e^{\frac{2ij\pi}{d^{\bullet}-1}}\left(\frac{x^{-1}}{a_{d^{\bullet}-1}}\right)^{\frac{1}{d^{\bullet}-1}}\,\left(1+x^{-\frac{1}{d^{\bullet}-1}}\K[\![x^{-\frac{1}{d^{\bullet}-1}}]\!]\right),
$$
where $\K$ denotes the field of fractions of $\R$. In particular, an interesting fact on the construction of these roots is that they are all distinct.  
Similarly, we introduce the other roots $z_j^{\bullet}(y)$ for $1\leqslant j\leqslant d^{\circ}-1$ of $y(z)=y$, satisfying 
$$
z_j^{\bullet}(y)\in e^{\frac{2ij\pi}{d^{\circ}-1}}\left(\frac{y^{-1}}{b_{-1}}\right)^{-\frac{1}{d^{\circ}-1}}\,\left(1+y^{-\frac{1}{d^{\circ}-1}}\K[\![y^{-\frac{1}{d^{\circ}-1}}]\!]\right).
$$
In what follows, we will call $z^{\bullet}(y)$ and all the $z_j^{\circ}(x)$ the \emph{small roots} of the equations $y(z')=y$ and $x(z)=x$, as they tend to $0$ as $x,y$ goes to $\infty$. 
Similarly, the other roots will be called the \emph{large roots} since they tend to $\infty$.
\\

We now have all the ingredients needed to prove Equation \eqref{eq:a1E}.  
Let us begin by expanding the expression of the resultant $E(x,y)$.  
Since the resultant of two Laurent polynomials $P$ and $Q$ is (proportional to) $\prod_{i}Q(\alpha_i)$, where the $\alpha_i$ are the roots of $P$, we obtain the following explicit formula, with $Q(z):=x-x(z)$ and $P(z):=y-y(z)$:
$$
E(x,y)=(x-X(y))\prod_{i=1}^{d^{\circ}-1}(x-x(z_i^{\bullet}(y))),
$$
where $X(y)=x(z^{\bullet}(y))$. As explained earlier, the proportionality constant plays no role here, so we set it to $1$.
The latter relation can also be written as
\begin{equation}\label{eq:resultant}
    \frac{E(x,y)}{(x-X(y))(y-Y(x))}=\frac{\prod_{j=1}^{d^{\circ}-1}(x-x(z_j^{\bullet}(y)))}{y-Y(x)}.
\end{equation}
Taking the logarithm and differentiating then gives:
\begin{equation}\label{eq:dlnE/dt}
\frac{\partial}{\partial t}\ln\left(\frac{E(x,y)}{(x-X(y))(y-Y(x))}\right)=\frac{\frac{\mathrm{d}}{\mathrm{d}t}y(z^{\circ}(x,t),t)}{y-y(z^{\circ}(x))} - \sum_{j=1}^{d^{\bullet}-1}\frac{\frac{\mathrm{d}}{\mathrm{d}t}x(z_j^{\bullet}(y,t),t)}{x-x(z_j^{\bullet}(y))}.
\end{equation}

Now that we have sufficiently developed the right-hand side of Equation \eqref{eq:a1E}, let us turn back to the left-hand side.  
Recall from Theorem \ref{thm:ak} that we have an explicit form for the generating function of accessibly pointed $1$-alternating maps.  
The next step consists in performing a partial fraction decomposition with respect to $z$ of this expression, before extracting the $z^0$ coefficient.  
We thus write:
\begin{align*}
A_1(x;y)&=[z^0]\frac{1}{(x-x(z))(y-y(z))}\\&=[z^0]\frac{z^{d^{\bullet}}}{C(z-z^{\circ}(x))\prod_{j=1}^{d^{\circ}-1}(z-z_j^{\bullet}(y))\cdot(z-z^{\bullet}(y))\prod_{j=1}^{d^{\bullet}-1}(z-z_j^{\circ}(x))},
\end{align*}
where we place the large roots on the left and the small roots on the right of the product. 
The constant $C$, independent of $z$, will be irrelevant for our present discussion.  
\\
Performing the partial fraction decomposition leads to:
\begin{align*}
A_1(x;y)&=[z^0]\Bigg(\frac{1}{z-z^{\circ}(x)}\frac{(z^{\circ}(x))^{d^{\bullet}}}{\dpart{z^{d^{\bullet}}(x-x(z))(y-y(z))}{z}_{|z=z^{\circ}(x)}}+\sum_{j=1}^{d^{\circ}-1}\frac{1}{z-z_j^{\bullet}(y)}\frac{(z_j^{\bullet}(y))^{d^{\bullet}}}{\dpart{z^{d^{\bullet}}(x-x(z))(y-y(z))}{z}_{|z=z_j^{\bullet}(y)}}\\
&+\frac{1}{z-z^{\bullet}(y)}\frac{(z^{\bullet}(y))^{d^{\bullet}}}{\dpart{z^{d^{\bullet}}(x-x(z))(y-y(z))}{z}_{|z=z^{\bullet}(y)}}+\sum_{j=1}^{d^{\bullet}-1}\frac{1}{z-z_j^{\circ}(x)}\frac{(z_j^{\circ}(x))^{d^{\bullet}}}{\dpart{z^{d^{\bullet}}(x-x(z))(y-y(z))}{z}_{|z=z_j^{\circ}(x)}}\Bigg).
\end{align*}

This expression simplifies slightly when differentiating the denominator. Indeed, let us simplify the first denominator:
\begin{align*}
\dpart{z^{d^{\bullet}}(x-x(z))(y-y(z))}{z}_{|z=z^{\circ}(x)}=&\underbrace{(x-x(z^{\circ}(x)))}_{=0}\dpart{z^{d^{\bullet}}(y-y(z))}{z}_{|z=z^{\circ}(x)}\\
&+(z^{\circ}(x))^{d^{\bullet}}(y-y(z^{\circ}(x)))\dpart{(x-x(z))}{z}_{|z=z^{\circ}(x)}\\
=&-(z^{\circ}(x))^{d^{\bullet}}(y-y(z^{\circ}(x)))\dpart{x}{z}_{|z=z^{\circ}(x)},
\end{align*} 
and the three other $z$-derivatives simplify the same way. This yields
\begin{multline*}
A_1(x;y)=[z^0]\Bigg(\frac{1}{z-z^{\circ}(x)}\frac{-1}{(y-y(z^{\circ}(x)))\dpart{x}{z}_{|z=z_j^{\circ}(x)}}+\sum_{j=1}^{d^{\circ}-1}\frac{1}{z-z_j^{\bullet}(y)}\frac{-1}{(x-x(z_j^{\bullet}(y)))\dpart{y}{z}_{|z=z_j^{\bullet}(y)}}\\
+\frac{1}{z-z^{\bullet}(y)}\frac{-1}{(x-x(z^{\bullet}(y)))\dpart{y}{z}_{|z=z^{\bullet}(y)}}+\sum_{j=1}^{d^{\bullet}-1}\frac{1}{z-z_j^{\circ}(x)}\frac{-1}{(y-y(z_j^{\circ}(x)))\dpart{x}{z}_{|z=z_j^{\circ}(x)}}\Bigg).
\end{multline*}

We now extract the coefficient of $z^0$ from each term.  
Since the partial fraction decomposition was performed in a splitting field for $(x-x(z))(y-y(z))$, recall that $A_1(x;y)$ belongs to $x^{-1}y^{-1}\R[\![x^{-1},y^{-1}]\!]$.  
Thus, we are working in an algebraic extension of this ring, namely the field of Puiseux series in $x^{-1}$ and $y^{-1}$ with coefficients in $\K$.  
This means that $\frac{1}{z-z^{\circ}(x)}$ should be expanded as a power series in $z$, since its $z^n$ coefficient (for $n\geqslant0$) is then $z^{\circ}(x)^{-n-1}$, a power series in $x^{-1}$. The same reasoning applies to the other small roots, so the fraction $\frac{1}{z-z_j^{\bullet}(y)}$, $1\leqslant j\leqslant d^{\circ}-1$, is expanded in the same way.  
In contrast, in the case of the large roots, $\frac{1}{z-z^{\bullet}(y)}$ and all the fractions $\frac{1}{z-z_j^{\circ}(x)}$, $1\leqslant j\leqslant d^{\bullet}-1$, need to be expanded as Laurent power series in $z^{-1}$, so that their terms are series in $y^{-1}$.  
Hence we need to write these fractions as $\frac{z^{-1}}{1-z^{-1}z^{\bullet}(y)}$ and expand for large $z$, which finally have no $z^0$-coefficient. So we write
\begin{align*}
A_1(x;y)&=[z^0]\Bigg(\frac{1}{z-z^{\circ}(x)}\frac{-1}{(y-y(z^{\circ}(x)))\dpart{x}{z}_{|z=z_j^{\circ}(x)}}+\sum_{j=1}^{d^{\circ}-1}\frac{1}{z-z_j^{\bullet}(y)}\frac{-1}{(x-x(z_j^{\bullet}(y)))\dpart{y}{z}_{|z=z_j^{\bullet}(y)}}\\
&+\frac{z^{-1}}{1-z^{-1}z^{\bullet}(y)}\frac{-1}{(x-x(z^{\bullet}(y)))\dpart{y}{z}_{|z=z^{\bullet}(y)}}+\sum_{j=1}^{d^{\bullet}-1}\frac{z^{-1}}{1-z^{-1}z_j^{\circ}(x)}\frac{-1}{(y-y(z_j^{\circ}(x)))\dpart{x}{z}_{|z=z_j^{\circ}(x)}}\Bigg)\\
&=  \frac{1}{z^{\circ}(x)}\frac{1}{(y-y(z^{\circ}(x)))\dpart{x}{z}_{|z=z_j^{\circ}(x)}}+\sum_{j=1}^{d^{\circ}-1}\frac{1}{z_j^{\bullet}(y)}\frac{1}{(x-x(z_j^{\bullet}(y)))\dpart{y}{z}_{|z=z_j^{\bullet}(y)}}\\
&\overset{Y(x)=y(z^{\circ}(x))}{=}  \frac{1}{z^{\circ}(x)\,\dpart{x}{z}_{|z=z_j^{\circ}(x,t)}}\frac{1}{(y-Y(x))}+\sum_{j=1}^{d^{\circ}-1}\frac{1}{z_j^{\bullet}(y)\,\dpart{y}{z}_{|z=z_j^{\bullet}(y)}}\frac{1}{(x-x(z_j^{\bullet}(y)))}.
\end{align*}

Finally, the following lemma allows us to relate this expression to Equation \eqref{eq:dlnE/dt}, which immediately yields the desired result.

\begin{lem}\label{lem:dx(z0)}
    We have the following formulas for derivatives of compositions:
    $$
    \frac{\mathrm{d}}{{\mathrm{d}t}}y(z^{\circ}(x,t),t)=\frac{1}{z^{\circ}(x)\dpart{x}{z}(z^{\circ}(x))}\,\,\,\text{and}\,\,\,\forall j\in[\![1,d^{\circ}-1]\!],  \frac{\mathrm{d}}{{\mathrm{d}t}}x(z_j^{\bullet}(x,t),t)=-\frac{1}{z_j^{\bullet}(y)\dpart{y}{z}(z_j^{\bullet}(y))}.
    $$
\end{lem}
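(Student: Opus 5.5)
The plan is to derive both formulas from the Poisson bracket identity of Lemma~\ref{lem:poisson}, evaluated at a root and combined with the chain rule. Take the first formula. Since $x(z^{\circ}(x,t),t)=x$ holds identically in $t$, differentiating in $t$ gives
\[
\dpart{x}{z}(z^{\circ})\,\dpart{z^{\circ}}{t}+\dpart{x}{t}(z^{\circ})=0,
\qquad\text{hence}\qquad
\dpart{z^{\circ}}{t}=-\frac{\dpart{x}{t}(z^{\circ})}{\dpart{x}{z}(z^{\circ})}.
\]
The total derivative of the composition is then
\[
\frac{\mathrm{d}}{\mathrm{d}t}y(z^{\circ},t)=\dpart{y}{z}(z^{\circ})\,\dpart{z^{\circ}}{t}+\dpart{y}{t}(z^{\circ})
=\frac{\dpart{x}{z}\dpart{y}{t}-\dpart{y}{z}\dpart{x}{t}}{\dpart{x}{z}}\Big|_{z=z^{\circ}(x)}.
\]
The numerator is exactly the Poisson bracket, so it equals $1/z^{\circ}(x)$. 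This gives $\frac{1}{z^{\circ}(x)\,\dpart{x}{z}(z^{\circ}(x))}$, as claimed.

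For the second formula, the argument is symmetric with the roles of $x$ and $y$ swapped. We use $y(z_j^{\bullet}(y,t),t)=y$ to get $\dpart{z_j^{\bullet}}{t}=-\dpart{y}{t}/\dpart{y}{z}$. Then $\frac{\mathrm{d}}{\mathrm{d}t}x(z_j^{\bullet},t)$ equals $(\dpart{y}{z}\dpart{x}{t}-\dpart{x}{z}\dpart{y}{t})/\dpart{y}{z}$ evaluated at $z=z_j^{\bullet}(y)$. The numerator is minus the bracket, so the result is $-1/(z_j^{\bullet}\dpart{y}{z}(z_j^{\bullet}))$. Note that the statement's argument $z_j^{\bullet}(x,t)$ is presumably a typo for $z_j^{\bullet}(y,t)$.

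The main obstacle is justifying that Lemma~\ref{lem:poisson} can be evaluated at the roots $z_j^{\bullet}(y)$. That lemma is an identity between formal Laurent series in $z$. In the bounded degree case, $x(z)$ and $y(z)$ are Laurent polynomials in $z$ with coefficients in $\R$. So the identity is a polynomial identity after multiplying by a power of $z$, and it can be specialized at any element of an extension ring, in particular at the Puiseux series $z_j^{\bullet}(y)$ and $z^{\circ}(x)$. The coefficients of these are power series in $t$ over $\K$, so $t$-differentiation is well defined.

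The chain rule for the composition also holds formally. We expand $x(z_j^{\bullet}(y,t),t)$ as a finite sum $\sum_i a_i(t)\,z_j^{\bullet}(y,t)^{-i}$ and differentiate term by term. This requires $z_j^{\bullet}$ and $\dpart{y}{z}(z_j^{\bullet})$ to be invertible, which holds because the roots are distinct and nonzero, as recalled from the Newton--Puiseux construction. The same applies to $z^{\circ}$, where $\dpart{x}{z}(z^{\circ})\neq 0$ follows from $z^{\circ}$ being a compositional inverse (Proposition~\ref{defth:z0}).
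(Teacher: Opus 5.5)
Your proof is correct and matches the paper's argument: implicitly differentiate $x(z^{\circ}(x,t),t)=x$ (resp.\ $y(z_j^{\bullet}(y,t),t)=y$) in $t$, substitute into the chain rule, and recognize the Poisson bracket of Lemma~\ref{lem:poisson} in the numerator. Your chain-rule line correctly has $\dpart{y}{z}$ where the paper's displayed version has a typo. Your remarks on specializing the Laurent-polynomial identity at the roots, and on the invertibility of $\dpart{x}{z}(z^{\circ})$ and $\dpart{y}{z}(z_j^{\bullet})$, supply justification the paper leaves implicit.
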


\begin{proof}
    Since the proof is the same for all formulas, we will only discuss the first one.

    We start by applying the chain rule on the left-hand side: $$\frac{\mathrm{d}}{{\mathrm{d}t}}y(z^{\circ}(x,t),t)=\dpart{y}{t}(z^{\circ}(x,t),t) + \dpart{z^{\circ}}{t}(y,t)\cdot\, \dpart{y}{t}(z^{\circ}(x,t),t).$$ Hence we are led to make $\dpart{z^{\circ}}{t}(x,t)$ explicit.

    To this end, the idea is to differentiate with respect to $t$ the equation $x(z^{\circ}(x,t),t)=x$. This gives
    $$
    \dpart{z^{\circ}}{t}(x,t)=-\frac{\dpart{x}{t}(z^{\circ}(x,t),t)}{\dpart{x}{z}(z^{\circ}(x,t),t)}.
    $$
    We now only need to substitute this relation into the former one and conclude using Lemma \ref{lem:poisson}:
    \begin{align*}
    \frac{\mathrm{d}}{\mathrm{d}t}y(z^{\circ}(x,t),t)&=\dpart{y}{t}(z^{\circ}(x)) + \left(-\frac{\dpart{x}{t}(z^{\circ}(x))}{\dpart{x}{z}(z^{\circ}(x))}\right)\cdot\, \dpart{y}{t}(z^{\circ}(x))\\
    &=\left(\dpart{x}{z}\,\dpart{y}{t}-\dpart{y}{z}\,\dpart{x}{t}\right)_{|z=z^{\circ}(x)}\cdot\frac{1}{\dpart{x}{z}(z^{\circ}(x))}\\
&=\frac{1}{z^{\circ}(x)\dpart{x}{z}(z^{\circ}(x))}
    \end{align*}
    which concludes the proof of the lemma and hence the proof of Proposition \ref{prop:a1E}.

\end{proof}

\section{Enumeration of $2$-alternating hypermaps}
\label{sec:2alt}
In this section, we establish Theorem \ref{thm:h2b2}.  
Our strategy relies on most of the concepts introduced earlier: we decompose a pointed $2$-alternating hypermap according to the component containing the marked vertex and all the vertices that can reach it, called the \emph{accessible component}, and the remaining part.  
We shall see that this decomposition follows naturally from a planarity argument, which allows us to separate the hypermap in a particularly straightforward way.

We then apply this result—namely Equation \eqref{eq:h2}—to derive the generating function of strongly connected $2$-alternating hypermaps, which is Equation \eqref{eq:b2}.  
This derivation does not explicitly rely on the notion of accessibility, although a similar argument could easily be adapted to the case of $B_2$.

\subsection{Expressing $2$-alternating hypermaps with accessibly pointed hypermaps}

Our approach follows the same direction as in the previous section during the proof of Equation \eqref{eq:dH1/dt}, with one essential difference:  
we shall now investigate whether the outward corners are connected to the distinguished vertex of the hypermap or not.  
Note that, in the case of $1$-alternating hypermaps, the marked vertex was always accessible from the outward corner, but here it will be a central element of our discussion.  

The goal of this section is therefore to prove the following lemma:
\begin{lem}\label{lem:dH2/dt}
A pointed $2$-alternating hypermap can be decomposed as follows:
\begin{equation}\label{eq:dH2/dt}
\begin{split}
\dpart{H_2(x_1,x_2;y_1,y_2)}{t}
&= A_2(x_1,x_2;y_1,y_2)\,H_1(x_1;y_1)\,H_1(x_2;y_2)\\
&+ \left(A_1(x_1;y_2)+A_1(x_2;y_1)\right)\,H_2(x_1,x_2;y_1,y_2).
\end{split}
\end{equation}
\end{lem}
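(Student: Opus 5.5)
The plan is to mimic the proof of \eqref{eq:dH1/dt}. The left-hand side counts $2$-alternating hypermaps $\mathfrak{m}$ with a marked vertex $v$, and I would classify them according to the bridges that obstruct accessibility of $v$. By Lemma~\ref{lem:accesscrit} and Remark~\ref{rem:inwardcorner}, $v$ is accessible if and only if it can be reached from the two inward corners $c_1$ and $c_3$. By Lemma~\ref{lem:pathnobridge}, the only obstructions are bridges that separate $v$ from $c_1$ or from $c_3$ and that are oriented away from $v$. Since bridges lie on the outer face, removing one splits $\mathfrak{m}$ into two plane hypermaps, and the four boundary intervals are distributed between the two sides. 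The key dichotomy is the following.

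\emph{Case 1: no bridge separates $v$ from both $c_1$ and $c_3$ simultaneously.} For $i=1,3$, if $v$ is not reachable from $c_i$, I cut the bridge $b_i$ closest to $v$ among those separating $v$ from $c_i$. The component containing $v$ still contains the other inward corner, and the new inward corner created at the foot of $b_i$ plays the role of $c_i$. So it is a $2$-alternating hypermap, and it is accessibly pointed by the same argument as in Section~\ref{sec:h1a1}. The component detached by $b_i$ contains only $c_i$ among the inward corners, so it is a (possibly empty) $1$-alternating hypermap. Its intervals are sub-intervals of the two intervals adjacent to $c_i$, namely those carrying $(x_1,y_1)$ for $c_1$ and $(x_2,y_2)$ for $c_3$. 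The bridge adds one edge to each of these two intervals, which matches the $+1$ shifts in the exponents, exactly as in the case $k=1$. This gives the term $A_2 H_1(x_1;y_1)H_1(x_2;y_2)$, with the $\delta_{k,1}$ terms accounting for the absence of cuts.

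\emph{Case 2: some bridge separates $v$ from both inward corners.} I take the bridge $b$ closest to $v$ with this property; it necessarily points away from $v$. The component $\mathfrak{m}_1$ containing $v$ has a single inward corner, namely the foot of $b$, so it is $1$-alternating. I would argue that it is accessibly pointed by the minimality of $b$. This needs care: a bridge inside $\mathfrak{m}_1$ separating $v$ from its new inward corner would separate $v$ from both $c_1$ and $c_3$ in $\mathfrak{m}$, contradicting the choice of $b$. The other component $\mathfrak{m}_2$ contains $c_0,\ldots,c_3$, except that $b$ is attached at an outward corner, so $\mathfrak{m}_2$ is an unpointed $2$-alternating hypermap.

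By planarity, $b$ hangs either at the corner $c_2$ or at the corner $c_0$. In the first case the counter-clockwise arc of $\mathfrak{m}_1$ lies in the interval counted by $x_2$ and its clockwise arc in the interval counted by $y_1$. In the second case the arcs lie in the intervals counted by $x_1$ and $y_2$. This produces $(A_1(x_1;y_2)+A_1(x_2;y_1))H_2$, again with the bridge contributing the $+1$ in each exponent.

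The inverse maps glue back bridges from inward corners to outward corners, as in Section~\ref{sec:h1a1}. I would check that the two cases are disjoint and exhaustive and that the choices of "closest" bridges make the maps bijective. I expect the main obstacle to be the planarity bookkeeping. One must show that a bridge on the boundary of a $2$-alternating hypermap separates the corners into a contiguous block, so that a $1$-alternating piece hangs precisely at an outward corner ($c_0$ or $c_2$) or precisely at an inward corner ($c_1$ or $c_3$). One must also identify correctly which pairs of variables $(x_i,y_j)$ the detached piece inherits, including degenerate situations where marked corners coincide. Verifying that accessibility of $v$ holds in the $v$-component in both cases is the other delicate point, and I would handle it via Lemma~\ref{lem:pathnobridge}.
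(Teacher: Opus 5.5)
Your proof is correct and uses the same decomposition as the paper. A bridge separating $v$ from both $c_1$ and $c_3$ must point away from $v$ and has $c_0$ or $c_2$ on its far side, so your Case~1 is the paper's case ``$v$ accessible from both $c_0$ and $c_2$'', and your Case~2, split by whether $b$ hangs at $c_2$ or at $c_0$, is the paper's Cases~2 and~3, with the same bridge and the same exponent bookkeeping. One correction is needed in Case~1: $b_i$ must be the closest bridge among those separating $v$ from $c_i$ \emph{and pointing away from $v$}. Unlike for $k=1$, a bridge pointing toward $v$ can separate $v$ from $c_1$ alone, with $c_0,c_1,c_2$ on its far side; an example is the edge $u\to v$ in the tree with edges $u\to w$, $u\to v$, marked vertex $v$ and $c_1$ at $w$. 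Cutting such a bridge would not give the pieces you describe.
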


Let us first see how this formula allows us to obtain Equation \eqref{eq:h2} of Theorem \ref{thm:h2b2}.  
To do so, we write, using Equation \eqref{eq:dH1/dt}
\begin{align*}
\dpart{}{t}\left(\frac{H_2(x_1,x_2;y_1,y_2)}{H_1(x_1;y_2)\,H_1(x_2;y_1)}\right)=&\frac{1}{H_1(x_1;y_2)\,H_1(x_2;y_1)}\Bigg(
\dpart{H_2(x_1,x_2;y_1,y_2)}{t}\\
&\hspace{2cm}-(A_1(x_1;y_2)+A_1(x_2;y_1))\,H_2(x_1,x_2;y_1,y_2)\Bigg).
\end{align*}
We then deduce, by substituting Equation \eqref{eq:dH2/dt}:
\begin{equation}\label{eq:dH2/H1H1/dt}
\dpart{}{t}\left(\frac{H_2(x_1,x_2;y_1,y_2)}{H_1(x_1;y_2)\,H_1(x_2;y_1)}\right)
= A_2(x_1,x_2;y_1,y_2)\,
\frac{H_1(x_1;y_1)\,H_1(x_2;y_2)}{H_1(x_1;y_2)\,H_1(x_2;y_1)}.
\end{equation}
We can now use Equation \eqref{eq:A=decompo2} from Corollary \ref{cor:ak}.
This allows us to recognize a total derivative on the right-hand side of Equation \eqref{eq:dH2/H1H1/dt}:
$$
A_2(x_1,x_2;y_1,y_2)\,
\frac{H_1(x_1;y_1)\,H_1(x_2;y_2)}{H_1(x_1;y_2)\,H_1(x_2;y_1)}
=\frac{1}{(x_1-x_2)(y_1-y_2)}\,
\dpart{}{t}
\left(\frac{H_1(x_1;y_1)\,H_1(x_2;y_2)}{H_1(x_1;y_2)\,H_1(x_2;y_1)}\right),
$$
by using the logarithmic derivative expression of $A_1$ in Equation \eqref{eq:dH1/dt}.  
We then conclude by integrating and using the facts that $[t^0]\,H_1(t)=1$ and $[t^0]\,H_2(t)=0$ by definition of $H_k$ in Equation \eqref{eq:Hkdef}, which gives Equation \eqref{eq:h2}.
\\

We can now get back to the combinatorial proof of Equation \eqref{eq:dH2/dt}.

\begin{proof}[Proof of Lemma \ref{lem:dH2/dt}]
\begin{figure}[t]
    \centering
    \includegraphics[width=1\textwidth]{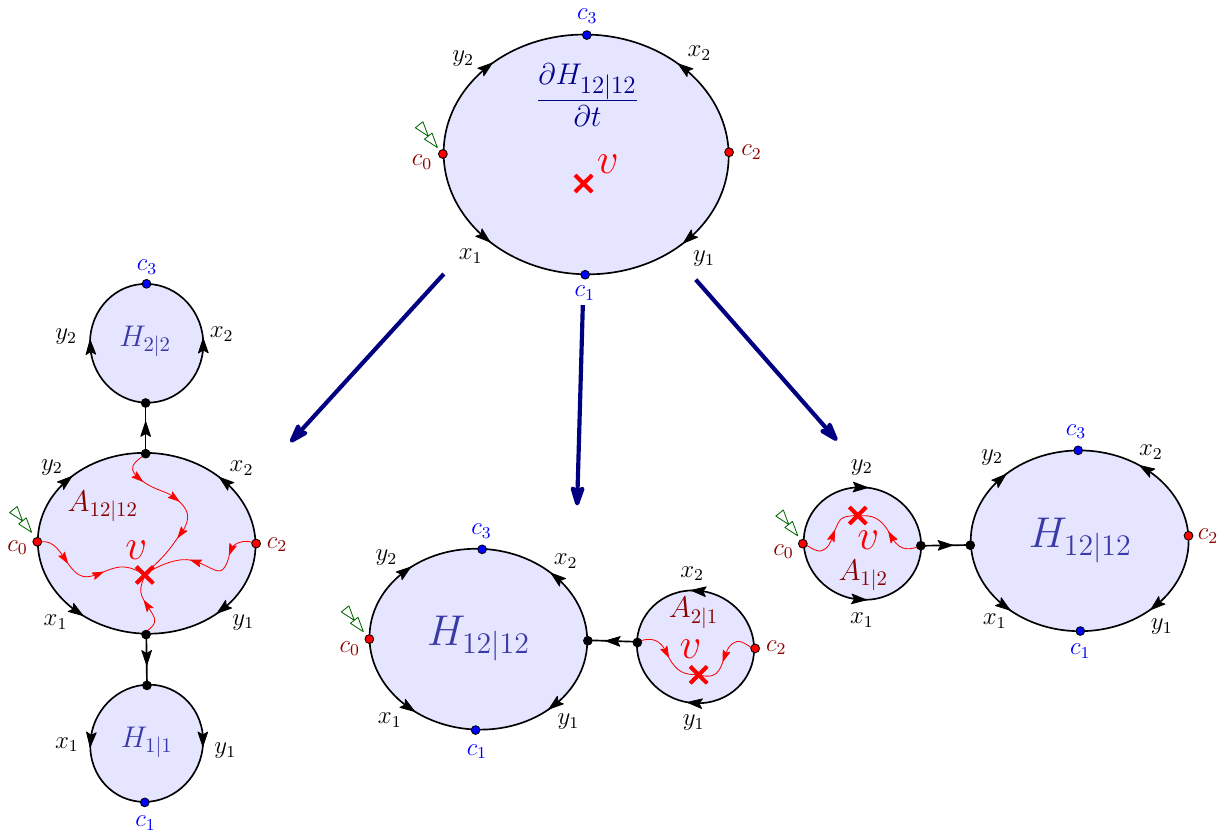} 
    \caption{Sketch of the proof of Lemma \ref{lem:dH2/dt}, from left to right the cases $1,2$ and $3$ according to whether $c_0$ or $c_2$ is related to $v$. We use here the shorthand notation $H_{i\mid j}:=H_1(x_i;y_j)$ and $H_{12\mid12}:=H_2(x_1,x_2;y_1,y_2)$ for simplicity.}
    \label{fig:k2_preuve}
\end{figure}
  
We begin the proof by considering a pointed $2$-alternating hypermap, denoting by $v$ its marked vertex.  
We shall also use the corner notation $c_0,c_1,c_2,c_3$ introduced in Section \ref{sec:hyperdef}.  
We focus on the even-indexed corners $c_0$ and $c_2$, since they are the ones most likely to access to $v$.
\\
Let us distinguish cases according to which of the corners $c_0$ and $c_2$ is connected to $v$. Figure \ref{fig:k2_preuve} gives a sketch of the proof.
    \\

    $\hookrightarrow$ \textit{Case 1: $v$ is accessible from both $c_0$ and $c_2$.} In this case only $c_1$ and $c_3$ may fail to access to $v$: there may exist 
    a bridge on their way to $v$. Taking the last such bridge for each of them isolates $c_1$ and $c_3$ 
    into one $1$-alternating hypermap each, of respective weight $H_1(x_1;y_1)$ and $H_1(x_2;y_2)$. Note the presence of the additional $1$ in $H_1$, which stands for 
    the case where there is no bridge preventing $c_1$ or $c_3$ from being related to $v$. What remains is an accessibly pointed $2$-alternating hypermap component containing $v,c_0$ and $c_2$, counted by $A_2(x_1,x_2;y_1,y_2)$.
    \\
    $\hookrightarrow$ \textit{Case 2: $v$ is accessible from $c_2$ but not from $c_0$.} Since $c_0$ is not related to $v$, there must be a 
    bridge oriented from $v$ to $c_0$. Looking at the orientations of the boundary edges of the 
    hypermap, this bridge cannot lie on $[c_3,c_1]$: otherwise, because of the orientation of the bridge, this boundary interval would have to 
    contain an inward corner, hence a marked corner with odd index by Remark \ref{rem:inwardcorner}, which is not possible. So the bridge must belong to the boundary interval $[c_1,c_3]$. 
    Taking the last of these bridges, we can split the pointed hypermap into two components: the first is not pointed and contains $c_0,c_1,c_3$, while the second, containing $c_2$ and $v$, is a pointed $1$-alternating hypermap. The latter is also accessible since we took the last bridge, and $c_2$ must be an outward corner because the orientation of the bridge forces the existence of an outward corner belonging to $[c_1,c_3]$, which is then $c_2$.
    \\
    $\hookrightarrow$ \textit{Case 3: $v$ is accessible from $c_0$ but not from $c_2$.} By symmetry with Case 2, the corresponding term is $H_2(x_1,x_2;y_1,y_2)\,A_1(x_1;y_2)$.
    \\
    Note that the obstruction of $c_0$ from $v$ implies that $c_2$ can reach $v$, and reciprocally. Hence the case where neither $c_0$ nor $c_2$ can reach $v$ does not occur.
    Finally, summing these three contributions gives exactly Equation \eqref{eq:dH2/dt}. 
\end{proof}

The idea of decomposing our proof according to which outward corners are related to the marked vertex stems from the fact that these are the boundary vertices most likely to be connected to any other vertex.  
Indeed, intuitively, inward corners have little chance of being connected to the marked vertex, since it suffices for a bridge formed by its two incident edges to obstruct such a connection—an event that is structurally inexpensive for the hypermap. It is also 
important to keep in mind that planarity and the uniqueness of the marked outer face are essential arguments here, since this enables us to decompose our $2$-alternating hypermaps into independent components.
These ideas form the foundation of the forthcoming generalization of Lemma \ref{lem:dH2/dt} in the upcoming work \cite{Lejeune2026}.

\subsection{On strongly connected $2$-alternating hypermaps}\label{sec:b2}

In order to study the generating series $B_2$ of strongly connected $2$-alternating hypermaps, we start from a generic $2$-alternating hypermap and ask what might prevent it from being strongly connected.  
As stated in Remark \ref{rem:inwardcorner}, it suffices to determine the conditions under which every vertex of the hypermap is accessible from the vertices incident to the corners $c_1$ and $c_3$. We illustrate the following reasonning in Figure \ref{fig:h2b2}.
\\
To that end, one can already see what could prevent corners $c_1$ and $c_3$ from being connected: the existence of a bridge belonging to $[c_0,c_2]$ or a bridge belonging to $[c_2,c_0]$.  
By considering the last such bridge on the path connecting $c_1$ to $c_3$, and then symmetrically on the path connecting $c_3$ to $c_1$ (which is possible since each bridge disconnects the hypermap), we observe the appearance of two components of weights $H_1(x_1;y_1)$ and $H_1(x_2;y_2)$, to which our corners respectively belong.  
We may therefore remove the two bridges and examine what remains, renaming the corners incident to the endpoints of each bridge $c_1$ and $c_3$ as in the original definition, but now assuming the absence of bridges on the paths from $c_1$ to $c_3$.  
\\
Let us now see how to connect $c_1$ and $c_3$ to $c_0$. This is possible only if there is no bridge belonging to $[c_3,c_1]$. Taking the first such bridge produces a component containing $c_0$ of weight $H_1(x_1;y_2)$.  
Symmetrically, for $c_2$, we obtain a component of weight $H_1(x_2;y_1)$.  
\\
By removing all these bridges, we can thus connect $c_1$ and $c_3$ to all boundary corners, and therefore every boundary vertex can be connected to any other boundary vertex.  
As a result, there cannot exist any remaining bridge in this component, by Lemma \ref{lem:accesscrit}.  
The remaining component is therefore strongly connected, as stated in Lemma \ref{lem:stronglyconnected}, and is thus counted by $B_2(x_1,x_2;y_1,y_2)$ once the bridge weights are added. Note that this decomposition is bijective, since one only needs to connect each $1$-alternating hypermap 
to the corresponding marked corner of the strongly connected $2$-alternating hypermap by a bridge.

 \begin{figure}[t]
    \centering
    \includegraphics[width=1\textwidth]{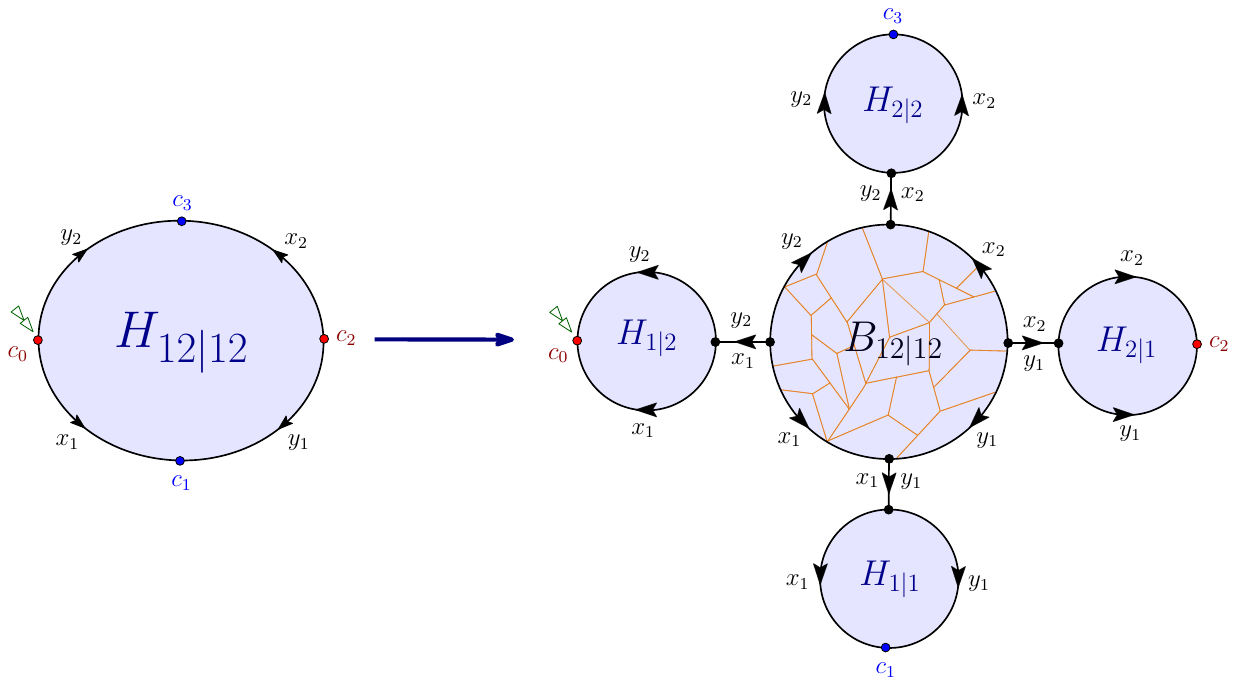} 
    \caption{Sketch of the decomposition of $H_2$ with respect to $B_2$. The idea is simply to remove all bridges that prevent each inner vertex from being connected to the marked corners, since the accessibility criterion of Lemma \ref{lem:accesscrit} relies on accessibility from the boundary. The shorthand notations in the figure are the same as in Figure \ref{fig:k2_preuve}.}
    \label{fig:h2b2}
\end{figure} 

This leads to the formula
\begin{equation}
H_2(x_1,x_2;y_1,y_2)
=H_1(x_1;y_1)\,H_1(x_1;y_2)\,H_1(x_2;y_1)\,H_1(x_2;y_2)\,B_2(x_1,x_2;y_1,y_2).
\end{equation}

This allows us to recover Equation \eqref{eq:b2} from Theorem \ref{thm:h2b2}, using the expression of $H_2(x_1,x_2;y_1,y_2)$ given in Equation \eqref{eq:h2}.  
This finally completes the proof of Theorem \ref{thm:h2b2}.

\section{Conclusion}

The main objective of this work was to highlight a new way of enumerating plane hypermaps with a non-monochromatic boundary.  
To achieve this, we have shown that the variable $t$, marking the vertices, plays a meaningful role.  
Indeed, marking a vertex gives rise to equations involving derivatives with respect to this variable.  
Moreover, studying the accessible component of the marked vertex—that is, the smallest hypermap in which all vertices can access the marked one—allows us to decompose pointed hypermaps.  
The rest of the work then consisted in understanding accessibly pointed hypermaps—made possible thanks to the notion of slices introduced in \cite{Albenque2026}—and in solving the resulting equations, either directly or by identifying a total derivative, in order to find algebraic expressions for alternating hypermaps.  
This approach enabled us to rederive several formulas appearing in \cite{Albenque2026} and \cite{Eynard2016}.
\\
The remaining open questions concern $k$-alternating hypermaps for $k \geqslant 3$—--a problem addressed in the forthcoming article \cite{Lejeune2026}—--as well as the extension to higher genus, in order to find new combinatorial interpretations of the algebraic results provided by topological recursion, as stated in \cite{EO2008}.  
Another direction for future work involves asymptotic considerations, which now appear more approachable thanks to a deeper understanding of the elementary building blocks—namely the generating functions of the elementary slices—bringing us closer to an understanding of distance distributions in hypermaps.

\appendix
\section{Counting plane hypermaps with a monochromatic boundary via integration}\label{appendix}

In this appendix, we give a new proof of Proposition~\ref{prop:monochromatic} using Proposition~\ref{prop:dW/dt} which itself follows from slice decomposition.
We shall first show that the derivatives with respect to $t$ of both sides of each equality in~\eqref{eq:monochromatic} are equal, before matching the integration constants.  
\\
We start from the case of pointed hypermaps with a white boundary, whose generating function is $\dpart{}{t}W^{\circ}(x,t)$.  
The first part of the following proof will be similar 
for pointed hypermaps with a black boundary.
\\

Our idea is to use the expression $\dpart{}{t}W^{\circ}(x,t)$ as $[z^0]\frac{1}{x-x(z,t)}$ and to perform a partial fraction decomposition.  
The issue is that this is only possible if we have finitely many roots for the equation $x=x(z,t)$, which occurs only when the face degrees are bounded. Let 
us show that doing it for any bound for the face degrees is enough to conclude in the 
general case, thanks to the following lemma.  

\begin{lem}\label{lem:boundeddegree}
Let $f$ be an element of $\R$, and let 
$$
f_d:=\Big[\prod_{i\geqslant d+1}(t_i^{\circ})^0\prod_{j\geqslant d+1}(t_j^{\bullet})^0\Big]f
$$
be the restriction of $f$ to faces with degree bounded by $d\geq1$.  
Then the identity $f=0$ holds if, and only if, $f_d=0$ holds for every $d\geq1$.  
\end{lem}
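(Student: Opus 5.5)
The "only if" direction is immediate. The map $f\mapsto f_d$ is the ring homomorphism $\R\to\R$ that sends $t_i^\circ$ and $t_j^\bullet$ to $0$ for $i,j>d$ and fixes all other variables. In particular it is $\Q$-linear, so $f=0$ gives $f_d=0$ for every $d\geq1$.

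For the "if" direction I will argue coefficient by coefficient. Write $f=\sum_{\alpha} c_\alpha\, m_\alpha$, where $m_\alpha$ runs over monomials in $t,t_1^\circ,t_2^\circ,\ldots,t_1^\bullet,t_2^\bullet,\ldots$ and $c_\alpha\in\Q$. Each monomial involves only finitely many variables. So for a fixed $m_\alpha$ there is an integer $D(\alpha)$ such that every $t_i^\circ$ or $t_j^\bullet$ appearing in $m_\alpha$ has index at most $D(\alpha)$.

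The key point is that truncation acts monomialwise: $f_d=\sum_{\alpha\,:\,D(\alpha)\leq d} c_\alpha m_\alpha$. The map kills exactly the monomials containing some variable of index $>d$ and leaves every other monomial, with its coefficient, unchanged. Applying this with $d=D(\alpha)$ gives $c_\alpha=[m_\alpha]f_{D(\alpha)}=0$. Since $\alpha$ was arbitrary, $f=0$.

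The only step needing care is that $\R$ really is the formal power series ring in which every element is a (possibly infinite) formal sum of monomials, each involving finitely many variables. I take this as the convention fixed in Section~\ref{sec:hyperdef}. It guarantees that coefficient extraction $[m_\alpha]$ is well defined and commutes with the truncation $f\mapsto f_d$.

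Since there is no real obstacle here, I will also note how the lemma will be used. An identity between generating functions, for instance the two sides of~\eqref{eq:monochromatic} after differentiating in $t$, is proved in the bounded-degree specialization for every $d$. The lemma then transfers it to the general case. This requires that the constructions commute with truncation, i.e.\ that $(a_i)_d$ and $(b_i)_d$ satisfy~\eqref{eq:aibieq} with truncated weights. That holds because the map $f\mapsto f_d$ is a ring homomorphism compatible with the recursive definitions.
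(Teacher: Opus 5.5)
Your proof is correct and is essentially the paper's own argument: every monomial of $f$ involves only finitely many of the $t_i^\circ,t_j^\bullet$, so its coefficient in $f$ equals its coefficient in $f_d$ once $d$ is at least the largest index occurring, which forces it to vanish. Your version is more explicit than the paper's in noting that $f\mapsto f_d$ acts monomialwise; the one minor imprecision is that $f_d=\sum_{D(\alpha)\leq d}c_\alpha m_\alpha$ holds exactly only if $D(\alpha)$ is the \emph{largest} index occurring in $m_\alpha$ (taken to be $1$ if none occurs), although your final step $c_\alpha=[m_\alpha]f_{D(\alpha)}$ needs nothing more than $D(\alpha)\geq 1$ bounding all indices.
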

\begin{proof}
The direct implication is immediate by definition of $f_d$.

For the converse, recall that a formal series in infinitely many variables (as $f$ which belongs to $\R$) is defined as a sum of monomials of finite degree-that is, involving finitely many variables. Hence one only need 
to check that each coefficient from $f$ in front of any 
of these finite-degree monomials vanish, which is easy by considering the identities $f_d=0$ because they are exactly restricted to the case of finite (face) degree.
\end{proof}

The usefulness of this formal lemma lies in the fact that Proposition \ref{prop:monochromatic} makes sense in $\R$, while we wish to bound the degree of our faces (say, by $d^{\circ}$ for white faces and $d^{\bullet}$ for black faces) in order to speak of the roots of polynomials.  
However, this is merely a computational artifice, since the roots—besides the compositional inverses $z^{\circ}$ and $z^{\bullet}$—no longer appear in the final result.  
\\
We then denote all our roots as in Section \ref{sec:a1E}, and we can therefore write our partial fraction decomposition.  
We refer to the computation from Section \ref{sec:a1E}, since the same ideas apply: performing the decomposition over the roots by turning the denominator into a polynomial, which leads to
$$
\frac{1}{x-x(z)}=\frac{z^{d^{\bullet}}}{z^{d^{\bullet}}(x-x(z))}=\frac{1}{z-z^{\circ}(x)}\frac{(z^{\circ}(x))^{d^{\bullet}}}{\dpart{z^{d^{\bullet}}(x-x(z))}{z}_{|z=z^{\circ}(x)}}+\sum_{j=1}^{d^{\bullet}-1}\frac{1}{z-z_j^{\circ}(x)}\frac{(z_j^{\circ}(x))^{d^{\bullet}}}{\dpart{z^{d^{\bullet}}(x-x(z))}{z}_{|z=z_j^{\circ}(x)}}.
$$
This can be simplified by expanding the derivatives in the denominators as in Section~\ref{sec:a1E}:
$$
\frac{1}{x-x(z)}=-\frac{1}{z-z^{\circ}(x)}\frac{1}{\dpart{x}{z}_{|z=z^{\circ}(x)}}-\sum_{j=1}^{d^{\bullet}-1}\frac{1}{z-z_j^{\circ}(x)}\frac{1}{\dpart{x}{z}_{|z=z_j^{\circ}(x)}}.
$$
To extract the coefficient of $z^0$ as needed to compute $\dpart{}{t}W^{\circ}(x,t)$, only the first term contributes, since other terms are elements of $z^{-1}\R[\![z^{-1}]\!]$ because the partial fraction decomposition holds in $x^{-1}\R[\![x^{-1}]\!]$, and thus vanish.  
We finally obtain:
$$
[z^0]\frac{1}{x-x(z)}=\frac{1}{z^{\circ}(x)\,\dpart{x}{z}_{|z=z^{\circ}(x)}}.
$$
By Proposition~\ref{prop:dW/dt} and by Lemma \ref{lem:dx(z0)}, we then have:
$$
\dpart{}{t}W^{\circ}(x,t)=\frac{\mathrm{d}}{{\mathrm{d}t}}y(z^{\circ}(x,t),t)=\dpart{}{t}Y(x,t),
$$
since $Y(x,t)=y(z^{\circ}(x,t),t)$ by Equation \eqref{eq:XyYx}.  
By a dual argument for pointed hypermaps with a black boundary, we can show that
$$
\dpart{}{t}W^{\bullet}(y,t)=\dpart{}{t}X(y,t).
$$
Thanks to Lemma \ref{lem:boundeddegree}, the two last identities hold coefficientwise in $\R$, and not just in the case of bounded face degrees.
\\
It remains to perform the integration and so to determine the integration constants. We will then prove that 
$$
[t^0]W^{\circ}(x,t)-[t^0]Y(x,t)=-V_{\circ}'(x)
\quad\text{and}\quad
[t^0]W^{\bullet}(y,t)-[t^0]X(y,t)=-V_{\bullet}'(y).
$$

First, note that there exists no hypermap with a monochromatic boundary and zero vertex.  
Hence, we directly have $[t^0]W^{\circ}(x,t)=0$ and $[t^0]W^{\bullet}(y,t)=0$. So the problem reduces to showing that:
$$
[t^0]Y(x,t)=V_{\circ}'(x)
\quad\text{and}\quad
[t^0]X(y,t)=V_{\bullet}'(y).
$$

We will then use the expressions $X(y,t)=x(z^{\bullet}(y,t),t)$ and $Y(x,t)=y(z^{\circ}(x,t),t)$ from Proposition \ref{defth:z0}, from which we want to extract the $t^0$ coefficient.
To do so, we must study elementary slices having no vertices, except those on the right boundary (since they carry no weight as defined in Definition \ref{Def:sliceAB}).  
\\
$\hookrightarrow$ For slices of type $\mathcal{A}$, the vertex incident to the corner $l$ is weightless only if it is the apex.  
Since there are no internal vertices, we are restricted to the case where the base is incident to a single black face, whose remaining edges form the right boundary of the slice.  
Computing the increment in each case gives $[t^0]x(z,t)=\sum_{k\geqslant 1}t_k^{\bullet}z^{1-k}=V_{\bullet}'(z^{-1})$.  
This is a formal series in $z^{-1}$, which thus has no singular term in $z$.  
Hence, $[t^0]x(z^{-1},t)$ is not invertible, implying $[t^0]z^{\circ}(x,t)^{-1}=0$.  
See the proof of Proposition \ref{defth:z0} for a clearer understanding of how $z^{\circ}(x,t)$ is defined.  
\\
$\hookrightarrow$ For slices of type $\mathcal{B}$, identifying corners $l$ and $o$ restricts us to the trivial slice, consisting of one edge and two weightless vertices, with increment $-1$.  
Thus, $[t^0]y(z,t)=z^{-1}$.  
We then observe that $[t^0]z^{\bullet}(y,t)=y^{-1}$, since it is the unique solution of $[t^0]y(z,t)=y$.  
\\
These two observations allow us to conclude that $[t^0]z^{\bullet}(y,t)\neq0$ and:
$$
[t^0]X(y,t)=[t^0]x(z^{\bullet}(y,t),t)=\big([t^0]x(z,t)\big)_{|z=[t^0]z^{\bullet}(y,t)}=V_{\bullet}'(z^{-1})_{|z=y^{-1}}=V_{\bullet}'(y),
$$
which is what we wanted.  
\\

On the other hand, things are more subtle in order to compute $[t^0]Y(x,t)=[t^0]y(z^{\circ}(x,t),t)$ due to the definition of $z^{\circ}(x)$ of which we only have information about its multiplicative inverse, which is ill-defined near $t=0$ as explained earlier. The idea is to make $x(z)$ appear and compose it with $z^{\circ}(x)$ on its 
right to make simplifications.  
We use then Equations \eqref{eq:defxynew} and \eqref{eq:aibieq}:
$$
y(z^{\circ}(x,t),t)=(z^{\circ}(x,t))^{-1}+\big([z^{\geqslant0}]y(z,t)\big)_{|z=z^{\circ}(x,t)},
$$
where $[z^{\geqslant}0] f(z)$ stands for considering only the nonnegative powers of $z$ in the Laurent polynomial $f(z)$. The last term of the previous equation expands as
$$
\big([z^{\geqslant0}]y(z,t)\big)_{|z=z^{\circ}(x,t)}=\sum_{d\geqslant 1}t_d^{\circ}([z^{\geqslant0}]x(z,t)^{d-1})_{|z=z^{\circ}(x,t)}.
$$
Taking the $t^0$ coefficient and recalling that $[t^0]z^{\circ}(x,t)^{-1}=0$, we simplify:
$$
[t^0]y(z^{\circ}(x,t),t)=0+\sum_{d\geqslant 1}t_d^{\circ}[t^0]([z^{\geqslant0}]x(z,t)^{d-1})_{|z=z^{\circ}(x,t)}.
$$
Furthermore, since $([z^{<0}]x(z,t)^{d-1})_{|z=z^{\circ}(x,t)}$ is a polynomial in $z^{\circ}(x)$, then one gets the equality $[t^0]([z^{<0}]x(z,t)^{d-1})_{|z=z^{\circ}(x,t)}=0$ since $[t^0]z^{\circ}(x,t)=0$. We can therefore simplify the expression:
$$
[t^0]([z^{\geqslant0}]x(z,t)^{d-1})_{|z=z^{\circ}(x,t)}=[t^0](x(z,t)^{d-1})_{|z=z^{\circ}(x,t)}=x^{d-1},
$$
since $z^{\circ}(x)$ is the compositional inverse of $x(z)$.  
\\
We finally conclude that:
$$
[t^0]Y(x,t)=[t^0]y(z,t)_{|z=z^{\circ}(x,t)}=\sum_{d\geqslant 1}t_d^{\circ}([z^{\geqslant0}]x(z,t)^{d-1})_{|z=z^{\circ}(x,t)}=\sum_{d\geqslant 1}t_d^{\circ}x^{d-1}=V_{\circ}'(x),
$$
which is precisely what we wanted to prove.

\printbibliography

\end{document}